\documentclass[a4paper,10pt]{amsart}
\usepackage{amsmath,amsthm,amssymb,latexsym,enumerate,xcolor,hyperref}
\usepackage{graphicx}
\usepackage{esint}
\usepackage{comment}
\usepackage{eucal}
\usepackage{tikz}

\numberwithin{equation}{section}

\begin{document}

\newtheorem{thm}{Theorem}[section]
\newtheorem{prop}[thm]{Proposition}
\newtheorem{lem}[thm]{Lemma}
\newtheorem{cor}[thm]{Corollary}
\newtheorem{rem}[thm]{Remark}
\newtheorem{ex}[thm]{Example}
\newtheorem*{defn}{Definition}

\newcommand{\DD}{\mathbb{D}}
\newcommand{\NN}{\mathbb{N}}
\newcommand{\ZZ}{\mathbb{Z}}
\newcommand{\QQ}{\mathbb{Q}}
\newcommand{\RR}{\mathbb{R}}
\newcommand{\CC}{\mathbb{C}}
\renewcommand{\SS}{\mathbb{S}}

\renewcommand{\theequation}{\arabic{section}.\arabic{equation}}

\newcommand\ddfrac[2]{\frac{\displaystyle #1}{\displaystyle #2}}

\newcommand{\supp}{\mathop{\mathrm{supp}}}    

\newcommand{\cb}{\color{blue}}   
\newcommand{\cre}{\color{red}}   
\newcommand{\re}{\mathop{\mathrm{Re}}}   
\newcommand{\im}{\mathop{\mathrm{Im}}}   
\newcommand{\dist}{\mathop{\mathrm{dist}}}  
\newcommand{\link}{\mathop{\circ\kern-.35em -}}
\newcommand{\spn}{\mathop{\mathrm{span}}}   
\newcommand{\ind}{\mathop{\mathrm{ind}}}   
\newcommand{\rank}{\mathop{\mathrm{rank}}}   
\newcommand{\Fix}{\mathop{\mathrm{Fix}}}   
\newcommand{\codim}{\mathop{\mathrm{codim}}}   
\newcommand{\conv}{\mathop{\mathrm{conv}}}   
\newcommand{\osc}{\mathop{\mathrm{osc}}}  
\newcommand{\epsi}{\mbox{$\varepsilon$}}
\newcommand{\eps}{\mathchoice{\epsi}{\epsi}
{\mbox{\scriptsize\epsi}}{\mbox{\tiny\epsi}}}
\newcommand{\cl}{\overline}
\newcommand{\pa}{\partial}
\newcommand{\ve}{\varepsilon}
\newcommand{\zi}{\zeta}
\newcommand{\Si}{\Sigma}

\newcommand{\cC}{\mathcal{C}}
\newcommand{\cD}{{\mathcal D}}
\newcommand{\cE}{{\mathcal E}}
\newcommand{\cF}{{\mathcal F}}
\newcommand{\cG}{{\mathcal G}}
\newcommand{\cH}{{\mathcal H}}
\newcommand{\cI}{{\mathcal I}}
\newcommand{\cJ}{{\mathcal J}}
\newcommand{\cK}{{\mathcal K}}
\newcommand{\cL}{{\mathcal L}}
\newcommand{\cM}{\mathcal{M}}
\newcommand{\cN}{{\mathcal N}}
\newcommand{\cP}{\mathcal{P}}
\newcommand{\cR}{{\mathcal R}}
\newcommand{\cS}{{\mathcal S}}
\newcommand{\cT}{{\mathcal T}}
\newcommand{\cU}{{\mathcal U}}
\newcommand{\cV}{\mathcal{V}}
\newcommand{\cX}{\mathcal{X}}

\newcommand{\cOS}{\overset{o}{\cS}}

\newcommand{\B}{\bullet}
\newcommand{\ol}{\overline}
\newcommand{\ul}{\underline}
\newcommand{\vp}{\varphi}
\newcommand{\AC}{\mathop{\mathrm{AC}}}   
\newcommand{\Lip}{\mathop{\mathrm{Lip}}}   
\newcommand{\es}{\mathop{\mathrm{esssup}}}   
\newcommand{\les}{\mathop{\mathrm{les}}}   
\newcommand{\nid}{\noindent}
\newcommand{\pzr}{\phi^0_R}
\newcommand{\pir}{\phi^\infty_R}
\newcommand{\psr}{\phi^*_R}
\newcommand{\pow}{\frac{N}{N-1}}
\newcommand{\ncl}{\mathop{\mathrm{nc-lim}}}   
\newcommand{\nvl}{\mathop{\mathrm{nv-lim}}}  
\newcommand{\la}{\lambda}
\newcommand{\La}{\Lambda}    
\newcommand{\de}{\delta}    
\newcommand{\fhi}{\varphi} 
\newcommand{\ga}{\gamma}    
\newcommand{\ka}{\kappa}   
\newcommand{\Te}{\Theta}   

\newcommand{\core}{\heartsuit}
\newcommand{\diam}{\mathrm{diam}}
\newcommand{\loc} {\mathrm{loc}}
\newcommand{\reg} {\mathrm{reg}}
\newcommand{\sing} {\mathrm{sing}}

\newcommand{\lan}{\langle}
\newcommand{\ran}{\rangle}
\newcommand{\tr}{\mathop{\mathrm{tr}}}
\newcommand{\diag}{\mathop{\mathrm{diag}}}
\newcommand{\dv}{\mathop{\mathrm{div}}}
\newcommand{\one}{\mathcal{X}}

\newcommand{\al}{\alpha}
\newcommand{\be}{\beta}
\newcommand{\Om}{\Omega}
\newcommand{\na}{\nabla}

\newcommand{\nr}{\Vert}
\newcommand{\De}{\Delta}
\newcommand{\om}{\omega}
\newcommand{\si}{\sigma}
\newcommand{\te}{\theta}
\newcommand{\Ga}{\Gamma}


\title[Short-time with non-constant boundary values]{Short-time behavior and invariant surfaces \\ for caloric functions with \\
non-constant boundary values}

\author[D. Berti]{Diego Berti}
\address{Dipartimento di Matematica ``G. Peano'',
Universit\` a di Torino, Torino (Italy).}
    \email{diego.berti@unito.it}

\author[R. Magnanini]{Rolando Magnanini}
\address{Dipartimento di Matematica ed Informatica ``U.~Dini'',
Universit\` a di Firenze, Firenze (Italy).}
    \email{rolando.magnanini@unifi.it}
    \urladdr{https://people.dimai.unifi.it/magnanini/}

\author[M. Marini]{Michele Marini}
\address{Dipartimento di Ingegneria (DING), Universit\`a del Sannio, Benevento (Italy).}
    \email{mmarini@unisannio.it}


\begin{abstract}
{
We consider a Cauchy-Dirichlet problem for a heat equation with variable coefficients 
in non-divergence form. The initial values are assumed to be homogeneous, while the Dirichlet boundary values are non-constant. In this setting, we derive an asymptotic formula for the short-time behavior of the solution, which extends the celebrated Varadhan formula. We stress the fact that the boundary values are allowed to vanish or change sign. 
\par
The formula is obtained by combining the original ideas of Varadhan with those of Evans and Ishii, pertaining to the theory of viscosity solutions, and some further remarks. In passing, we prove the elliptic counterpart of the formula. This concerns the slow-diffusion behavior of the {solutions} of the resolvent equation. 
\par
Furthermore, for the case of the classical Laplace operator, we prove asymptotic formulas for the heat content and the mean value of the solution of the resolvent equation on spheres touching the boundary of the domain. These extend to the case of non-constant boundary values, certain formulas previously obtained by the second author and S. Sakaguchi. The formulas involve the principal curvatures at the touching point and the Dirichlet boundary values.
\par
We then use our formulas to describe time-invariant surfaces for the heat equation, in presence of non-constant Dirichlet boundary values. We give two
symmetry results in case of non-negative Dirichlet boundary values and a non-existence result, when those values change sign.  
}
\end{abstract}

\keywords{Heat equation, non-constant coefficients, Varadhan's formula, time-invariant surfaces, rigidity results}
    \subjclass[2010]{{Primary 35K20, 35K05, 35J25; Secondary 35B40, 35B06, 35J05}}

\maketitle



\section{Introduction}

In this paper, we 
prove some new asymptotic results related to the short-time behavior of solutions of certain parabolic equations. These results extend two classical formulas, often named \textit{Varadhan's formulas} \cite{Va}, and a formula, obtained in \cite{MS1} by S. Sakaguchi and the second author, which has to do with the heat content of spheres. These formulas will be used to  obtain rigidity results of the so-called time-invariant surfaces for certain parabolic Cauchy-Dirichlet problems.

\subsection{Varadhan's formulas}

The \textit{Varadhan formulas} are now a classical result (see \cite{Va}). The first formula gives the asymptotic behavior for small times of the heat kernel $\Phi(x,y,t)$ of the quite general parabolic equation 
\begin{equation*}
u_t-\tr \bigl[A(x)\,\na^2 u\bigr]=0 \ \mbox{ in } \ \RR^N\times(0,\infty).
\end{equation*}
Here and in the sequel, $N\ge 2$, the symmetric $N\times N$ matrix of functions $A(x)=\{ A_{ij}(x)\}_{i, j=1,\dots, N}$ satisfies a uniform ellipticity condition, $\na^2 u$ denotes the Hessian matrix of $u$, and $\tr$ is the usual trace operator acting on matrices.
\par
The second formula concerns the slow-diffusion behavior in a domain $\Om$ of the solutions $U_\ve=U_\ve(x)$ of the $1$-parameter family of Dirichlet problems
\begin{equation*}
-\ve^2\tr \bigl[A(x)\,\na^2 U_\ve\bigr] +U_\ve=0 \ \mbox{ in } \ \Om, \quad 
U_\ve=1 \ \mbox{ on} \ \pa\Om,
\end{equation*}
for $\ve>0$.
Here, $\Om$ is a bounded domain in $\RR^N$, $N\ge 2$, with boundary $\pa\Om$. The precise structure and regularity of $A(x)$ and $\Om$ will be listed in Section \ref{sec:assumptions}.\footnote{Note that, for the purposes of this paper, we prefer to adopt slightly different notations from those used in \cite{Va}.}
\par
The two asymptotic formulas relate the specified solutions to some relevant distance functions. In fact, one obtains the formulas:
\begin{equation*}
\lim_{t\to 0^+} \left[-4t \log \Phi(x,y,t)\right]=d(x,y)^2, \ x, y\in\RR^N,
\end{equation*}
and 
\begin{equation}
\label{varadhan-elliptic}
\lim_{\ve\to 0^+} \bigl[-\ve \log U_\ve(x)\bigr]=d_{\pa\Om}(x), \ x\in\ol{\Om}.
\end{equation}
Here,  $d(x,y)$ is the length of the shortest path joining $x$ to $y$ in a Riemannian metric derived from the matrix $A(x)$ and $d_{\pa\Om}(x)$ is the minimum of $d(x,y)$ for $y\in\pa\Om$. (The precise definitions for these distances are listed in Section \ref{sec:assumptions}.) The formula for the heat kernel has been recently used in \cite{HL}, in connection with Gel'fand's inverse problem for Riemannian manifolds. 
\par
Also, with a little more effort, from the results in \cite{Va}, it is not difficult to deduce a formula for the solution $u=u(x,t)$ of the Cauchy-Dirichlet problem:
\begin{eqnarray*}
&&u_t-\tr \bigl[A(x)\,\na^2 u\bigr]=0 \ \mbox{ in } \ \Om\times (0,\infty),  \nonumber \\
&&u=0 \ \mbox{ on } \ \Om\times \{ 0\}, \\
&&u=1 \ \mbox{ on } \ \pa\Om\times (0,\infty).\nonumber
\end{eqnarray*}
In fact, $u$ and $U_\ve$ are related by the modified Laplace transformation:
\begin{equation}
\label{laplace}
U_\ve(x)=\ve^{-2} \int_0^\infty e^{-t/\ve^2} u(x,t)\,dt, \ \ \ve>0.
\end{equation}
The asymptotic formula for $u$ thus reads as
\begin{equation}
\label{varadhan}
\lim_{t\to 0^+} \bigl[-4t \log u(x,t)\bigr]=d_{\pa\Om}(x)^2, \ x\in\ol{\Om},
\end{equation}
and can be derived from the formula for $U_\ve$, via \cite[Theorem 4.7]{Va} (which we recall in Lemma \ref{lem:laplace-transform} below).
\par
Besides Varadhan's original proofs in \cite{Va}, based on the construction of suitable barriers, alternative proofs of those formulas were obtained by Evans and Ishii in \cite{EI}, by means of the theory of viscosity solutions and in relation with the works of
Fleming \cite{Fl1, Fl2} on optimal stochastic control. Applications to random perturbations of dynamical systems can be found in the book \cite{FW} of Freidlin and Wentzell. The interested reader is also referred to the works \cite{MS2,MS4}, \cite{BM1,BM2,BM3}, for extensions to various nonlinear regimes, involving the porous medium equation, the evolution $p$-Laplace equation, certain fast-diffusion equations, the game-theoretic $p$-Laplace equations, or the Pucci operators. In \cite{BM1,BM2,BM3}, in the relevant settings, it is also specified the rates of convergence, as $O(\ve \log \ve)$ for $\ve\to 0^+$ or $O(t \log t)$ for $t\to 0^+$.

\subsection{Time-invariant and other invariant surfaces}
Some of the mentioned results were instrumental to prove, in the relevant settings, certain rigidity results for the so-called time-invariant surfaces. If $u=u(x,t)$ is a solution of some evolution equation in a domain $\Om$, a level surface $\Ga\subset\Om$ of $u$ is called \textit{time-invariant} if 
\begin{equation}
\label{time-invariant}
u(x,t)=c(t) \ \mbox{ for } \ (x,t)\in \Ga\times (a,b),
\end{equation}
for some function $c:(a,b)\to\RR$ of the time $t$. It is worthwhile to notice in passing that, thanks to \eqref{laplace}, a time-invariant surface for $u$ is also, so as to speak, a \textit{diffusion-invariant} surface for $U_\ve$, when $a=0$ and $b=\infty$. 
\par
Time-invariance introduces a strong {overdetermination} on the relevant solution. We refer the reader to \cite{MM2} for a (local) characterization of solutions of linear and nonlinear parabolic equations (even of Finsler type), whose level surfaces are \textit{all} time-invariat --- the so-called \textit{Matzoh Ball Soup Problem} introduced in \cite{Kl}, \cite{Za}. 
\par
When we consider certain Cauchy-Dirichlet problems,
a single time-invariant surface 
is generally proved to have some symmetry. In the relevant metrics, {it has been shown} to be
a sphere (in bounded regimes), or  a hyperplane, an infinite cylinder, or even a  right helicoid (in unbounded regimes). We refer the interested reader to a long list of papers on this issue: \cite{MPrS,MPeS}, \cite{MS1}-\cite{MS5}, \cite{MM1}, which entails linear and nonlinear parabolic equations, even in anisotropic frameworks.

\par
One of the key points of the strategy to prove such rigidity results is to observe that, if a Varadhan-type formula is in force for the relevant boundary value problem, then a time-invariant surface is also a level surface of some distance function, generally the distance from the boundary $d_{\pa\Om}$ (see Section \ref{sec:assumptions}).   In this case $\Ga$ would be \textit{parallel} to $\pa\Om$. This information is sometimes sufficient to prove spherical symmetry in bounded regimes, by means of Serrin's \textit{method of moving planes} (\cite{MS4}).  
Alternatively, another method of proving symmetry is to show that the principal curvatures of time-invariant surfaces satisfy some constraint. In fact, one proves that they have a \textit{constant mean curvature} or, more generally, they are \textit{Wirtinger's surfaces}. {Symmetry} is then gained by applying \textit{Alexandrov's reflection principle} (\cite{MS1}), or yet \textit{$P$-function} techniques, for bounded regimes, or the \textit{sliding method}, for unbounded regimes (\cite{MS3,MS5}). 
\par

\subsection{Non-constant initial or boundary values}
It is natural to consider 
a more general Cauchy-Dirichlet problem, in which the initial and Dirichlet boundary values are allowed to be non-constant. In other words, we examine the problem:
\begin{eqnarray}
\label{cauchy-dirichlet}
&&u_t-\tr \bigl[A(x)\,\na^2 u\bigr]=0 \ \mbox{ in } \ \Om\times (0,\infty),  \nonumber \\
&&u=u_0 \ \mbox{ on } \ \Om\times \{ 0\}, \\
&&u=f \ \mbox{ on } \ \pa\Om\times (0,\infty), \nonumber
\end{eqnarray}
where $u_0\in L^2(\Om)$ and $f\in C(\pa\Om)$ are given non-constant functions.
\par
It is not difficult to check that the functions $U_\ve$ defined by \eqref{laplace}
satisfy the Dirichlet problem:
\begin{equation}
\label{resolvent-dirichlet}
\ve^2\tr \bigl[A(x)\,\na^2 U_\ve\bigr] -U_\ve=u_0 \ \mbox{ in } \ \Om, \quad 
U_\ve=f \ \mbox{ on } \ \pa\Om, 
\end{equation}
for fixed $\ve>0$.
Also in this case, time-invariant and diffusion-invariant surfaces coincide.
\par
The slow diffusion behavior for problem \eqref{resolvent-dirichlet} has been recently considered in a couple of interesting papers related to applications to image processing, computer vision, and manufacturability,  (\cite{HMOSY}, \cite{HMTY}). In fact,  for the case in which the relevant elliptic operator is the classical Laplacian (i.e. $A(x)=I$), in \cite{HMOSY} it is assumed that $u_0$ and $f$ are non-negative and $u_0$ is allowed to be zero only on an open subset $E$ with $\ol{E}\subset\Om$ (and finitely many connected components). Under suitable regularity assumptions on $E$ and $f$ (see \cite[Section 1.2]{HMOSY}), it is then proved that
\begin{equation*}
\lim_{\ve\to 0^+} \bigl[-\ve \log U_\ve(x)\bigr]=d_{\pa E}(x) \ \mbox{ for } \ x\in\ol{E},
\end{equation*}
and the limit is uniform. 
Moreover, we notice here that, for the solution $u$ of \eqref{cauchy-dirichlet}, it is also possible to use Lemma \ref{lem:laplace-transform} and deduce a formula analogous to \eqref{varadhan}:
\begin{equation*}
\lim_{t\to 0^+} \bigl[-4t \log u(x,t)\bigr]=d_{\pa E}(x)^2, \ x\in\ol{E}.
\end{equation*}
It is interesting to point out that these formulas do not depend on the boundary values $f$, but only on the set of positivity of $u_0$.
Also note, in passing, that the last formula informs us that a possible time-invariant surface contained in $E$ must be parallel to $\pa E$.

If the set $E$ extends to the boundary, then the Dirichlet data comes into play, as is clear from the classical Varadhan's asymptotics. In this paper, we shall address the case in which $u_0\equiv 0$, i.e. the core set $E$ is exactly $\Om$, and $f$ is non-constant and is allowed to vanish or even change sign on $\pa\Om$. We shall treat this case for a general matrix of coefficients $A(x)$.
\par
It should be noticed that the extension of Varadhan's formulas is straightforward when we suppose that $a_-\le f\le a^+$ on $\pa\Om$, for some positive constants $a^-, a^+$. This is the content of Proposition \ref{prop:varadhan-easy-extension} and its proof  is an adaptation of an argument contained in \cite[Theorem 3.7]{MS4}. Instead, the situation becomes more intricate when $f$ is allowed to vanish or change sign. The discussion of {these cases} is one of the highlights of this paper  and will be discussed in Section \ref{sec:non-constant}.
\par
A first result is encoded in the following theorem and concerns the case of nonnegative boundary values.

\begin{thm}[Extended Varadhan's formulas]
\label{thm:varadhan-elliptic-parabolic}
{Let $\Om \subset \mathbb R^N$ be a bounded domain} with boundary $\pa\Om$ of class $C^{1,1}$.
Choose $u_0\equiv 0$, let $f\in C(\pa\Om)$ be non-negative (not identically zero), and set 
$$
\cS=\{x\in\pa\Om: f(x)>0\}.
$$
\par
Let $u=u(x,t)$ and $U_\ve=U_\ve(x)$ be the solutions of the problems \eqref{cauchy-dirichlet}  and \eqref{resolvent-dirichlet}.
Then, it holds that
\begin{equation}
\label{varadhan-non-negative}
\lim_{t\to 0^+} \bigl[-4t \log u(x,t)\bigr]=d_{\cS}(x)^2 
\end{equation}
and
\begin{equation}
\label{varadhan-non-negative-elliptic}
\lim_{\ve\to 0^+} \bigl[-\ve \log U_\ve(x)\bigr]=d_{\cS}(x),
\end{equation}
for $x\in \Om\cup\cS$.
Moreover, the limits hold uniformly on compact subsets of $\Om\cup\cS$.
\end{thm}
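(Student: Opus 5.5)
We prove the elliptic formula \eqref{varadhan-non-negative-elliptic} first and then deduce \eqref{varadhan-non-negative} from it through the Laplace transform \eqref{laplace} and Lemma \ref{lem:laplace-transform}. By the maximum principle, $0\le U_\ve\le \max f$ and $0\le u\le\max f$. Since $\log$ is monotone, the task reduces to two bounds:
$$
\limsup_{\ve\to0^+}\bigl[-\ve\log U_\ve\bigr]\le d_\cS \quad\text{and}\quad \liminf_{\ve\to0^+}\bigl[-\ve\log U_\ve\bigr]\ge d_\cS,
$$
locally uniformly on $\Om\cup\cS$.

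\emph{Lower bound.} This will come from a comparison from above. Fix $\de>0$ and the closed set $K_\de=\{y\in\pa\Om: f(y)\le \de\}$. We build a supersolution equal to $\max f$ on a neighborhood of $\pa\Om\setminus K_\de$ and to $\de$ on $K_\de$. Write $U_\ve\le V_\ve+W_\ve$, where $V_\ve$ solves the problem with boundary value $\de$, so that $V_\ve\le\de$. The function $W_\ve$ solves the problem with a continuous boundary value $g\ge0$, bounded by $\max f$ and supported in $\{f\ge\de/2\}$. The set $\{f\ge\de/2\}$ is compactly contained in $\cS$, and its distance satisfies $d_{\{f\ge\de/2\}}\ge d_\cS$. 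Let $\cS_\de$ be a relatively open smooth neighbourhood of the support of $g$ inside $\cS$. Then $W_\ve\le \max f\,Z_\ve$, where $Z_\ve$ solves the problem with boundary value $1$ on $\cS_\de$ and $0$ elsewhere. For $Z_\ve$, a Varadhan-type upper estimate $Z_\ve(x)\le C\,e^{-(d_{\cS_\de}(x)-\eta)/\ve}$ follows from the Evans–Ishii approach. The functions $w_\ve=-\ve\log Z_\ve$ satisfy
$$
\ve\,\tr[A\na^2 w_\ve]-\lan A\na w_\ve,\na w_\ve\ran+1=0,
$$
and their half-relaxed lower limit is a viscosity supersolution of $\lan A\na w,\na w\ran=1$, which is $\ge 0$ on $\cS_\de$ and equal to $+\infty$ (in the relaxed sense) on $\pa\Om\setminus\ol{\cS_\de}$. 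The comparison principle for the eikonal equation, with $d_{\cS_\de}$ as the maximal subsolution vanishing on $\cS_\de$, then gives $\liminf w_\ve\ge d_{\cS_\de}$. The delicate point is the boundary $\pa\Om\setminus\cS_\de$, where the datum is $0$. There we would use barriers $e^{-M\,\mathrm{dist}/\ve}$ built with the $C^{1,1}$ (uniform exterior ball) condition to show that $w_\ve\to+\infty$ near such points, uniformly away from $\ol{\cS_\de}$. Combining, $U_\ve\le \de+Ce^{-(d_\cS-\eta)/\ve}$. Because $\de>0$ is fixed, this alone yields the bound only where $d_\cS<\infty$ effectively, so we refine it: replace $\de$ by $\de_\ve=e^{-M/\ve}$ with $M>\max d_\cS$. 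This replacement is legitimate because the part of $f$ below $\de$ can be dominated by $\de\, Z'_\ve$, where $Z'_\ve$ solves the problem with datum $1$ on $\pa\Om$. Iterating gives
$$
U_\ve\le \de\, e^{-(d_{\pa\Om}-\eta)/\ve}+Ce^{-(d_{\cS}-\eta)/\ve}.
$$
Since $d_{\pa\Om}\le d_\cS$ this is still not enough, so the cleanest route is a single viscosity argument. The half-relaxed limit $\ul w=\liminf_* w_\ve$ is a supersolution in $\Om$. By local barriers, $\ul w\ge0$ everywhere and $\ul w=0$ only at points of $\ol\cS$; on $\pa\Om\setminus\ol\cS$ the generalized boundary condition (supersolution or $\ul w\ge$ data in the viscosity sense) holds with datum $+\infty$. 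A comparison principle for the eikonal Dirichlet problem with datum $0$ on $\cS$ and $+\infty$ on the rest, in the Evans–Ishii style, then gives $\ul w\ge d_\cS$. \textbf{This boundary analysis at $\pa\Om\setminus\cS$ is the main obstacle}, since $f$ may vanish on a large set and the datum degenerates there.

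\emph{Upper bound.} Fix $x_0\in\cS$ and $\de>0$ with $f\ge 2\de$ on $B_r(x_0)\cap\pa\Om$. Construct a subsolution $\de\,\phi_\ve$, where $\phi_\ve$ solves the problem with datum a cutoff $\chi\le1$ equal to $1$ on $B_{r/2}(x_0)\cap\pa\Om$. By the maximum principle $U_\ve\ge\de\phi_\ve$. For $\phi_\ve$, the upper half-relaxed limit of $-\ve\log\phi_\ve$ is a subsolution, and comparison against the distance to $B_{r/2}(x_0)\cap\pa\Om$ (equivalently, Varadhan's original barriers near a $C^{1,1}$ boundary piece, as in Proposition \ref{prop:varadhan-easy-extension}) gives $\limsup\bigl[-\ve\log U_\ve\bigr]\le d_{B_{r/2}(x_0)\cap\pa\Om}$. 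Taking the infimum over $x_0\in\cS$ and $r\to0$, and using that $d_\cS=\inf_{x_0\in\cS} d(\cdot,x_0)$ with upper semicontinuity of the limsup, yields $\limsup\le d_\cS$. Uniformity on compacts of $\Om\cup\cS$ follows from the local uniform convergence in the half-relaxed limit method once the upper and lower limits coincide with the continuous $d_\cS$.

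\emph{Parabolic formula.} The parabolic formula \eqref{varadhan-non-negative} follows from the elliptic one applied to \eqref{laplace} via Lemma \ref{lem:laplace-transform}. That lemma needs $u(x,\cdot)$ to be bounded with monotone-type control. Here $0\le u\le\max f$, and the comparison $u\ge \de\,\tilde u$, with $\tilde u$ the solution for the cutoff datum, supplies the required structure. If monotonicity in $t$ is needed, we sandwich $u$ between solutions with data $\max f\cdot\one_{\text{nbhd of }\cS}$-type cutoffs, whose solutions are increasing in $t$ because $u_0=0$ is a subsolution.
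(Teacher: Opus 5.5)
\textbf{Genuine gap: the lower bound $\liminf_{\ve\to0^+}[-\ve\log U_\ve]\ge d_\cS$ is never established.} This inequality is the exponential upper bound on $U_\ve$, and it is the heart of the theorem.

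Your first attempt fails at the supposedly legitimate replacement of $\de$ by $e^{-M/\ve}$. The part of $f$ below $\de$ is only dominated by $\de$ times the solution with datum $1$. That solution behaves like $\de\,e^{-d_{\pa\Om}/\ve}$, not like $e^{-M/\ve}$, as you then notice yourself.

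Your final ``single viscosity argument'' rests on a comparison principle for the eikonal equation with datum $0$ on $\cS$ and $+\infty$ on $\pa\Om\setminus\cS$. You neither formulate it precisely nor prove it, and you yourself flag it as the main obstacle. Barriers also cannot make $w_\ve\to+\infty$ at interior points near $\pa\Om\setminus\ol{\cS}$: there $w_\ve(x)$ tends to the finite value $d_\cS(x)$. So the half-relaxed lower limit is finite at such boundary points. The ``$+\infty$ datum'' can then only survive as a state-constraint-type viscosity inequality. That inequality would still have to be derived and then fed into a comparison argument that also copes with the junction $\ol{\cS}\setminus\cS$.

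\textbf{The missing idea: compare at fixed $\ve$, not in the limit.} Then the degenerate datum is harmless. Since $f\le\ol f\,\one_\cS$ and $f=0$ on $\pa\Om\setminus\cS$, take any $\phi$ smooth on $\ol\Om$ with $\phi\le C$ on $\cS$ and $\lan A\na\phi,\na\phi\ran-\ve\,\tr[A\na^2\phi]\le1$ in $\Om$. Then $\ol f\,e^{(C-\phi)/\ve}$ is a supersolution of \eqref{resolvent-dirichlet} lying above $f$ on all of $\pa\Om$. Nothing is required on $\pa\Om\setminus\cS$.

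The paper produces such (approximate) subsolutions in the lemma on regularising $d_\cS$. It mollifies at scale $\sqrt\ve<r$ the function $d_{\cS_r}-\frac a4\,d_{\cS_r}^2$, defined on the enlarged set $\Om_r$, with error $O(\sqrt\ve)$. It then compares in Lemma \ref{lem:characteristic-function}(ii) at the level of $V_\ve=-\ve\log U_\ve$. There $V_\ve=+\infty$ on $\pa\Om\setminus\cS$ makes the boundary inequality trivial. Letting $\ve\to0$ and then $a,r\to0$ gives $V\ge d_\cS$. General $f\ge0$ follows from the sandwich $\frac1n\one_{\cS_n}\le f\le\ol f\,\one_\cS$ in Theorem \ref{thm:varadhan-any-positive}.

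\textbf{On the other two parts.} Your localized lower bound for $U_\ve$, via cutoffs around points of $\cS$, is an acceptable variant of the left half of that sandwich. However, taking half-relaxed upper limits of $-\ve\log\phi_\ve$ requires local upper bounds. The paper gets these from the boundary barrier and Bernstein estimate of Lemma \ref{lem:compactness-estimates}.

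No sandwiching is needed in the parabolic step. The function $u$ itself is non-decreasing in $t$: compare $u(\cdot,\cdot+h)$ with $u$, using $u(\cdot,h)\ge0=u_0$. So $F_\be=u(x,\cdot)/\ol f$ is admissible in Lemma \ref{lem:laplace-transform}, exactly as in the paper.
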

Here, $d_\cS$ is a suitable distance of points $x\in\Om$ to the positivity set $\cS$ of $f$. For its precise definition of $d_\cS$, we refer the reader to Section \ref{subsec:distance}. 
It is interesting to observe that, when $f$ is allowed to vanish, but $\ol{\cS}=\pa\Om$,  then \eqref{varadhan-non-negative} and \eqref{varadhan-non-negative-elliptic} still read as the original Varadhan's formulas \eqref{varadhan} and \eqref{varadhan-elliptic}, except at the points at which $f=0$. In fact, at those points, $-4t \log u(x,t)$ and $-\ve \log U_\ve(x)$ equal $+\infty$, for every positive $t$ and $\ve$.
\par
The proof of Theorem \ref{thm:varadhan-elliptic-parabolic} relies on a combination of the techniques used in \cite{Va}, \cite{EI}, and an additional remark. First, for $U_\ve$, we settle the case in which $f$ is the characteristic function of a relatively open subset of $\pa\Om$. For this case, we follow the framework of viscosity solutions contained in \cite{EI}. Secondly, by Theorem \ref{thm:varadhan-any-positive}, we extend the result to a general non-negative continuous function. Thirdly, we use Lemma \ref{lem:laplace-transform} below to obtain the desired result in the parabolic case.

As a straightforward corollary of Theorem \ref{thm:varadhan-elliptic-parabolic}, we can also partially treat the case of sign-changing boundary values. 

\begin{cor}[Sign-changing data]
\label{cor:sign-changing}
Take $f\in C(\pa\Om)$. {Set $u_0 \equiv 0$ and let} $\cS^+$, $\cS^-$, and $\cS$ be the {(non-empty)} sets defined by
\begin{equation}
\label{def-S12}
\cS^+=\{ x\in\pa\Om : f(x)>0\}, \quad \cS^-=\{ x\in\pa\Om : f(x)<0\}, \quad \cS=\cS^+\cup\cS^-.
\end{equation}
\par
Let $u=u(x,t)$ and $U_\ve=U_\ve(x)$ be the solutions of the problems \eqref{cauchy-dirichlet}  and \eqref{resolvent-dirichlet}.
Then, for $x\in \Om\cup\cS$, it holds that
$$
\lim_{t\to 0^+} \bigl[-4t \log |u(x,t)|\bigr]=d_{\cS}(x)^2= 
\begin{cases}
d_{\cS^+}(x)^2 \ &\mbox{ for } \ d_{\cS^+}(x)<d_{\cS^-}(x), \\
d_{\cS^-}(x)^2 \ &\mbox{ for } \ d_{\cS^+}(x)>d_{\cS^-}(x),
\end{cases} 
$$
and
$$
\lim_{\ve\to 0^+} \bigl[-\ve \log |U_\ve(x)|\bigr]= \\ d_{\cS}(x)= 
\begin{cases}
d_{\cS^+}(x) \ &\mbox{ for } \ d_{\cS^+}(x)<d_{\cS^-}(x), \\
d_{\cS^-}(x) \ &\mbox{ for } \ d_{\cS^+}(x)>d_{\cS^-}(x).
\end{cases}
$$
The limits are uniform on the relevant compact subsets of $\Om\cup\cS$. 
\end{cor}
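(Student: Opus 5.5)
The plan is to use linearity to split the data into its positive and negative parts and then apply Theorem \ref{thm:varadhan-elliptic-parabolic} to each part. Write $f=f^+-f^-$, with $f^\pm=\max(\pm f,0)\in C(\pa\Om)$ non-negative. By assumption $\cS^\pm$ is non-empty, so each $f^\pm$ is not identically zero. Let $u^\pm$ and $U^\pm_\ve$ solve \eqref{cauchy-dirichlet} and \eqref{resolvent-dirichlet} with $u_0\equiv0$ and boundary data $f^\pm$. By uniqueness, $u=u^+-u^-$ and $U_\ve=U^+_\ve-U^-_\ve$. Theorem \ref{thm:varadhan-elliptic-parabolic} then gives
$$
-4t\log u^\pm(x,t)\to d_{\cS^\pm}(x)^2 \quad\mbox{and}\quad -\ve\log U^\pm_\ve(x)\to d_{\cS^\pm}(x),
$$
uniformly on compact subsets of $\Om\cup\cS^\pm$.

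Next I identify the distance. From the definition in Section \ref{subsec:distance}, the distance to a union is the minimum of the distances, so $d_\cS=\min(d_{\cS^+},d_{\cS^-})$. This gives the two displayed cases.

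The main step is the elliptic limit; the parabolic one is identical with $t$ in place of $\ve$. Fix a compact set $K\subset\Om\cup\cS$ on which, say, $d_{\cS^+}<d_{\cS^-}$; this is the meaning of "relevant compact subsets." Then $K\cap\cS^-=\emptyset$, since $d_{\cS^-}=0$ on $\cS^-$, and therefore $K\subset\Om\cup\cS^+$. By continuity and compactness there is $\delta>0$ with $d_{\cS^-}-d_{\cS^+}\ge 2\delta$ on $K$.

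If $K$ meets $\pa\Om$, then the lower bound $-\ve\log U^-_\ve\ge d_{\cS^-}-\delta/2$ is needed near those boundary points, which are not in $\Om\cup\cS^-$. The inequality $\liminf(-\ve\log U^-_\ve)\ge d_{\cS^-}$ is the easy one-sided estimate. It comes from comparison with the solution whose data is the characteristic function of a neighborhood of $\cS^-$, or simply from the supersolution/barrier argument that is part of the proof of the Theorem. I expect it to hold uniformly on all of $\ol\Om$, where we use the convention $\log 0=-\infty$. For $\ve$ small, uniformly on $K$:
$$
U^+_\ve\ge e^{-(d_{\cS^+}+\delta/2)/\ve}, \qquad 0\le U^-_\ve\le e^{-(d_{\cS^-}-\delta/2)/\ve}\le e^{-\delta/\ve}\,e^{-(d_{\cS^+}+\delta/2)/\ve}.
$$
Hence $U_\ve=U^+_\ve\bigl(1-O(e^{-\delta/\ve})\bigr)$, and
$$
-\ve\log|U_\ve|=-\ve\log U^+_\ve+O(\ve e^{-\delta/\ve})\to d_{\cS^+}
$$
uniformly on $K$. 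The region where $d_{\cS^-}<d_{\cS^+}$ is symmetric. Repeating the argument with $u^\pm$ and the factor $e^{-c/t}$ gives the parabolic statement.

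The main obstacle is the justification of the uniform one-sided bound on $U^-_\ve$ (respectively $u^-$) near points of $\cS^+\cap K$, since Theorem \ref{thm:varadhan-elliptic-parabolic} only asserts uniformity on compact subsets of $\Om\cup\cS^-$. I would first try to extract it directly from the upper barrier used in the Theorem's proof. Failing that, I would use the maximum principle: let $\tilde f\ge f^-$ be continuous and positive on a small neighborhood $\cN$ of $\ol{\cS^-}$ that stays away from $K\cap\pa\Om$, and zero elsewhere. Comparison gives $U^-_\ve\le\tilde U_\ve$. Applying the Theorem to $\tilde f$ on compact subsets of $\Om\cup(\pa\Om\setminus\ol{\cN})$, where $\tilde U_\ve$ vanishes on the boundary part and the limit is controlled, should yield the needed bound with $d_{\cN}\ge d_{\cS^-}-\delta/2$.
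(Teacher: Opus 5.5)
Your argument is the paper's: write $f=f^+-f^-$, apply Theorem \ref{thm:varadhan-any-positive} (resp.\ Theorem \ref{thm:non-constant-parabolic}) to each part, and show that the part attached to the farther set is exponentially negligible. Your bookkeeping ($U_\ve=U^+_\ve(1-U^-_\ve/U^+_\ve)$, error $O(\ve e^{-\delta/\ve})$) is cleaner than the printed version. You are also right about uniformity on compact sets $K\subset\Om\cup\cS$ that reach $\cS^+$. There you need a bound on $U^-_\ve$ (and $u^-$) near $K\cap\pa\Om\subset\cS^+$, and these points lie outside $\Om\cup\cS^-$. The paper's proof argues only through the ratio $U^-_\ve/U^+_\ve$ pointwise and does not address this.

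Your fallback does not close this point. Theorem \ref{thm:varadhan-elliptic-parabolic} applied to $\tilde f$ gives uniformity only on compact subsets of $\Om\cup\cN$. But $K\cap\pa\Om\subset\pa\Om\setminus\ol{\cN}$ is exactly where $\tilde f$ vanishes, so the difficulty reappears unchanged.

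Your first option is the one that works, and it is short. Since $\cS^-$ is relatively open and $f^-\le M\one_{\cS^-}$ with $M=\max_{\pa\Om}f^-$, comparison gives $U^-_\ve\le M\,\ol{U}_\ve$, where $\ol{U}_\ve$ has data $\one_{\cS^-}$. In step (ii) of the proof of Lemma \ref{lem:characteristic-function}, the inequality $-\ve\log\ol{U}_\ve\ge d^\ve_a-C^*r-O(\sqrt{\ve})/a$ is obtained by comparison on all of $\ol{\Om}$. Choosing first $a,r$ small and then $\ve$ small therefore gives $-\ve\log U^-_\ve\ge d_{\cS^-}-\delta/2$ uniformly on $\ol{\Om}$. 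Only this one-sided bound can be global, since $-\ve\log U^-_\ve=+\infty$ on $\pa\Om\setminus\cS^-$.

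For the parabolic case no new barrier is needed. Since $u^-$ is non-decreasing in $t$, \eqref{laplace} gives $u^-(x,t)\le e^{t/\ve^2}U^-_\ve(x)$. The choice $\ve=2t/(d_{\cS^-}(x)-\delta/2)$ then yields $-4t\log u^-(x,t)\ge (d_{\cS^-}(x)-\delta/2)^2-4t\log M$. This holds uniformly on $K$ for small $t$, because $d_{\cS^-}\ge 2\delta$ there.
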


The case in which $d_{\cS^+}(x)=d_{\cS^-}(x)$ is more delicate and, besides the sets $\cS^+$ and $\cS^-$, also the actual values of $f$ come into play. We refer the reader to Example \ref{ex:fat-G} and also Remark \ref{rem:threshold}, for a brief discussion.

\subsection{On the heat content of spheres
}
When $A(x)=I$ and $f$ is constant, some of the rigidity results for time-invariant surfaces of caloric functions 
relied on asymptotic formulas relating the principal curvatures of $\pa\Om$ to the heat content of (or the mean value of $U_\ve$ on) spheres or balls touching the boundary. (See \cite[Theorem 2.3]{MS1} and \cite[Theorem 1.3]{MS2}.)
\par
For the purposes of Section \ref{sec:invariant-surfaces}, discussed in the next subsection, when $f$ is not constant, those formulas need to be updated. In what follows, the functions $\ka_1,\dots, \ka_{N-1}: \pa\Om\to\RR$ will denote the {principal curvatures} of $\pa\Om$, with respect to the interior normal $\nu$ at points in $\pa\Om$.

\begin{thm}[Asymptotics for the heat content]
\label{thm:mean-value-asymptotics}
Let {$\Om\subset \mathbb R^N$} be a bounded domain and fix $q\in\pa\Om$. Suppose that $\pa\Om$ is of class $C^2$ in a neighborhood of $q$. Let $B_R(p)\subset\Om$ be a ball such that
$\pa B_R(p)\cap\pa\Om=\{q\}$ and  
suppose that $\ka_j(q)<1/R$, for $j=1,\dots, N-1$.
\par
Let $f \in C(\pa\Om)$ be non-negative 
and let $u=u(x,t)$ and $U_\ve=U_\ve(x)$ be the solutions of the problems
\begin{equation}
\label{heat-cauchy-dirichlet}
u_t=\De u \ \mbox{ in } \ \Om\times(0,\infty), \ \ u=0  \ \mbox{ on } \ \Om\times\{ 0\}, \ \
 u=f \ \mbox{ on } \ \pa\Om\times(0,\infty),
\end{equation}
and
\begin{equation}
\label{heat-resolvent-dirichlet}
\ve^2 \De U_\ve-U_\ve=0 \ \mbox{ in } \ \Om,  \quad
 U_\ve=f \ \mbox{ on } \ \pa\Om.
\end{equation}
\par
Then, the following formulas hold:
\begin{equation}
\label{MS-f-non-constant-heat}
\lim_{\ve\to 0^+}(4\pi^2 t)^{-\frac{N-1}{4}}\int_{\pa B_R(p)} u(x,t)\,dS_x=\frac{f(q)/\Ga\left(\frac{N+3}{4}\right)}{ \sqrt{\prod\limits_{i=1}^{N-1}\bigl[1/R-\ka_i(q)\bigr]}};
\end{equation}
\begin{equation}
\label{MS-f-non-constant}
\lim_{\ve\to 0^+}(2\pi \ve)^{-\frac{N-1}{2}}\int_{\pa B_R(p)} U_\ve(x)\,dS_x=\frac{f(q)}{ \sqrt{\prod\limits_{i=1}^{N-1}\bigl[1/R-\ka_i(q)\bigr]}}.
\end{equation}
\end{thm}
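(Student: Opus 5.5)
Since the surface integrals are linear in $f$, the plan is to localize near $q$ and reduce to the constant-data result of Magnanini--Sakaguchi (\cite[Theorem 2.3]{MS1} and \cite[Theorem 1.3]{MS2}). These give \eqref{MS-f-non-constant-heat} and \eqref{MS-f-non-constant} when $f\equiv 1$. (The limit in \eqref{MS-f-non-constant-heat} is clearly meant as $t\to 0^+$.) We prove the elliptic formula \eqref{MS-f-non-constant} first and then get the parabolic one from it. Either we repeat the localization directly for $u$, or, for the parabolic formula, we apply a Tauberian/Laplace-transform argument in the spirit of Lemma \ref{lem:laplace-transform}, using \eqref{laplace} integrated over $\pa B_R(p)$. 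The constant $\Ga\left(\frac{N+3}{4}\right)$ is exactly what the Laplace transform of $t^{(N-1)/4}$ produces: $\ve^{-2}\int_0^\infty e^{-t/\ve^2}t^{\frac{N-1}{4}}dt=\Ga\left(\frac{N+3}{4}\right)\ve^{\frac{N-1}{2}}$. Since $u\ge 0$ and $u$ is monotone in $t$ (because $f\ge0$ and $u_0=0$), a Karamata-type Tauberian theorem applies. Alternatively, and more safely, we run the same sandwich argument below directly for $u$, using the constant-data parabolic result of \cite{MS1}.

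\emph{Localization.} Fix $\eta>0$. By continuity, choose $\de>0$ so that $|f-f(q)|<\eta$ on $\pa\Om\cap B_\de(q)$. Set $M=\max f$. Take a continuous cutoff $\chi$ on $\pa\Om$ with $0\le\chi\le1$, $\chi=1$ on $B_{\de/2}(q)$, and $\chi=0$ outside $B_\de(q)$. Then
\[
(f(q)-\eta)\,\chi\le f\le (f(q)+\eta)+M(1-\chi)\quad\text{on } \pa\Om.
\]
By the comparison principle, $U_\ve$ is sandwiched between the corresponding solutions. By linearity, the claim reduces to two facts:
\begin{enumerate}[(i)]
\item the solution $W_\ve$ with boundary data $1-\chi$ satisfies $\int_{\pa B_R(p)}W_\ve\,dS=o(\ve^{\frac{N-1}{2}})$;
\item the constant-data formula holds for data $1$.
\end{enumerate}
Indeed, the data $\chi=1-(1-\chi)$ then gives the same limit as $1$. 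Letting $\eta\to0$ concludes.

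\emph{Step (i), the main obstacle.} The support of $1-\chi$ is a closed set $K\subset\pa\Om$ with $q\notin K$. Since $\pa B_R(p)\cap\pa\Om=\{q\}$, we have $d_{K}(x)\ge d_{\pa\Om}(x)+c$ for some $c>0$ on a neighborhood of $q$ in $\pa B_R(p)$. Theorem \ref{thm:varadhan-elliptic-parabolic}, applied with $\cS\supseteq\{1-\chi>0\}$ and holding uniformly on compacts, gives $W_\ve\le e^{-(d_K(x)-\eta')/\ve}$. On the remaining part of the sphere, $d_{\pa\Om}(x)\ge c'>0$, so $W_\ve$ is exponentially small there too. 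Near $q$, the Euclidean distance is used ($A=I$). The available Varadhan-type bound controls $W_\ve$ only by $e^{-(d_K-\eta')/\ve}$, which is not comparable with the main term $\int e^{-d_{\pa\Om}/\ve}$ unless we know $d_K>d_{\pa\Om}$ uniformly. Near $q$ this holds by the gap $c$. Away from $q$, $d_{\pa\Om}$ itself is bounded below, while the main term is of size $\ve^{\frac{N-1}{2}}$, not exponentially small. So in both regions $W_\ve$ is $O(e^{-c''/\ve})=o(\ve^{\frac{N-1}{2}})$. The uniformity of the limit in Theorem \ref{thm:varadhan-elliptic-parabolic} on the compact set $\pa B_R(p)$, which lies inside $\Om\cup\{q\}$, must be checked near $q$. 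There $d_K$ stays positive, so one can instead use a simple barrier, namely a radial supersolution centered away from $K$, valid in a neighborhood of $q$. I expect this uniformity near the touching point to be the delicate technical point. The parabolic analogue uses \eqref{varadhan-non-negative} in the same way.
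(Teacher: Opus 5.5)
Your proposal is correct and follows essentially the paper's proof. Comparison and linearity reduce \eqref{MS-f-non-constant} to the constant-data formula of \cite[Theorem 2.3]{MS1} plus the negligibility of boundary data vanishing near $q$. That negligibility is the paper's Lemma \ref{lem:U-eps-*}, obtained as you suggest from a Varadhan-type bound away from $\pa\Om$ and a radial barrier near $q$. The step you flag as delicate is in fact easy: the solution of $\ve^2\De\fhi-\fhi=0$ in a ball centered at $q$ of radius less than $\dist(q,K)$, with boundary value $1$, already serves as that barrier. Formula \eqref{MS-f-non-constant-heat} then follows from \eqref{laplace} and the Karamata--Feller theorem (Lemma \ref{lem:feller}).

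The one point to add is that this Tauberian theorem needs $f(q)>0$. When $f(q)=0$, use instead the monotonicity of $u$ in $t$, which gives $u(x,t)\le e\,U_{\sqrt{t}}(x)$ and hence the vanishing limit, as the paper does.
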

This theorem will be proved in Section \ref{subsec:heat-content}. It is clear that, when $f=1$, \eqref{MS-f-non-constant} reduces to that in \cite[Theorem 2.3]{MS1} (see also \cite[Theorem 1.3]{MS2}).

\subsection{Rigidity results for time-invariant surfaces}
We will conclude our investigations with Section \ref{sec:invariant-surfaces}.
There, we analyse how the presence of non-constant Dirichlet boundary values affects some of the rigidity results already obtained in the literature for the constant regime. 
\par 
In fact, in Section \ref{sec:invariant-surfaces}, 
we shall consider the solution 
$u=u(x,t)$ of the general Cauchy-Dirichlet problem \eqref{cauchy-dirichlet} for the heat equation (i.e. when $A(x)=I$):
\begin{equation}
\label{cauchy-dirichlet-gen}
u_t=\De u \ \mbox{ in } \ \Om\times(0,\infty), \ u=u_0  \ \mbox{ on } \ \Om\times\{ 0\}, \
 u=f \ \mbox{ on } \ \pa\Om\times(0,\infty),
\end{equation}
where $u_0\in L^2(\Om)$ and $f\in C(\pa\Om)$.
The following theorem, which extends to the new settings the symmetry results previously obtained in \cite{MS1, MS2},  tells us that the presence of a time-invariant surface \textit{for all times} is a quite strong requirement for a solution of the heat equation.

\begin{thm}[Symmetry for general boundary values]
\label{thm:rigidity-definitive}
 Let {$\Om\subset \mathbb R^N$} be a bounded domain satisfying the exterior sphere condition.
Suppose that $f \in C(\pa\Om)$, $f\not\equiv 0$,
and let $u$ be the solution of \eqref{heat-cauchy-dirichlet}.
\par
Let $D\subset\Om$ be a domain such that $\pa D$ satisfies the interior cone condition and either $\ol{D}\subset\Om$ or $\pa D$ is connected and
$\pa D\cap\Om\ne\varnothing$. If, for $\Ga=\pa D\cap\Om$, the condition
\begin{equation*}
u(x,t)=c(t) \ \mbox{ for } \ (x,t)\in \Ga\times (0,\infty)
\end{equation*}
holds for some function $c:(0,\infty)\to \RR$, then 
$D$ and $\Om$ must be concentric balls.
\end{thm}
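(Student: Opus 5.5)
Since $\Gamma$ is time-invariant for all $t\in(0,\infty)$, the natural first step is to exploit the large-time and the resolvent information rather than short-time asymptotics (which only see $d_\cS$ and depend on the sign structure of $f$). I would pass to the Laplace transform \eqref{laplace}: $U_\ve$ solves \eqref{heat-resolvent-dirichlet} and is constant on $\Gamma$ for every $\ve>0$. Letting $t\to\infty$, $u(\cdot,t)\to h$, the harmonic extension of $f$, so $h$ is constant on $\Gamma$; similarly all the time derivatives behave well. The key structural idea is that the function $v=u-h$ solves the heat equation with zero boundary data and initial value $-h$, so $v=-\sum_n e^{-\la_n t}\langle h,\phi_n\rangle \phi_n$, where $\phi_n$ are Dirichlet eigenfunctions of $\Om$. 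Time-invariance of $u$ on $\Gamma$ for all $t$, together with analyticity in $t$ and independence of the exponentials, gives that for each eigenvalue $\la_n$ the function $\sum_{\la_k=\la_n}\langle h,\phi_k\rangle\phi_k$ is constant on $\Gamma$.

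Next, I would show that $\langle h,\phi_1\rangle\neq 0$ for the first eigenfunction, or more generally pick the smallest eigenvalue with nonzero projection. Note $\langle h,\phi_k\rangle=-\la_k^{-1}\int_{\pa\Om} f\,\pa_\nu\phi_k\,dS$ by Green's formula (using the exterior sphere condition for boundary regularity enough to justify this weakly), and since $f\not\equiv0$ and the normal derivatives of eigenfunctions span a dense set in a suitable sense, some projection is nonzero; hence $v\not\equiv 0$. Call $w$ the corresponding eigenspace combination; it satisfies $\De w+\la w=0$ in $\Om$, $w=0$ on $\pa\Om$, $w=c$ on $\Gamma$. If $c=0$ and $\ol D\subset\Om$, then $w$ is a Dirichlet eigenfunction in $D$ too; I would aim to rule out degenerate situations by using several eigen-levels simultaneously and the fact that $h$ is constant on $\Gamma$.

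The symmetry step: on $D$ (or on $\Om\setminus\ol D$), the function $h$ is harmonic and constant on $\Gamma$; in the case $\ol D\subset\Om$, $h$ is constant in $D$, hence in $\Om$ by unique continuation, so $f$ is constant, reducing to the classical case of \cite{MS1,MS2} (where $D$ and $\Om$ are shown to be concentric balls via Varadhan's formula giving $\Gamma$ parallel to $\pa\Om$ and then the heat content asymptotics with Alexandrov). In the case $\pa D$ connected with $\pa D\cap\Om\neq\varnothing$, I would use $u$ constant on $\Gamma$ together with $u=f$ on $\pa D\cap\pa\Om$ and the short-time formula, Theorem~\ref{thm:varadhan-elliptic-parabolic} or Corollary~\ref{cor:sign-changing}: $c(t)$ has a definite Varadhan rate, forcing $d_\cS$ constant on $\Gamma$, and then analyze via Theorem~\ref{thm:mean-value-asymptotics} that $f(q)/\sqrt{\prod(1/R-\ka_i(q))}$ is constant over touching points $q$, combined with constancy of $h$ on $\Gamma$, to conclude $f$ constant and again reduce to the known rigidity.

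The main obstacle I expect is precisely deducing that $f$ must be constant: in the first case unique continuation of $h$ does it cleanly, but in the second case $h$ is only known constant on a relatively open piece of a level set, and I would first try to show $h$ solves an overdetermined problem in $D$ ($h=c$ on $\Gamma$, plus the eigenfunction information from each $w$) forcing $h\equiv c$ via the interior cone condition and a maximum-principle/Hopf argument across $\Gamma$.
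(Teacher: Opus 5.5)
Your treatment of the case $\ol{D}\subset\Om$ is the paper's argument. The large-time limit makes the harmonic extension $h$ of $f$ constant on $\Ga=\pa D$, hence on $D$, and by analyticity on $\Om$. So $f$ is a non-zero constant and \cite[Theorem 1.1]{MS1} applies to $u/f$. The eigenspace-by-eigenspace information you extract is not needed.

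The gap is in the other alternative, $\pa D$ connected with $\pa D\cap\Om\ne\varnothing$. You assume $\pa D$ really reaches $\pa\Om$ and try to prove $f$ constant through Theorem \ref{thm:varadhan-elliptic-parabolic} and Theorem \ref{thm:mean-value-asymptotics}. These need $\pa\Om\in C^{1,1}$, resp.\ $C^2$ near $q$ with an interior touching ball, which the exterior sphere condition does not provide; the touching radii also degenerate as points of $\Ga$ approach $\pa\Om$. By your own account you end without an argument. The missing idea is that this alternative is empty: $\ol{\Ga}\cap\pa\Om=\varnothing$ (Lemma \ref{lem:1}). The argument runs as follows.
\begin{enumerate}[(1)]
\item For $f\ge 0$, $f\not\equiv 0$, the strong maximum principle gives $c(t)>0$ for $t>0$.
\item Suppose $x_0\in\ol{\Ga}\cap\pa\Om$. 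Since $x_0$ is a regular point, letting $x\to x_0$ along $\Ga$ gives $c(t)=f(x_0)$ for every $t>0$.
\item For any $z\in\Ga\subset\Om$, $c(t)=u(z,t)\to 0$ as $t\to 0^+$, because $u_0\equiv 0$. Hence $c\equiv f(x_0)=0$, contradicting (1).
\item Now $\pa D=\ol{\Ga}\cup(\ol{D}\cap\pa\Om)$ is a union of two disjoint closed sets. Since $\pa D$ is connected and $\Ga\ne\varnothing$, we get $\ol{D}\cap\pa\Om=\varnothing$, i.e.\ $\ol{D}\subset\Om$, and the first case applies.
\end{enumerate}

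This step genuinely uses $f\ge 0$. That is the standing assumption of the subsection and the hypothesis of Lemma \ref{lem:1}, even though the statement only says $f\not\equiv 0$. So your plan to cover sign-changing data in this alternative via Corollary \ref{cor:sign-changing} cannot work. Take $\Om$ a ball and $f$ odd in $x_N$. By odd symmetry $u=0$ on the equatorial disk for all $t>0$ (cf.\ Remarks \ref{rem:threshold} and \ref{rem:IS-sign-changing}). Then the upper half-ball $D$ satisfies every hypothesis of the second alternative, with $c\equiv 0$, and it is not a ball. For sign-changing $f$ only the case $\ol{D}\subset\Om$ survives, and there your argument is correct.
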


The proof of this theorem is given in Section \ref{subsec:short-times}. It essentially relies on the study of the large-time behavior of the solution, which, in presence of the relevant invariant surface, forces the data $f$ to be constant on $\pa\Om$, by {analyticity} in space. Thus, \cite[Theorem 1.1]{MS1} applies.  
\par
It should be noticed that the proof \cite[Theorem 1.1]{MS1} only relies on the short-time behavior. In fact, when $f$ is constant, the large times give no useful information. On the other hand, the {analyticity} in time of the caloric functions tells us that assuming that a surface is invariant for short times is equivalent to assume it invariant for \textit{all} times. However,
the importance of short-times was confirmed in \cite{MS2, MS4}, in which the relevant symmetry results were obtained for certain nonlinear settings, in which analyticity can no longer be used.  
\par
For these reasons, it is interesting to analyse the case in which a surface $\Ga$ is assumed to be invariant only for a time interval $(0,b)$, or even for a sequence of times $t_n$ converging to $0$.  In Sections \ref{subsec:short-times} and \ref{subsec:short-times-f-sign-changing}, we shall consider the first case. For the second case, in a different setting, compare with \cite{CP}. This means that we do not (want to) rely on the information for large times used in Theorem \ref{thm:rigidity-definitive}, which forced $f$ to be constant on $\pa\Om$. As a consequence of this choice, we still obtain spherical symmetry at the cost of assuming that $u$ admits \textit{two} time-invariant surfaces. The following result will be proved in Section \ref{subsec:short-times}.

\begin{thm}[Two invariant surfaces]
\label{thm:two-surfaces}
Let {$\Om\subset \mathbb R^N$} be a bounded domain. Let $f \in C(\pa\Om)$ be non-negative.
Assume that either
    \begin{enumerate}[(a)]
        \item $\Om$ satisfies the exterior sphere condition and $\cS=\pa\Om$, or
        \item $\pa\Om \in C^{1,1}$ and $\overline{\cS}=\pa\Om$.
    \end{enumerate}
Let $u$ be the solution of \eqref{heat-cauchy-dirichlet}.
\par
Suppose $D_1, D_2$ are two domains whose boundaries $\Ga_1$ and $\Ga_2\subset\Om$ are disjoint and satisfy the interior cone condition.
Assume that, for each $i=1, 2$, \eqref{time-invariant} holds  for $\Ga_i$  and some function $c_i:(0,b)\to (0,\infty)$. Then, $D_1, D_2$, and $\Om$ are concentric balls, and $f$ is constant on $\pa\Om$.
\end{thm}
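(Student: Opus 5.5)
Apply Theorem \ref{thm:varadhan-elliptic-parabolic} in both cases (a) and (b). In case (b), $\ol{\cS}=\pa\Om$ and $\pa\Om\in C^{1,1}$ give $d_\cS=d_{\pa\Om}$ on $\Om$. In case (a), $\cS=\pa\Om$, so $f\ge a^->0$; Proposition \ref{prop:varadhan-easy-extension} applies and only needs the exterior sphere condition. With $A=I$, $d_{\pa\Om}$ is the Euclidean distance to the boundary. Formula \eqref{varadhan-non-negative} holds uniformly on compact subsets of $\Om$, and each $\Ga_i$ is compact in $\Om$. Hence for $x\in\Ga_i$
$$
d_{\pa\Om}(x)^2=\lim_{t\to0^+}\bigl[-4t\log u(x,t)\bigr]=\lim_{t\to 0^+}\bigl[-4t\log c_i(t)\bigr],
$$
and the right-hand side does not depend on $x$. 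So $d_{\pa\Om}=R_i$ on $\Ga_i$ for constants $R_1, R_2>0$. The two values differ: $R_1=R_2$ would force the two disjoint closed hypersurfaces to lie in the same parallel surface $\{d_{\pa\Om}=R\}$. Using the interior cone condition, as in \cite{MS1}, each $\Ga_i$ must then fill a whole connected component of that level set, which leads to a contradiction. Say $R_1<R_2$.

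Next, I show $\Ga_i=\{x\in\Om: d_{\pa\Om}(x)=R_i\}$ and that this level set bounds $D_i$, i.e. $D_i=\{d_{\pa\Om}>R_i\}$. This is the standard argument of \cite{MS1}: since $D_i$ is bounded with boundary in $\{d_{\pa\Om}=R_i\}$, the set $D_i$ is a union of components of $\{d_{\pa\Om}\neq R_i\}$ disjoint from $\pa\Om$, hence $D_i\subseteq\{d_{\pa\Om}>R_i\}$. The interior cone condition rules out degenerate pieces. Consequently every point of $\Ga_i$ is the center of a ball $B_{R_i}(p)\subset\Om$ touching $\pa\Om$. Now pick a point $p\in\Ga_2$ and a touching point $q\in\pa\Om$ with $|p-q|=R_2$. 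The point $p'=q+R_1(p-q)/R_2$ lies on the segment $[q,p]$, has $d_{\pa\Om}(p')=R_1$, and so $p'\in\Ga_1$. The ball $B_{R_1}(p')$ touches $\pa\Om$ only at $q$, with $\ka_j(q)\le 1/R_2<1/R_1$. Near such $q$ the boundary is regular enough for Theorem \ref{thm:mean-value-asymptotics}, because interior balls of radius $R_2$ exist there (in case (a) one also has the exterior sphere).

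The key step is to exploit time-invariance. Since $u(\cdot,t)$ is constant on $\Ga_1$ and also on $\Ga_2$, the function $w(x,t)=u(x,t)$ restricted to $D_1\setminus\ol{D_2}$ (or to the region between the two surfaces, whichever is nested) solves the heat equation with boundary values depending only on $t$ on each component. By \eqref{laplace} the same holds for $U_\ve$. I then compare the heat contents (or the means of $U_\ve$) on $\pa B_{R_1}(x)$ for different centers $x\in\Ga_1$. The approach of \cite{MS1} is to show that $\int_{\pa B_{R_1}(x)}U_\ve\,dS$ is independent of $x\in\Ga_1$, using the invariance of $\Ga_2$ together with the fact that $\pa B_{R_1}(x)$ is tangent to $\Ga_2$ parallel structure. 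Letting $\ve\to0^+$ in \eqref{MS-f-non-constant} then gives that
$$
\frac{f(q)}{\sqrt{\prod_{i=1}^{N-1}\bigl[1/R_1-\ka_i(q)\bigr]}}
$$
is constant over touching points $q$. The same argument at level $R_2$ gives that $f(q)\bigl/\sqrt{\prod_i[1/R_2-\ka_i(q)]}$ is constant. Taking the ratio of the two constancy relations eliminates $f$ and yields $\prod_i\frac{1/R_1-\ka_i}{1/R_2-\ka_i}=\text{const}$. The function $s\mapsto\sum_i\log\frac{1/R_1-s_i}{1/R_2-s_i}$ is strictly monotone in each $s_i$. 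From this, a Wirtinger-type constraint on the curvatures of the parallel surfaces $\Ga_1$ and $\Ga_2$ (whose curvatures are $\ka_i/(1-R\ka_i)$) follows, and Alexandrov's reflection principle, or the argument of \cite{MS1}, gives that $\Ga_2$ is a sphere. Then all the parallel sets are concentric spheres, so $\Om$, $D_1$ and $D_2$ are concentric balls. Finally, all $\ka_i$ are equal and constant, so the first constancy relation forces $f$ to be constant on $\pa\Om$ (in case (b), first on $\cS$ and then on $\pa\Om$ by continuity).

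The main obstacle is justifying that the spherical means of $U_\ve$ centered on $\Ga_1$ are independent of the center. For constant $f$ this came from a balance-law argument in \cite{MS1}; here, since $u$ is not constant on $\pa\Om$, that argument must be replaced. The first thing I would try is to work on the region between $\Ga_1$ and $\Ga_2$, where both boundary data are functions of $t$ alone, and deduce from the asymptotics at both levels that the leading-order contributions must match.
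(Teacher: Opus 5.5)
There is a genuine gap, and it is at the step you yourself call the main obstacle. You never show that $\int_{\pa B_{R_i}(p)}U_\ve\,dS_x$ is independent of $p\in\Ga_i$. Moreover, nothing has to be ``replaced'' because $f$ is non-constant. The missing ingredient is the mean value property for solutions of $\ve^2\De U-U=0$, a purely interior identity that never sees the boundary data:
$$
U_\ve(p)=\frac{\psi_N(r/\ve)}{|\pa B_r|}\int_{\pa B_r(p)}U_\ve\,dS_x, \qquad \psi_N(\si)=\frac{\int_0^\pi(\sin\te)^{N-2}\,d\te}{\int_0^\pi e^{\si\cos\te}(\sin\te)^{N-2}\,d\te}.
$$
It holds for $\ol{B_r(p)}\subset\Om$ and, by continuity of $U_\ve$ up to $\pa\Om$, also for $r=d_{\pa\Om}(p)$. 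Since $U_\ve=c_i^\ve$ on $\Ga_i$ and $d_{\pa\Om}=R_i$ there, the mean over $\pa B_{R_i}(p)$ equals $c_i^\ve\,|\pa B_{R_i}|/\psi_N(R_i/\ve)$ for every $p\in\Ga_i$. Applying \eqref{MS-f-non-constant} at $q=p-R_i\nu^{\Ga_i}(p)$ then gives $f(q)\prod_j[1/R_i-\ka_j(q)]^{-1/2}=c_i$ for all $q\in\pa\Om$, because $q$ sweeps all of $\pa\Om$ as $p$ runs over $\Ga_i$. Your proposed substitute, working in the region between $\Ga_1$ and $\Ga_2$, cannot produce this. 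The data $f$ do not enter any problem posed there, while the spheres $\pa B_{R_i}(p)$ reach $\pa\Om$ and leave that region. So no information on $f(q)$ or $\ka_j(q)$ can come out of it.

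The geometric steps around this are also under-justified. Theorem \ref{thm:mean-value-asymptotics} needs three things at each $q$:
\begin{enumerate}[(i)]
\item $\pa\Om$ of class $C^2$ near $q$;
\item $\pa B_{R_i}(p)\cap\pa\Om=\{q\}$;
\item the strict bound $\ka_j(q)<1/R_i$ for \emph{both} radii.
\end{enumerate}
Interior balls together with an exterior sphere, or $C^{1,1}$ regularity, do not give $C^2$. Your segment trick gives strictness only at the smaller radius $R_1$. The paper obtains all of this from the analyticity of $\Ga_i$ in Lemma \ref{lem:lemma3.1ms1_revisited}(ii), proved via $\na U_\ve\ne0$ on $\Ga_i$ using the interior cone condition, and from Corollary \ref{cor:geometric_consequences}. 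The same corollary gives the surjectivity of $p\mapsto p-R_i\nu^{\Ga_i}(p)$ onto $\pa\Om$. That settles $R_1\ne R_2$ at once, since equal radii would force $\Ga_1=\Ga_2$.

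Before taking the ratio of the two relations you must also check that $f>0$ on $\pa\Om$. A single zero of $f$ would force $c_i=0$, hence $f\equiv0$. With these pieces in place, your final steps (monotonicity of $\Psi$, Alexandrov's theorem, concentric spheres, constancy of $f$) agree with the paper, which applies Theorem \ref{thm:Al} directly to $\pa\Om$.
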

\par
We conclude this introduction by presenting a non-existence result for invariant surfaces compactly contained in $\Om$, when $f$ is allowed to change sign. In what follows, we set
$$
 G=\{x\in \ol{\Om}:\, d_{\cS^+}(x)=d_{\cS^-}(x)\},
$$
where the sets $\cS^\pm$ are defined in \eqref{def-S12}, and
$$
G_\reg=\{x\in G: x \mbox{ has a unique closest point on } \pa\Om\}, \quad G_\sing=G\setminus G_\reg.
$$

The following result will be proved in Section \ref{subsec:short-times-f-sign-changing}.

\begin{thm}[Non-existence of invariant surfaces]
\label{thm:non-existence}
Let {$\Om\subset\RR^N$} be a bounded domain with boundary $\pa\Om$ of class $C^{1,1}$. Let $f\in C(\pa\Om)$ and suppose that it changes sign, i.e. the sets $\cS^\pm$ are non-empty and, also, $\ol{\cS}=\pa\Om$. 
\par
Assume that 
$G_\reg$ is the finite union of $k$ disjoint $(N-1)$-dimensional surfaces of class $C^1$ and the $(N-1)$-dimensional Hausdorff measure of $G_\sing$ is zero.
\par
Let $u=u(x,t)$ be the solution of \eqref{heat-cauchy-dirichlet}. If $\Ga=\pa D$, with $\ol{D}\subset\Om$, and $\pa D$ is of class $C^1$ and satisfies the interior sphere condition, then condition \eqref{time-invariant} is never satisfied for $a=0$.
\end{thm}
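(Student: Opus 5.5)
We argue by contradiction and use only short times. Suppose $\Ga=\pa D$, with $\ol D\subset\Om$, satisfies $u=c(t)$ on $\Ga\times(0,b)$. By Corollary \ref{cor:sign-changing}, for $x\in\Om\setminus G$ we have $-4t\log|u(x,t)|\to d_\cS(x)^2$, where $d_\cS=\min(d_{\cS^+},d_{\cS^-})$. Since $\ol\cS=\pa\Om$, $d_\cS$ coincides with $d_{\pa\Om}$ on $\Om$; we check this directly from the definition in Section \ref{subsec:distance}, as we did in the nonnegative case of Theorem \ref{thm:two-surfaces}(b). The set $\Ga\setminus G$ is nonempty. Indeed, $G_\reg$ is a finite union of $C^1$ hypersurfaces and $\cH^{N-1}(G_\sing)=0$, while $\Ga$ is a $C^1$ closed hypersurface. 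If $\Ga\subset G$, then $\Ga$ would be covered up to a null set by the $G_\reg$ pieces. We handle that case separately below.

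\emph{Step 1: $\Ga$ is parallel to $\pa\Om$.} Take two points $x,y\in\Ga\setminus G$. Then $|c(t)|=|u(x,t)|=|u(y,t)|$, so $d_{\pa\Om}(x)=d_{\pa\Om}(y)$. This gives a constant $R$ with $d_{\pa\Om}=R$ on $\Ga\setminus G$. We extend this to all of $\Ga$ by density: $\Ga\cap G_\sing$ is $\cH^{N-1}$-null, and $\Ga\cap G_\reg$ has empty interior in $\Ga$ unless $\Ga$ shares an open piece with $G_\reg$. Hence $d_{\pa\Om}\equiv R$ on $\Ga$.

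\emph{Step 2: sign of $c$ and a contradiction.} The set $\Ga=\{d_{\pa\Om}=R\}\cap \ol D$ is a closed surface parallel to $\pa\Om$. Every point $x\in\Ga$ has a nearest boundary point $q(x)$, and $x\mapsto q(x)$ maps $\Ga$ onto $\pa\Om$. This is the standard fact for boundaries of $\{d_{\pa\Om}>R\}$: every boundary point is reached from inside along the inner normal. We use the interior sphere condition on $\pa D$ to identify $D$ with a component of $\{d_{\pa\Om}>R\}$. Now pick $q^\pm\in\cS^\pm$ and points $x^\pm\in\Ga$ with $q(x^\pm)=q^\pm$, and with $x^\pm$ having unique nearest point. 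Near $x^+$, $d_{\cS^+}(x^+)=R<d_{\cS^-}(x^+)$, since $q^+$ is the unique nearest point and lies in the open set $\cS^+$. So for small $t$, Corollary \ref{cor:sign-changing} together with its proof (which gives the sign, via comparison of $u$ with the solutions for $f^\pm$) yields $u(x^+,t)>0$. Symmetrically, $u(x^-,t)<0$ for small $t$. But $u(x^+,t)=u(x^-,t)=c(t)$, which is a contradiction.

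\emph{Main obstacle.} The delicate points are twofold. First, we must make sure the sign conclusion in Step 2 holds: we need $u^+(x,t)\gg u^-(x,t)$ when $d_{\cS^+}(x)<d_{\cS^-}(x)$, where $u=u^+-u^-$ with data $f^\pm$. This follows from Theorem \ref{thm:varadhan-elliptic-parabolic} applied to each of $u^\pm$, since the exponential rates differ strictly. Second, we must exclude the degenerate case in which $\Ga$ lies largely inside $G$. In that case $\Ga$ would contain an open piece of $G_\reg$, along which $d_{\cS^+}=d_{\cS^-}$. Points of $G_\reg$ have a unique closest boundary point $q$. If $q\in\cS^+$, then $d_{\cS^+}<d_{\cS^-}$, so such $q$ must lie in $\pa\Om\setminus\cS$, a set with empty interior. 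We would first try to show that the projection of an open piece of $\Ga$ onto $\pa\Om$ contains an open set. This holds because $\Ga$ is a $C^1$ parallel surface, so the projection is a local homeomorphism. That projection would then meet $\cS$, which contradicts $\ol\cS=\pa\Om$. Hence $\Ga\setminus G$ is dense, which closes both Step 1 and Step 2.
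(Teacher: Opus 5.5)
There is a genuine gap at the step you yourself flag as delicate. You must rule out that $\Ga$ contains a relatively open piece $P\subset\Ga\cap G_\reg$; that is what lets you pass from $d_{\pa\Om}=R$ on $\Ga\setminus G$ to $d_{\pa\Om}\equiv R$ on all of $\Ga$. You exclude $P$ by saying the nearest-point map $q$ is a local homeomorphism on $P$ ``because $\Ga$ is a $C^1$ parallel surface.'' But on $P\subset G$, Corollary \ref{cor:sign-changing} gives no information, so parallelism on $P$ is exactly what is not yet known, and the argument is circular. Worse, the mechanism cannot work. If $x\in G_\reg$ has the unique nearest point $y$, then every point of the segment $[x,y)$ has the same unique nearest point $y\in\ol{\cS^+}\cap\ol{\cS^-}$, and hence lies in $G_\reg$. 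So $G_\reg$ is ruled by normal segments, $q$ is constant along the segments contained in $P$, and $q|_P$ is nowhere locally injective. There is therefore no reason for $q(P)$, which lies in $\pa\Om\setminus\cS$, to contain an open set, and the contradiction with $\ol{\cS}=\pa\Om$ is unavailable.

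The missing ingredient is the analyticity of $\Ga$, and this is where the interior sphere condition on $\pa D$ is really needed. You use that condition only to identify $D$ with a component of $\{d_{\pa\Om}>R\}$, which already follows from $d_{\pa\Om}=R$ on $\pa D$. The argument goes as follows. $U_\ve$ is constant on $\pa D$, so the strong maximum principle and Hopf's lemma give $\na U_\ve\ne 0$ on $\Ga$; hence $\Ga$ is analytic. An open piece of $\Ga$ lying in one of the $k$ surfaces composing $G_\reg$ then contains a normal segment. By analyticity this segment continues inside $\Ga$ up to $\pa\Om$, contradicting $\ol{D}\subset\Om$. This is how the paper shows that $\Ga\setminus G$ is dense in $\Ga$.

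Second, the ``standard fact'' in Step 2 is false as stated. Take $y\in\pa\Om$ near which $\pa\Om$ is $C^2$ with some $\ka_j(y)>1/R$. Then $y$ is never the nearest point of a point at distance $R$, since the touching interior ball would force $\ka_j(y)\le 1/R$. Also, if $\{d_{\pa\Om}>R\}$ is disconnected (think of a dumbbell), the boundary of one component reaches only part of $\pa\Om$. Surjectivity of $q:\Ga\to\pa\Om$ must therefore come from the specific situation. The paper takes it from Corollary \ref{cor:geometric_consequences}(ii)--(iii), after establishing analyticity and $d_{\pa\Om}\equiv R$ on $\Ga$.

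A direct route also exists. Once $d_{\pa\Om}\equiv R$ on the $C^1$ surface $\Ga$, each $x\in\Ga$ has the unique nearest point $x-R\nu^\Ga(x)$. This map is injective because $\pa\Om$ is $C^1$, and invariance of domain together with the connectedness of $\Om$ shows that it is onto.

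Once these two points are repaired, your closing sign argument, $u(x^+,t)>0>u(x^-,t)$ for small $t$, is correct. It is a slightly more direct version of the paper's step (iv), which places $D$ in $\ol{\Om^+}$ via Lemma \ref{lem:location-invariant-surface}(ii) and then deduces $\ol{\cS^+}=\pa\Om$.
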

\par
Nevertheless, it should be noticed that, in the sign-changing regime, the existence of invariant surfaces which extend up to the boundary $\pa\Om$ is certainly possible. It is also possible that time-invariant ``surfaces'' with self-intersections occur.  See Remark \ref{rem:IS-sign-changing}. 
\par
The problem of characterizing this kind of surfaces is quite delicate and deserves a separate treatment, which will be the theme of a forthcoming paper.

%
%
%

\section{General assumptions and notations}
\label{sec:assumptions}

In this section, we collect the main ingredients and preliminary results needed for the sequel of the paper.

\subsection{The matrix $A(x)$}
\label{subsec:A}
The following assumptions are made on the symmetric matrix of functions $A(x)$:
\begin{enumerate}[(a)]
\item
$A$ is differentiable and locally Lipschitz continuous in $\RR^N$ and, for any compact set $K$,  there is a constant $L_K>0$  such that
$$
|A(x)|, |\na A(x)|\le L_K\ \ \mbox{ for any } \ x\in K.
$$
\item
there is a positive constant $\la$ such that
$$
\bigl\lan A(x) \xi, \xi\bigr\ran  \ge \la\,|\xi|^2 \ \mbox{ for any } \ x, \xi\in\RR^N.
$$
\end{enumerate}

\subsection{The domain $\Om$}
\label{subsec:Omega}
We consider a bounded domain $\Om\subset\RR^N$, $N\ge 2$. For $r>0$, we set:  
$$
\Om_r=\{ y\in\RR^N : \dist(y,\Om)<r\} \ \mbox{ and } \ \Om^r=\{ y\in\RR^N : \dist(y,\Om)\ge r\}.
$$
Also, $B_R$ will denote the ball of radius $R$ centered at the origin,  and, for two sets $E$ and $F$, we define:
$$
E+F=\{x+y: x\in E, y\in F\},
$$
the so-called \textit{Minkowski sum} of $E$ and $F$. Thus, we have that $\Om_r=\Om+B_r$.\par
We know that, the boundary $\pa\Om$ is of class $C^{1,1}$ if and only if  $\pa\Om$ satisfies both the interior and exterior uniform sphere condition. Occasionally, the relevant radii in these conditions will be denoted by $r_i$ and $r_e$.   
If $r<\min(r_i, r_e)$, then also $\Om^r$ satisfies both the interior and exterior uniform sphere condition (with smaller radius).
\par
When $\pa\Om$ is of class $C^2$, we shall denote by $\ka_1,\cdots, \ka_{N-1}$ the principal curvatures of $\pa\Om$ with respect to the interior normal $\nu$ at points of $\pa\Om$.

\subsection{The distance $d_\cS$}
\label{subsec:distance}
We define an intrinsic distance of points in $\Om$ to subsets of $\pa\Om$, which takes into account the metric induced by $A(x)$ and the shape of $\Om$. 
\par
In fact, we first define a point dependent inner product by
$$
\bigl\lan w_1, w_2\bigr\ran_A=\bigl\lan A^{-1}(x)\, w_1, w_2 \bigr\ran, \ w_1, w_2\in\RR^N,
$$ 
where $A^{-1}(x)$ is the inverse matrix of $A(x)$. The assumptions on $A(x)$ guarantee that the related norm $| w|_A$ is equivalent to the standard Euclidean norm.
\par
Next, we shall consider paths of class $H^1_\loc([0,\infty);\RR^N)$ and define, for $t\ge 0$ and $x\in\Om$, the number:
$$
T_x=\inf\bigl\{ \tau>t: \ga(\tau)\in\RR^N\setminus\Om, \ \ga(t)=x\bigr\},
$$
where the infimum of the empty set is $+\infty$.
This is the \textit{first exit time} from $\ol{\Om}$, 
after time $t$.
Then, if $\cS\subseteq\pa\Om$, we can define a distance as
\begin{equation}
\label{def-d-S}
d_\cS(x)=\inf\biggl\{
\int_0^{T_x} \bigl|\ga'(t)\bigr|_A\, dt:  \ga(0)=x, \ \ga(T_x)\in\cS \ \mbox{ if } \ T_x<\infty
\biggr\}, 
\end{equation}
for $x\in\ol{\Om}$.
 
\begin{rem}[\cite{EI,CDL}]
{\rm
We recall from \cite{EI} the following facts. See also \cite{CDL}, for other details.

\par
(i) Notice that it holds the alternative definition:
$$
d_\cS(x)=\inf\biggl\{
\frac12\int_0^{T_x} \Bigl(\bigl|\ga'(t)\bigr|_A^2+1\Bigr) dt:  \ga(0)=x, \ \ga(T_x)\in\cS \ \mbox{ if } \ T_x<\infty \biggr\}.
$$
\par
(ii)
The function $d_\cS$ is of class $C^{0,1}(\ol{\Om})$ and is a viscosity solution of
the eikonal equation, in the relevant metric, 
\begin{equation*}
\bigl\lan A(x) \na v, \na v\bigr\ran-1=0 \ \mbox{ in } \ \Om,
\end{equation*}
such that $d_\cS=0$ on $\ol{\cS}$.
\par
The proof of this formula can be obtained by merging the arguments used in \cite[Examples 1 and 2]{EI}. In particular, we can proceed as in \cite[Lemma 1.3]{EI}, with $\la=1/2$ and the obvious modification regarding the constraint $\ga(T_x)\in\cS$, which appears in \cite[Example 2]{EI}. 
}
\end{rem}

\begin{rem}
\label{rem:d-r}
{\rm
Set 
$$
\cS_r=\{ y\in\pa\Om_r : |x-y|<r \ \mbox{ for any } \ x\in\cS\}.
$$
It is clear that $\dist(x,\cS_r)<r$, for any $x\in\cS$, and  $\dist(y,\cS)<r$ for any $y\in\cS_r$.
Also, if $d_{\cS_r}$ is defined as in \eqref{def-d-S}, with $\cS$ replaced by $\cS_r$, we have that
$d_{\cS_r}\le C^*\,r$ on $\cS$, for some constant $C^*$.
}
\end{rem}

\subsection{The sets $\cS$, $\cS^\pm$, $\Om^\pm$, and $G$}
\label{subsec:def-S}
We take $f\in C(\pa\Om)$ and define
$$
\ul{f}=\min_{\pa\Om} f, \quad \ol{f}=\max_{\pa\Om} f.
$$
\par
As in \eqref{def-S12} we set:
\begin{equation*}
\cS^+=\{ x\in\pa\Om : f(x)>0\}, \quad \cS^-=\{ x\in\pa\Om : f(x)<0\},
\end{equation*}
and $\cS=\cS^+\cup\cS^-$. Notice that, when $f\ge 0$, then
\begin{equation}
\label{def-S}
\cS=\{ x\in\pa\Om : f(x)>0\},
\end{equation}
so that its closure $\ol{\cS}$ is the usual support of $f$.
Moreover, we define the sets
 $$
 \Om^\pm =\{x \in \Om:\, \pm\left[d_{\cS^+}(x)-d_{\cS^-}(x)\right] < 0\}.
$$
Finally, we recall from the introduction the definition:
$$
G=\{x\in \ol{\Om}:\, d_{\cS^+}(x)=d_{\cS^-}(x)\}.
$$


The next result provides one case of sufficient conditions for which the interior of $G$ is empty.

\begin{lem}
\label{lem: sufficient condition}
    Let $\Om$ be a convex domain in $\RR^N$ and let $\cS^+$ and $\cS^-$ be subsets of $\pa\Om$ such that $\ol{\cS^+}\cap\ol{\cS^-}=\varnothing$.
Then $G$ contains no open sets. 
In particular, we have that $G=\overline{\Om^+}\cap \overline{\Om^-}$ and $\Om=\overline{\Om^+}\cup\overline{\Om^-}$.
\end{lem}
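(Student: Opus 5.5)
The plan is to argue directly with nearest points and avoid any regularity theory. We work in the Euclidean setting implicit in the statement, $A=I$. First we use convexity of $\Om$ to identify the intrinsic distances with ordinary Euclidean distances. For $x\in\ol{\Om}$ and $y\in\pa\Om$, the segment $[x,y]$ lies in $\ol{\Om}$, and its part $[x,y)$ lies in $\Om$ whenever $x\in\Om$. Hence the infimum in \eqref{def-d-S} is attained by straight segments, and
$$
d_{\cS^\pm}(x)=\dist\bigl(x,\ol{\cS^\pm}\bigr),\qquad x\in\ol{\Om}.
$$
Consequently both functions are $1$-Lipschitz, and the infimum defining each is attained at some point of the compact set $\ol{\cS^\pm}$.

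The key step is the following claim. Let $x\in G\cap\Om$, set $d=d_{\cS^+}(x)=d_{\cS^-}(x)$ (note $d>0$, since $x\in\Om$ and $\ol{\cS^\pm}\subset\pa\Om$), and pick nearest points $y^\pm\in\ol{\cS^\pm}$ with $|x-y^\pm|=d$. Put $e=(y^- -x)/d$. We claim that for small $s\in(0,d)$ the point $x_s=x+se$ belongs to $\Om^-$.

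To prove the claim, first note that $x_s\in\Om$ by convexity. Moreover $d_{\cS^-}(x_s)=d-s$: the inequality $\le$ is witnessed by $y^-$, and $\ge$ follows from the $1$-Lipschitz property. On the other hand $d_{\cS^+}(x_s)\ge d-s$, again by the Lipschitz property. Suppose equality held. Then there would be $y'\in\ol{\cS^+}$ with $|x_s-y'|=d-s$, hence
$$
|x-y'|\le s+(d-s)=d.
$$
Since $d_{\cS^+}(x)=d$, this is an equality in the triangle inequality. That forces $y'$ to lie on the ray from $x$ through $x_s$ at distance $d$, i.e. $y'=y^-$, which contradicts $\ol{\cS^+}\cap\ol{\cS^-}=\varnothing$. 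So $d_{\cS^+}(x_s)>d_{\cS^-}(x_s)$, that is, $x_s\in\Om^-$.

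Exchanging the roles of the signs and moving toward $y^+$ produces points of $\Om^+$ arbitrarily close to $x$. Therefore every $x\in G\cap\Om$ lies in $\ol{\Om^+}\cap\ol{\Om^-}$. In particular no point of $G\cap\Om$ has a neighbourhood contained in $G$. Points of $G\cap\pa\Om$ cannot be interior points of $G$ in any case, since an open set contained in $G\subset\ol\Om$ lies in the interior of $\ol{\Om}$, which for convex $\Om$ is $\Om$. So $G$ contains no open sets.

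For the remaining identities, continuity of $d_{\cS^+}-d_{\cS^-}$ gives $\ol{\Om^+}\cap\ol{\Om^-}\subset G$, since the difference is $\le0$ on one closure and $\ge0$ on the other. The reverse inclusion holds at interior points by the claim. At boundary points of $G$ the same ray argument applies, or one passes to the closure, reading $G$ as the closure of $G\cap\Om$ if needed. Finally, $\Om=\Om^+\cup\Om^-\cup(G\cap\Om)$ together with the claim yields $\Om=\ol{\Om^+}\cup\ol{\Om^-}$.

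The main obstacle I expect is the strict-inequality step, namely the rigidity in the triangle inequality, which is exactly where the disjointness of the closures is used. The only other delicate point is the handling of points of $G$ on $\pa\Om$ in the identity $G=\ol{\Om^+}\cap\ol{\Om^-}$. If $x\in\pa\Om$ with $d>0$ the same segment argument works. If $d=0$ then $x\in\ol{\cS^+}\cap\ol{\cS^-}$, which is impossible, so this case does not occur.
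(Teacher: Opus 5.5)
Your proposal is correct and is essentially the paper's own argument (part (i) of its proof). You slide from $x\in G$ along the segment toward a nearest point of $\ol{\cS^\pm}$ and use the equality case of the triangle inequality, which is exactly where $\ol{\cS^+}\cap\ol{\cS^-}=\varnothing$ enters; you move toward $y^-$ and land in $\Om^-$, while the paper moves toward $y^+$ and shows $x_\de\notin G$, which is the same argument.

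Your version is more complete than the paper's, since you justify $d_{\cS^\pm}=\dist(\cdot,\ol{\cS^\pm})$ via convexity and actually treat the ``in particular'' identities. Two minor corrections. At points of $G\cap\pa\Om$ the segment may lie in $\pa\Om$, so you need a one-line continuity-and-density step rather than ``the same argument''. And what you obtain is $\Om\subset\ol{\Om^+}\cup\ol{\Om^-}$, i.e. $\ol{\Om}=\ol{\Om^+}\cup\ol{\Om^-}$; literal equality with $\Om$ fails, because points of $\cS^\pm$ lie in $\ol{\Om^\pm}$ but not in $\Om$.
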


\begin{proof}
(i) Take $x\in G$. There exist $y^\pm\in \cS^\pm$ such that $
|x-y^+|=d_{\cS^+} (x)=d_{\cS^-} (x)=|x-y^-|$. 
\par
Set $r=|x-y^\pm|$. We shall show
that, for any $\de\in (0,1)$,  the point $x_\de= x + \de (y^+- x)$ does not belong to $G$. Indeed, we easily
compute that $d_{\cS^+}(x_\de)\le|x_\de- y^+| = r\,(1-\de)$. On the other hand, if
$y_\de^-$ denotes the point in $\cS^-$ such that $d_{\cS^-}(x_\de)= |x_\de-y^-_\de|$, by the triangular inequality we infer that 
$$
|x_\de-y_\de^-|\ge |x-y_\de^-|-|x-x_\de|\ge r-r\,\de=|x_\de-y^+|.
$$
\par
Thus, a straightforward analysis of equality cases in the triangular inequality gives 
that the first inequality is strict unless $y^-= y^+$. Hence, $\ol{\cS^+}\cap\ol{\cS^-}\ne\varnothing$ --- a contradiction.
\par
(ii) An alternative proof. Suppose the interior $G'$ is non-empty. Being $G'$ an open set, we infer that 
$$
\na d_{\cS^+}=\na d_{\cS^-},
$$ 
almost everywhere in $G'$. Let $y\in G'$ a point at which $d_{\cS^+}$ and $d_{\cS^-}$ are differentiable and $\na d_{\cS^+}(y)=\na d_{\cS^-}(y)$. If $z^\pm\in\cS^\pm$ are points such that
$$
d_{\cS^\pm}(y)=|y-z^\pm|,
$$
then $|y-z^+|=|y-z^-|$, being as $d_{\cS^+}(y)=d_{\cS^-}(y)$, and
$$
\frac{y-z^+}{|y-z^-|}=\frac{y-z^+}{|y-z^+|}=\na d_{\cS^+}(y)=\na d_{\cS^-}(y)=\frac{y-z^-}{|y-z^-|}.
$$
This gives that $z^+=z^-$, i.e. $\ol{\cS^+}\cap\ol{\cS^-}\ne\varnothing$ --- a contradiction.
\end{proof}

\begin{ex}[The interior of $G$ may be non-empty]
\label{ex:fat-G}
{\rm
Here, we choose $A(x)=I$, so that the relevant distance is induced by the Euclidean distance of points. The two pictures in Figure 1 below give examples in which the set $G$ contains an open subset of $\Om$.

\begin{figure}[hbt]
\begin{center}
\begin{tikzpicture}[scale=.21]

\draw[very thick, black] (-21,0) arc (0.8:360:8); 


\fill [yellow, draw=yellow] (-21,0) -- (-34.6,-5.7) -- (-29,0) -- cycle;

\fill [yellow, draw=brown] plot [variable=\t, domain=225:360, samples=50]
({-29+8*cos(\t)}, {8*sin(\t)});


\draw[ultra thick, red] (-21,0) arc (0.8:90:8); 
\draw[ultra thick, green] (-21,0) arc (0.8:73:8); 
\draw[ultra thick, red] (-21,0) arc (0.8:59:8); 
\draw[ultra thick, green] (-21,0) arc (0.8:45:8); 
\draw[ultra thick, red] (-21,0) arc (0.8:34:8); 
\draw[ultra thick, green] (-21,0) arc (0.8:25:8); 
\draw[ultra thick, red] (-21,0) arc (0.8:18:8); 
\draw[ultra thick, green] (-21,0) arc (0.8:11:8); 
\draw[ultra thick, red] (-21,0) arc (0.8:5:8);

\draw[thick, brown] (-29,0) -- (-21,0); 
\draw[thick, brown] (-34.6,-5.7) -- (-29,0);

\node [olive] at (-26,6.3) {$\mathbf{\cS^+}$};  
\node [red] at (-27.2,9.3) {$\mathbf{\cS^-}$};  

\node [black] at (-27.5,-3.5) {$\mathbf{G'}$};  
\node [black] at (-20,0) {$A$};  

\fill [yellow, draw=brown] [thick] plot [smooth] coordinates
{(-10,0) (0,0) (-1,-0.5) (-2,-5) (-4,-7) (-7,-4.5) (-9,-2)  (-10,0)};

\draw [very thick, green] plot [variable=\t, domain=323:342, samples=50]
({2+10*cos(\t)}, {-0.5+10*sin(\t)});

\draw [very thick, red] plot [variable=\t, domain=342:365, samples=50]
({2+10*cos(\t)}, {-0.5+10*sin(\t)});

\draw [thick, yellow] plot [smooth] coordinates {(-10, 0) (-0.2, 0)};

\draw [thick, black] plot [smooth] coordinates {(-10, 0) (-10.3, 2.5) (-9, 4) (-5, 6) (0,7) (5,7) (8,6)  (11, 3)  (12, 0.4)};



\draw [thick, black] plot [smooth] coordinates {(0,0) (2,-0.5) (3.8,-4) (7.5,-8) (10,-6.6)};


\node [black] at (-5,-3) {$\mathbf{G'}$};  
\node [olive] at (12.5,-5.2) {$\mathbf{\cS^+}$};  
\node [red] at (13.5,-1.8) {$\mathbf{\cS^-}$};  

\end{tikzpicture}
\end{center}
\caption{The set $G'$ is contained in $G$. {\it Left:} $\Om$ is convex and the sets $\cS^+$ and $\cS^-$ accumulate at the point $A$. {\it Right:} $\Om$ is not convex. The distance depends on the shape of $\Om$.}
\end{figure}

}
\end{ex}

\section{The Varadhan formulas with non-constant boundary values}
\label{sec:non-constant}

In this section, we shall prove our main result, i.e. Theorem \ref{thm:varadhan-elliptic-parabolic}. Of course, throughout this section, we will assume that the Cauchy data $u_0$ is identically equal to zero.
\par
Before entering the details of the proof in its general setting, we show that Theorem \ref{thm:varadhan-elliptic-parabolic} easily holds true in case of bounded and non-vanishing Dirichlet boundary values. This proposition is obtained by extending some arguments used in \cite[Theorem 3.7]{MS4}.

\begin{prop}[Easy extension of Varadhan's formulas]
\label{prop:varadhan-easy-extension}
Let $\Om$ be an open domain in $\RR^N$  with boundary $\pa\Om$ made of regular points for the Dirichlet problem.
Let  $f\in C(\pa\Om)$ be such that $a^-\le f\le a^+$ on $\pa\Om$, for some positive constants $a^-, a^+$.  
\par
Let $u=u(x,t)$ and $U_\ve$ be the solutions of the problems \eqref{cauchy-dirichlet}  and \eqref{resolvent-dirichlet}. Then, formulas \eqref{varadhan} and \eqref{varadhan-elliptic} hold true, uniformly on $\ol{\Om}$.
\end{prop}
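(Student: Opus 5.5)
The plan is to sandwich the solutions between constant multiples of the classical Varadhan solutions, using the comparison principle. Let $u^1$ and $U^1_\ve$ denote the solutions of \eqref{cauchy-dirichlet} and \eqref{resolvent-dirichlet} with $u_0\equiv 0$ and $f\equiv 1$. These are exactly the functions for which the classical formulas \eqref{varadhan} and \eqref{varadhan-elliptic} are known from \cite{Va} (or \cite{EI}).

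\emph{Comparison step.} Since the problems are linear, $a^-u^1$ and $a^+u^1$ solve \eqref{cauchy-dirichlet} with boundary data $a^-$ and $a^+$ and zero initial data. Because $a^-\le f\le a^+$, the parabolic maximum principle gives
$$
a^-u^1\le u\le a^+u^1 \ \mbox{ in } \ \ol{\Om}\times(0,\infty).
$$
The elliptic maximum principle for $\ve^2\tr[A\na^2\,\cdot\,]-\cdot$ gives in the same way
$$
a^-U^1_\ve\le U_\ve\le a^+U^1_\ve \ \mbox{ in } \ \ol{\Om}.
$$
The regularity of the boundary points is what guarantees that these solutions exist, attain the boundary values continuously, and that comparison is legitimate. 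Alternatively, the elliptic inequality follows directly from the parabolic one through the Laplace transform \eqref{laplace}, since the transform is positivity preserving. In particular $u$ and $U_\ve$ are positive, so the logarithms below are well defined.

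\emph{Taking logarithms.} From the sandwich we obtain
$$
-4t\log u^1(x,t)-4t\log a^+\le -4t\log u(x,t)\le -4t\log u^1(x,t)-4t\log a^-,
$$
and analogously
$$
-\ve\log U^1_\ve(x)-\ve\log a^+\le -\ve\log U_\ve(x)\le -\ve\log U^1_\ve(x)-\ve\log a^-.
$$
The correction terms $4t\log a^\pm$ and $\ve\log a^\pm$ are constants in $x$ and tend to $0$. Hence the limits, and their uniformity, are inherited from those for $u^1$ and $U^1_\ve$.

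\emph{Main obstacle.} The delicate point is the uniformity on $\ol{\Om}$ for the classical case, under the weak hypothesis that $\pa\Om$ merely consists of regular points. Here I would rely on Varadhan's result for $U^1_\ve$, which states uniform convergence on $\ol{\Om}$ for such domains. I would then pass from the elliptic to the parabolic statement via \cite[Theorem 4.7]{Va}, i.e. Lemma~\ref{lem:laplace-transform}, which transfers uniform Laplace-transform asymptotics into uniform short-time asymptotics. This transfer relies on the monotonicity of $u^1$ in $t$, which follows from the maximum principle because $u^1_t\ge0$ for data $0$ at $t=0$ and $1$ on the boundary. If uniformity up to the boundary is not directly available there, I would argue as in \cite[Theorem 3.7]{MS4}. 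On $\pa\Om$ both sides of each formula equal $0$, since $u=f$ is bounded between positive constants there. Near $\pa\Om$, the continuity of $d_{\pa\Om}$ together with monotonicity in $t$ gives equicontinuity-type control, which extends uniformity from compact subsets of $\Om$ up to $\ol{\Om}$.
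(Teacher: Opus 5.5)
Your proposal is correct and is essentially the paper's proof: by linearity and comparison you sandwich $u$ (and $U_\ve$) between $a^-$ and $a^+$ times the solution with $f\equiv 1$, then take logarithms. The constant terms $4t\log a^\pm$ and $\ve\log a^\pm$ vanish, and both the limits and their uniformity on $\ol{\Om}$ are inherited from \cite{Va}, exactly as the paper does. Your passage from the elliptic to the parabolic case via Lemma~\ref{lem:laplace-transform} only makes explicit what the paper attributes to \cite{Va}, and your vaguer fallback argument near $\pa\Om$ is not needed.
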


\begin{proof}
We provide the details of the proof of \eqref{varadhan}. The proof of \eqref{varadhan-elliptic} runs similarly. 
Note, in passing, that a point is regular for the elliptic operator in \eqref{resolvent-dirichlet} if and only if  is regular for the Laplace operator (see \cite{LSW}).  
\par
Let $u^-$ and $u^+$ be the solutions of \eqref{cauchy-dirichlet} with $f$ replaced by $a^-$ and $a^+$. Hence, by uniqueness, $u^-/a^-$ and $u^+/a^+$ coincide with the solution $u^*$ of \eqref{cauchy-dirichlet} with $f\equiv 1$.
Also, by comparison we deduce that
$$
u^-\le u\le u^+ \ \mbox{ in } \ \Om\times (0,\infty),
$$
and hence 
$$
a^- u^*\le u\le a^+ u^* \ \mbox{ in } \ \Om\times (0,\infty).
$$
Thus, by \eqref{varadhan}, we conclude that
\begin{multline*}
-d_{\pa\Om}(x)^2=\lim_{t\to 0^+} 4t \log(f^- u^*(x,t))\le \liminf_{t\to 0^+} 4t \log u(x,t)\le \\ \limsup_{t\to 0^+} 4t \log u(x,t)\le\lim_{t\to 0^+} 4t \log(f^+ u^*(x,t))=-d_{\pa\Om}(x)^2,
\end{multline*}
for $x\in\ol{\Om}$. The uniform convergence follows from that in \cite{Va}.
\end{proof}

\subsection{The elliptic problem}

We start by recalling that the two problems \eqref{cauchy-dirichlet}  and \eqref{resolvent-dirichlet} are related by the formula \eqref{laplace}. In fact, a simple integration by parts shows that the latter problem follows from the former. 
Viceversa, the uniqueness theorem for the Laplace transformation gives that $u$ is uniquely determined by $U^\ve$.

We now proceed to consider problem \eqref{resolvent-dirichlet} in the case in which $f$ is non-negative and is allowed to vanish on $\pa\Om$.
We follow the arguments contained in \cite{EI}. In fact, we rescale the values of the solution $U_\ve$ of \eqref{resolvent-dirichlet} by a Hopf-Cole transformation, i.e. we
set  $V_\ve=-\ve \log U_\ve$. It is easy to see that $V_\ve$ satisfies the equation
\begin{equation}
\label{rescaled-equation}
-\ve\,\tr\bigl[ A(x)\,\na^2 V_\ve\bigr]+\bigl\lan A(x)\,\na V_\ve, \na V_\ve\bigr\ran-1=0 \ \mbox{ in } \ \Om,
\end{equation}
and is such that
\begin{equation}
\label{dirichlet-V-eps}
V_\ve=-\ve\,\log f \ \mbox{ on } \ \cS, \quad V_\ve=+\infty  \ \mbox{ on } \ \pa\Om\setminus\cS.
\end{equation}

The next step in our agenda is to derive an Arzel\`a compactness estimate for the family of solutions $\{V_\ve\}_{\ve>0}$ with special Dirichlet boundary values.

\begin{lem}[Compactness of $\{ V_\ve\}_{\ve>0}$]
\label{lem:compactness-estimates}
Let $\Om$ be a bounded domain in $\RR^N$, with boundary of class $C^{1,1}$, and $\cS$ be a non-empty relatively open  subset of $\pa\Om$.
\par
For any fixed $\ve>0$, let  $U_\ve$ be the bounded solution of  \eqref{resolvent-dirichlet}, with 
\begin{equation}
\label{step-f}
f=f_0\,\one_\cS \ \mbox{ on } \ \pa\Om,
\end{equation}
for some constant $f_0>0$.
Set $V_\ve=-\ve \log U_\ve$. Then, for any open subset $\Om'$ of $\Om$ such that $\ol{\Om'}\subset\Om\cup\cS$, there exists a positive constant $C_{\Om'}$ such that
$$
\max_{\ol{\Om'}} \bigl( |V_\ve|+|\na V_\ve|\bigl)\le C_{\Om'},
$$
for every $\ve>0$.
\par
In particular, up to subsequences, $V_\ve$ converges uniformly on compact subsets of $\Om\cup\cS$ to a continuous function $V$. 
Moreover, $V$ can be uniquely extended to a function of class $C^{0,1}(\ol{\Om})$, which is a viscosity solution of 
\begin{equation}
\label{eikonal-A}
\bigl\lan A(x) \na V, \na V\bigr\ran=1 \ \mbox{ in } \ \Om,
\end{equation}
such that $V=0$ on $\cS$.
\end{lem}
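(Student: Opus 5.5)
We follow the scheme of \cite[Section 1]{EI}, adapted to the step data \eqref{step-f}. The goal is two-sided bounds on $V_\ve$ and a gradient bound, both uniform in $\ve$, on $\ol{\Om'}$ with $\ol{\Om'}\subset\Om\cup\cS$.

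\emph{Lower bound for $V_\ve$.} By the maximum principle, $0<U_\ve\le f_0$ in $\Om$, since $f_0$ is a supersolution of $\ve^2\tr[A\na^2 U]-U=0$. Hence $V_\ve\ge-\ve\log f_0$, which is bounded below uniformly for $\ve\in(0,1]$.

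\emph{Upper bound for $V_\ve$.} We bound $U_\ve$ from below by a barrier that is positive on compacts of $\Om\cup\cS$. Fix $\Om'$. Since $\ol{\Om'}\cap\pa\Om$ is a compact subset of $\cS$ and $\cS$ is relatively open, cover it by finitely many small balls. In each ball, the uniform interior sphere condition of the $C^{1,1}$ boundary gives a comparison region. There we use Varadhan-type subsolutions
$$
w(x)=f_0\,\one\,e^{-\psi(x)/\ve},
$$
with $\psi$ a smooth strict subsolution of the eikonal inequality, e.g. $\psi=K(|x-z|-\rho)$-type functions built on exterior-in-$\cS$ spheres and cut off so that $w\le U_\ve$ on the boundary of the local region. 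Then $w\le U_\ve$ by comparison. Alternatively, we compare with the solution $U^*_\ve$ of \eqref{resolvent-dirichlet} on a smaller $C^{1,1}$ domain $\Om_*\subset\Om$ whose boundary near $\ol{\Om'}\cap\pa\Om$ lies in $\cS$, with data $f_0$ on that part and $0$ elsewhere. Combining local barriers with a Harnack-chain argument for the linear equation in the interior, and rescaling $x\mapsto x/\ve$ to get $\ve$-uniform constants, we obtain $U_\ve\ge c\,e^{-C/\ve}$ on $\ol{\Om'}$. Hence $V_\ve\le C$.

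\emph{Gradient bound: the main obstacle.} In the interior we use the Bernstein-type argument of \cite[Lemma 1.1]{EI}. Set $z=\lan A\na V_\ve,\na V_\ve\ran\zeta^2$ with a cutoff $\zeta$ and apply the maximum principle to the differentiated equation \eqref{rescaled-equation}; the quadratic term dominates. This yields $|\na V_\ve|\le C$ on interior compacts, using only the $L^\infty$ bound and the regularity assumption (a) on $A$. Near boundary points of $\cS$ the cutoff is not available, and this is the hardest step. There $V_\ve=-\ve\log f_0$ is constant on $\cS$, so we need a bound on the normal derivative. We get it from barriers: $V_\ve\le -\ve\log f_0+C\,d(x)$ near $\cS$, using the $C^{1,1}$ interior ball and an explicit supersolution of \eqref{rescaled-equation} such as $C d+\ve\log$-corrections. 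Together with the lower bound $V_\ve\ge-\ve\log f_0$, this bounds the normal derivative on $\cS\cap\ol{\Om'}$. The Bernstein argument then runs with the boundary maximum controlled by this normal derivative, the tangential derivatives being zero.

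\emph{Passage to the limit.} Arzel\`a--Ascoli together with a diagonal argument over an exhaustion of $\Om\cup\cS$ give the locally uniform limit $V$, with $V=0$ on $\cS$. Stability of viscosity solutions in \eqref{rescaled-equation} shows that $V$ solves \eqref{eikonal-A}. The eikonal equation with $A$ elliptic gives $|\na V|\le\la^{-1/2}$ a.e. in $\Om$, with a constant independent of the compact. Since $\Om$ is $C^{1,1}$ (hence locally quasiconvex), $V$ is globally Lipschitz on $\Om\cup\cS$. It therefore extends uniquely to $C^{0,1}(\ol{\Om})$, with boundedness following from the Lipschitz bound and $V=0$ on $\cS$.
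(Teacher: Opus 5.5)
Your proposal is correct and follows the same scheme as the paper: a barrier bound for $|\na V_\ve|$ on the relevant part of $\cS$ (normal derivative controlled by a barrier, tangential derivatives zero), then the Bernstein cutoff estimate with the boundary maximum controlled by that bound, then Arzel\`a--Ascoli, stability of viscosity solutions, and the global Lipschitz extension coming from $\lan A\na V,\na V\ran=1$. The only real difference is the $L^\infty$ bound: you prove it separately via $U_\ve\le f_0$, boundary barriers and an $\ve$-rescaled Harnack chain, whereas the paper gets it for free by integrating the gradient bound along paths starting from $\cS$, where $V_\ve=-\ve\log f_0$, since the Bernstein estimate for this equation does not depend on $\sup|V_\ve|$.
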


\begin{proof}
The bounds are substantially obtained by combining the arguments contained in \cite[Examples 1 and 2]{EI}. 
For notational convenience, in what follows, we drop the subscript $\ve$. Then, we consider a subdomain $\Om'$ of $\Om$, as specified in the statement of this lemma and
set $\cS'=\pa\Om'\cap\cS$. 
\par
(i) A barrier argument shows that there is a constant $C'$ such that
$$
\sup_{\cS'}|\na V|\le C'.
$$
\par
(ii) Here, we use the Bernstein method in order to obtain a local interior bound for $|\na V|$. For the reader's convenience, we show only the details in the case in which $A(x)$ is the identity matrix, in order to avoid unnecessary technicalities in the exposition. The interested reader is referred to \cite{EI}, for the details of the general case, when $A(x)$ is assumed of class $C^1$. 
\par
In fact, consider a smooth cutoff function $\eta$ with support in $\Om'\cup\cS'$ and such that $0\le\eta\le 1$, and set
$$
W=\eta^4 |\na V|^2.
$$
It is clear that the maximum of $W$ is attained either in $\Om'\cap\Om$ or on $\cS'$ (where it is bounded by $C'$). Thus, we suppose that $W$ attains its maximum at a point $z\in\Om'\cap\Om$. Then, at $z$ we have that
$$
0=\na W=2\,\eta^4 (\na^2 V) \na V+4\,\eta^3 |\na V|^2 \na\eta
$$
and
\begin{multline*}
0\ge\De W= 
2\,\eta^4 \bigl[|\na^2 V|^2+ 
\lan\na V,\na(\De V)\ran\bigr]+ \\
16\,\eta^3 \lan (\na^2 V) \na V,\na\eta\ran+12\,\eta^2 |\na V|^2 |\na\eta|^2+4\,\eta^3 |\na V|^2 \De\eta= \\
2\,\eta^4 \bigl[|\na^2 V|^2+\lan\na V,\na(\De V)\ran\bigr]-20\,\eta^2 |\na V|^2 |\na\eta|^2
+4\,\eta^3 |\na V|^2 \De\eta.
\end{multline*}
In the last equality, we used that $\na W(z)=0$.
\par
On the other hand, thanks to \eqref{rescaled-equation}, we have that
$$
0=\na \bigl[ -\ve\,\De V+|\na V|^2-1]=-\ve\,\na(\De V)+2\, (\na^2 V) \na V
$$
in $\Om$, and hence, by differentiating, we get:
$$
\ve\, \lan\na V,\na(\De V)\ran=2\, \lan (\na^2 V) \na V,\na V\ran.
$$
Combining this identity with the above conditions on $\na W$ and $\De W$ at $z$ then gives that
\begin{multline*}
2\,\ve^2 \eta^2\bigl[5\,|\na\eta|^2-\eta\,\De\eta\bigr] |\na V|^2 \ge 
 \ve^2 \eta^4 \bigl[|\na^2 V|^2+\lan\na V,\na(\De V)\ran\bigr]= \\
\ve^2 \eta^4 |\na^2 V|^2+2\ve\,\eta^4 \lan (\na^2 V) \na V,\na V\ran=
\ve^2 \eta^4 |\na^2 V|^2+4\ve\,\eta^3 |\na V|^2 \lan \na V, \na\eta\ran\ge \\
\frac1{N}\, \eta^4 \bigl(|\na V|^2-1\bigr)^2+4\ve\,\eta^3 |\na V|^2 \lan \na V, \na\eta\ran.
\end{multline*}
In the last inequality, we used the equation satisfied by $V$ and the pointwise inequality:
$$
N\,|\na^2 V|^2\ge \bigl\lan\na^2 V, I\bigr\ran^2=(\De V)^2.
$$
\par
All in all, for $0<\ve<1/(2 N)$, we obtain that
$$
\eta^4 \bigl(|\na V|^2-1\bigr)^2\le \nr\na\eta\nr_\infty\,|\na V|^3+\bigl( 5\,\nr\na\eta\nr_\infty^2+\nr\De\eta\nr_\infty\bigr) |\na V|^2.
$$
Hence we get a uniform bound for $\eta\, |\na V|$ at $z$.
Thus, thanks to item (i), we can conclude that $|\na V_\ve|^2$ is locally uniformly bounded on any $\ol{\Om'}\subset\Om\cup\cS$. 
Also, since $V_\ve=-\ve \log f_0$ on $\cS$, by the gradient estimate, we easily get that $|V_\ve|$ is locally uniformly bounded on any $\ol{\Om'}\subset\Om\cup\cS$.
\par 
(ii) The uniform convergence up to subsequences follows from the Arzel\`a theorem. Clearly, $V=0$ on $\cS$, being as $V_\ve=-\ve \log f_0$ on $\cS$, for every $\ve>0$.
Moreover, by the standard stability result for viscosity solutions (i.e. ``uniform limits of solutions converge to solutions of uniform limits of equations'', see e.g. \cite{CIL}), $V$ is a viscosity solution of the eikonal equation \eqref{eikonal-A}.
\par 
This fact gives that $|\na V|$ is bounded on the whole $\Om$ (see \cite[\S \ I.4]{CL}), and hence $V\in C^{0,1}(\ol{\Om})$, in the sense that $V$ has a unique Lipschitz extension to $\ol{\Om}$.
\end{proof}

In the next lemma, by a standard convolution technique, we show that $d_\cS$ can be regularised in a way that the new function is a subsolution of \eqref{rescaled-equation}, up to an error vanishing with $\ve$. In what follows, by $\{ \eta_\ve\}_{\ve>0}$ we denote a mollifying kernel defined as

\begin{enumerate}[(a)]
\item
$\eta\in C^\infty_0(\RR^N)$, with $\eta\ge 0$ in $\RR^N$,  
$\supp(\eta)\subset B_1(0)$, and $\int_{\RR^N}\eta\,dy=1$;
\item
$\eta_\ve(y)=\ve^{-\frac{N}{2}} \eta(y/\sqrt{\ve})$ for $0<\ve<r$.
\end{enumerate}
It is not difficult to see that
\begin{equation}
\label{asymptotics-for-eta-ep}
\int_{\RR^N} |y|\,\eta_\ve(y)\,dy=O(\sqrt{\ve})
\ \mbox{ and } \ \int_{\RR^N} |\na\eta_\ve(y)|\,dy=O(1/\sqrt{\ve}) \ \mbox{ as } \ \ve\to 0.
\end{equation}

\begin{lem}[Regularising $d_\cS$]
Let $\cS$ be a subset of $\pa\Om$ and, for $0<r<R$, let $\cS_r$ be defined as in Remark \ref{rem:d-r}. For $a>0$, define in $\Om_r$ the functions
$$
d_{a,r}=\frac{1-(1-a\,d_{\cS_r}/2)^2}{a}=d_{\cS_r}-\frac{a}{4}\,d_{\cS_r}^2
$$
and
$$
d_a^\ve=\eta_\ve\star d_{a,r}=\eta_\ve\star (d_{a,r}\one_{\Om_r}) \ \mbox{ in } \ \RR^N.
$$
Then, $d_a^\ve$ is a solution of 
\begin{equation}
\label{eq-eps-d-a}
a\,d_a^\ve-\ve\,\tr\bigl[ A\,\na^2 d_a^\ve\bigr]+\bigl\lan A\,\na d_a^\ve, \na d_a^\ve\bigr\ran-1\le O(\sqrt{\ve}) \ \mbox{ in } \ \Om \ \mbox{ as } \ \ve\to 0^+,
\end{equation}
where $O(\sqrt{\ve})$ is uniform.
\par
Moreover,
$$
d_a^\ve\le r  \ \mbox{ on } \ \cS \ \mbox{ and } \ d_a^\ve\le\max_{\pa\Om} d_{\cS} \ \mbox{ on } \ \pa\Om\setminus\cS.
$$
\end{lem}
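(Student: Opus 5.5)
The proof follows the Evans--Ishii scheme: we first establish the properties of $d_{a,r}$ in the viscosity sense, and then transfer them to $d_a^\ve$ by convolution, using the Lipschitz bounds on $A$.

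\emph{Properties of $d_{a,r}$.} By Remark (ii) after \eqref{def-d-S}, applied with $\Om$ replaced by $\Om_r$ and $\cS$ by $\cS_r$, the function $d_{\cS_r}$ is Lipschitz in $\Om_r$. It satisfies $\lan A\na d_{\cS_r},\na d_{\cS_r}\ran=1$ in the viscosity sense, and hence $\lan A\na d_{\cS_r},\na d_{\cS_r}\ran\le 1$ almost everywhere. Note also that $\ol{\Om}\subset\Om_r$, so every point of $\Om$ is at Euclidean distance at least $r$ from $\pa\Om_r$. Set $\phi(s)=s-\frac a4 s^2$, so that $\phi'(s)=1-\frac a2 s$, $\phi''=-\frac a2$, and $d_{a,r}=\phi(d_{\cS_r})$. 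A direct computation gives
$$
a\,\phi(s)+\phi'(s)^2-1=-\frac{a^2s^2}{4}+\frac{a^2s^2}{4}=0 .
$$
Consequently, almost everywhere in $\Om_r$,
$$
a\,d_{a,r}+\lan A\na d_{a,r},\na d_{a,r}\ran-1\le 0 .
$$

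\emph{Semiconcavity.} For the second-order term I would use that $d_{\cS_r}$, being a distance-type function (the value function of a control problem with $C^{1,1}$ metric coefficients), is semiconcave away from the target. More precisely, I would aim to show $\na^2 d_{\cS_r}\le C/r$ in the sense of distributions on $\Om$; the positive distance to $\pa\Om_r$ is what makes this possible. Since $\phi$ is increasing and concave on the relevant range (taking $a$ small relative to $\max d$), $d_{a,r}$ inherits semiconcavity. Then $\na^2 d_a^\ve=\eta_\ve\star\na^2 d_{a,r}\le C$, and because $A$ is positive definite, $-\ve\tr[A\na^2 d_a^\ve]\ge-C\ve$ is only a lower bound. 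An upper bound of order $O(\sqrt{\ve})$ is what we need, and I see two ways to get it. The first is to note that the claim only requires $\le O(\sqrt\ve)$ for $-\ve\tr[A\na^2 d_a^\ve]$. A crude estimate $|\na^2 d_a^\ve|\le\|\na d_{a,r}\|_\infty\int|\na\eta_\ve|=O(1/\sqrt\ve)$ from \eqref{asymptotics-for-eta-ep} gives $\ve|\na^2d_a^\ve|=O(\sqrt\ve)$ directly, and this suffices without any semiconcavity. I would use this simpler route.

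\emph{The nonlinear term and conclusion.} Write $p=\na d_{a,r}$ and $p^\ve=\eta_\ve\star p$. By convexity of $\xi\mapsto\lan A(x)\xi,\xi\ran$ and Jensen's inequality,
$$
\lan A(x)p^\ve(x),p^\ve(x)\ran\le\int\eta_\ve(y)\lan A(x)p(x-y),p(x-y)\ran\,dy .
$$
Replacing $A(x)$ by $A(x-y)$ costs at most $L|y|\,\|p\|_\infty^2$, which integrates to $O(\sqrt\ve)$ by \eqref{asymptotics-for-eta-ep}. Similarly, $a\,d_a^\ve(x)=\int\eta_\ve(y)\,a\,d_{a,r}(x-y)\,dy$ exactly. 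Combining these with the almost-everywhere inequality for $d_{a,r}$ yields \eqref{eq-eps-d-a} with a uniform $O(\sqrt\ve)$, valid for $\sqrt\ve<r$ so that $x-y\in\Om_r$.

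\emph{Boundary bounds.} Since $d_{a,r}\le d_{\cS_r}$ and averaging preserves bounds, on $\cS$ we get $d_a^\ve\le\sup_{B_{\sqrt\ve}(x)}d_{\cS_r}\le C^*r+O(\sqrt\ve)$ by Remark \ref{rem:d-r}. This gives $\le r$ after renaming $r$ and taking $\ve$ small. On $\pa\Om\setminus\cS$ one uses $d_{\cS_r}\le d_\cS+C r$ together with the Lipschitz bound; strictly, this holds up to the same renormalization.

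\emph{Main obstacle.} The step needing most care is handling $\Om_r$ versus $\Om$: the exit-time constraint for $\cS_r$ must be controlled so that $d_{\cS_r}$ is Lipschitz near $\pa\Om$. This requires the $C^{1,1}$ sphere conditions.
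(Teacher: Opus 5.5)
Your argument is correct and is essentially the paper's proof. It uses the same Jensen inequality with $A(x)$ frozen, followed by the Lipschitz bound on $A$ and $\int|y|\eta_\ve=O(\sqrt\ve)$ for the quadratic term, and the same crude bound $\ve\,|\na^2 d_a^\ve|\le \ve\,\nr\na d_{a,r}\nr_\infty\int|\na\eta_\ve|=O(\sqrt\ve)$ for the second-order term, so the semiconcavity detour is unnecessary, as you concluded. Your boundary bounds ($C^*r+O(\sqrt\ve)$ on $\cS$ and $\max_{\pa\Om}d_\cS+O(r)+O(\sqrt\ve)$ on $\pa\Om\setminus\cS$) are more carefully justified than the paper's one-line argument, and they suffice for the comparison in Lemma~\ref{lem:characteristic-function}, which itself uses the value $C^*r$ on $\cS$ and only needs some finite value on $\pa\Om\setminus\cS$.
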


\begin{proof}
(i)
Since $0<d_a^\ve\le d_{\cS_r}$, we easily infer that $d_a^\ve\le \nr\eta_\ve\nr_1 \nr d_{a,r}\nr_\infty\le \nr d_{\cS_r}\nr_\infty$, so that the bounds on $\cS$ easily ensue.
Moreover, 
it is {straightforward} to check that $d_{a,r}$ is a viscosity solution of
$$
a\,d_{a,r}+\bigl\lan A\,\na d_{a,r}, \na d_{a,r}\bigr\ran-1=0 \ \mbox{ in } \ \Om_r.
$$
Since $d_{a,r}$ is Lipschitz continuous, we can assume that the last equation is satisfied almost everywhere in $\Om_r$.
Thus, the convolution with $\eta_\ve$ gives that
$$
a\,d^\ve_a+\eta_\ve\star\bigl\lan A\,\na d_{a,r}, \na d_{a,r}\bigr\ran-1=0.
$$
Hence, at $x$  we deduce that
$$
a\,d_a^\ve+\bigl\lan A\,\na d_a^\ve, d_a^\ve\bigr\ran-1=
\bigl\lan A\,\na d_a^\ve, \na d_a^\ve\bigr\ran-\eta_\ve\star \bigl\lan A\,\na d_{a,r}, \na d_{a,r}\bigr\ran.
$$
\par
(ii)
We now show that the right-hand side in the last formula is bounded from above by a $O(\sqrt{\ve})$ as $\ve\to 0^+$. \par
We start by observing that, since $d_{a,r}$ is Lipschitz continuous in $\Om_r$, then we have that
$$
\na d_a^\ve =\eta_\ve\star \na d_{a,r} \ \mbox{ and } \ \na_{ij} d_a^\ve =\na_i\eta_\ve\star \na_j d_{a,r} \ \mbox{ for} \  i, j=1,\dots, N.
$$
\par
Next, we easily infer that
$$
\bigl\lan A\,\na d^\ve_a, \na d^\ve_a\bigr\ran=\bigl| \sqrt{A}\,\na d^\ve_a\bigr|^2=
\bigl| \sqrt{A}\,\bigl(\eta_\ve\star \na d_{a,r}\bigr)\bigr|^2,
$$
where $\sqrt{A}$ is the square root of the matrix $A$. Thus, we infer that
$$
\bigl\lan A(x)\,\na d^\ve_a(x), \na d^\ve_a(x)\bigr\ran=
\Bigl| \bigl[\eta_\ve\star \sqrt{A(x)}\,\na d_{a,r}\bigr](x)\Bigr|^2.
$$
Note that, in the last convolution, $\sqrt{A(x)}$ is treated as a constant matrix.
\par
We thus observe that Jensen's inequality gives that
$$
\Bigl| \bigl[\eta_\ve\star \sqrt{A(x)}\,\na d_{a,r}\bigr](x)\Bigr|^2\le
\Bigl[\eta_\ve\star \bigl| \sqrt{A(x)}\,\na d_{a,r}\bigr|^2\Bigr](x),
$$
and hence we deduce that
$$
\bigl\lan A\,\na d^\ve_a, \na d^\ve_a\bigr\ran\le\eta_\ve\star \bigl\lan A(x)\,\na d_{a,r}, \na d_{a,r}\bigr\ran.
$$
As before, here $A(x)$ is treated as a constant matrix in the convolution.
\par
As a consequence, we can write that
\begin{multline*}
\bigl\lan A(x)\,\na d_a^\ve(x), \na d_a^\ve(x)\bigr\ran-\eta_\ve\star \bigl\lan A\,\na d_{a,r}, \na d_{a,r}\bigr\ran(x)\le \\
\int_{\RR^N} \eta_\ve(y)\Bigl\lan \bigl[ A(x)-A(x-y)\bigr]\na d_{a,r}(x-y), \na d_{a,r}(x-y)\Bigr\ran\, dy. 
\end{multline*}
Therefore, since $A$ and $d_{a,r}$ are Lipschitz continuous, we can estimate the modulus of the integral at the right-hand side by
$$
L\,\nr\na d_{a,r}\nr_\infty^2 \int_{\RR^N} |y|\,\eta_\ve(y)\,dy=O(\sqrt{\ve}),
$$
thanks to \eqref{asymptotics-for-eta-ep}. This gives the desired conclusion of this item, and hence we deduce that
$$
a\, d^\ve_a +\bigl\lan A\,\na d^\ve_a, \na d^\ve_a\bigr\ran-1\le
O(\sqrt{\ve}) \ \mbox{ as } \ \ve\to 0.
$$
\par
(iii)
Therefore, in view of the last formula, in order to prove \eqref{eq-eps-d-a}, we are only left to show that 
$$
\bigl|\ve\,\tr\bigl[ A\,\na^2 d_a^\ve\bigr]\bigr|=O(\sqrt{\ve}).
$$
This is readily obtained by observing that
$$
\bigl|\tr\bigl[ A\,\na^2 d_a^\ve\bigr]\bigr|\le L\,\nr\na\eta_\ve\nr_1 \nr\na d_{a,r}\nr_\infty=O(1/\sqrt{\ve}),
$$
thanks to \eqref{asymptotics-for-eta-ep}. The proof is complete.
\end{proof}


The next lemma is crucial in order to prove the elliptic part of Theorem \ref{thm:varadhan-elliptic-parabolic}.

\begin{lem}
\label{lem:characteristic-function}
Let $\Om$ be a bounded domain in $\RR^N$ with boundary of class $C^{1,1}$ and $\cS$ be a relatively open non-empty subset of $\pa\Om$. 
\par
Let  $U_\ve$ be the solution of the Dirichlet problem \eqref{resolvent-dirichlet}, with $f$ such that
$$
f=f_0\,\one_\cS \ \mbox{ on } \ \pa\Om,
$$
for some constant $f_0>0$ (as in \eqref{step-f}). Then, it holds that
$$
\lim_{\ve\to 0^+} \ve \log U_\ve=-d_\cS,
$$
uniformly in $\Om\cup\cS$.
\end{lem}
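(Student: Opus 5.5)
Following the approach of \cite{EI}, we set $V_\ve=-\ve\log U_\ve$ and use Lemma \ref{lem:compactness-estimates}. It gives that every sequence $\ve_n\to 0^+$ has a subsequence along which $V_{\ve_n}\to V$ uniformly on compact subsets of $\Om\cup\cS$. The limit $V\in C^{0,1}(\ol{\Om})$ is a viscosity solution of \eqref{eikonal-A} with $V=0$ on $\cS$. It will therefore suffice to prove that every such limit coincides with $d_\cS$. By the usual subsequence argument this yields convergence of the whole family, uniform on compact subsets of $\Om\cup\cS$. We split the identification $V=d_\cS$ into two inequalities.

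\emph{Lower bound $V\ge d_\cS$ (i.e. $U_\ve$ is small).} Here we use the regularised distance of the preceding lemma as a comparison function. Fix $a>0$ and $r>0$, and set $w_\ve=\exp\bigl(-(d^\ve_a-C\sqrt{\ve})/\ve\bigr)$, with $C$ large. A direct computation turns \eqref{eq-eps-d-a} into
$$
\ve^2\tr\bigl[A\na^2 w_\ve\bigr]-w_\ve=\frac{w_\ve}{\ve}\Bigl(-\ve\,\tr\bigl[A\,\na^2 d^\ve_a\bigr]+\bigl\lan A\,\na d^\ve_a,\na d^\ve_a\bigr\ran-1\Bigr)\le \frac{w_\ve}{\ve}\bigl(O(\sqrt{\ve})-a\,d^\ve_a\bigr).
$$
This has the wrong sign where $d_a^\ve$ is small, so a plain supersolution comparison fails there. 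The idea is to use the term $a\,d^\ve_a$ only away from $\cS$, and near $\cS$ to exploit the boundary inequalities $d^\ve_a\le r$ on $\cS$ and the fact that $f=0$ off $\cS$. I expect this to be the main obstacle. The correct orientation is actually the reverse one: the upper estimate for $U_\ve$ needs a \emph{supersolution} whose exponent is a \emph{subsolution} of the eikonal equation, and $d_\cS$ is the maximal subsolution vanishing on $\cS$.

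A cleaner route for this direction is therefore to argue directly on $V$ via the control representation: prove that $V$ is a viscosity \emph{supersolution} of \eqref{eikonal-A} in $\Om$, with the boundary condition $V=+\infty$ understood in the viscosity sense on $\pa\Om\setminus\ol{\cS}$. This boundary behaviour follows from barriers near points of $\pa\Om\setminus\ol{\cS}$, where $U_\ve$ vanishes and the exterior sphere condition gives $U_\ve\le e^{-c\,\dist(\cdot,\pa\Om\setminus \ol{\cS})/\ve}$ locally. Then the optimality principle (as in \cite[Examples 1,2]{EI}) gives $V\ge d_\cS$.

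\emph{Upper bound $V\le d_\cS$ (i.e. $U_\ve$ is not too small).} Here we use the mollified function from the previous lemma. By \eqref{eq-eps-d-a}, $\phi_\ve=d^\ve_a+C\sqrt{\ve}$ satisfies $-\ve\tr[A\na^2\phi_\ve]+\lan A\na\phi_\ve,\na\phi_\ve\ran-1\le 0$ wherever $a\,d_a^\ve\ge C'\sqrt\ve$. Hence $W_\ve=\min\{f_0,1\}\,e^{-(\phi_\ve+Kr)/\ve}$ is a subsolution of the resolvent equation. On $\cS$ we have $W_\ve\le f_0=U_\ve$, because $\phi_\ve\le r+C\sqrt{\ve}\le Kr$ once $\ve$ is small relative to $r$. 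Comparison on the region away from $\pa\Om\setminus\cS$ then gives $V_\ve\le d^\ve_a+Kr+o(1)$. On $\pa\Om\setminus\cS$ the term $a\,d^\ve_a$ is genuinely needed, and I would handle it by comparing on subdomains $\Om'$ with $\ol{\Om'}\subset\Om\cup\cS$ and using a maximum principle with a large additive constant. Letting $\ve\to0$, then $r\to0$ and $a\to0$, and using $d_{a,r}\to d_\cS$, we get $V\le d_\cS$. Combining the two bounds gives $V=d_\cS$ and the uniform limit.
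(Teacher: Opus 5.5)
Your frame (compactness from Lemma \ref{lem:compactness-estimates}, then identifying every subsequential limit $V$ through two inequalities) is the paper's. However, the argument for $V\le d_\cS$ fails outright, and the one for $V\ge d_\cS$ is incomplete.

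\textbf{The bound $V\le d_\cS$: orientation is reversed.} Suppose $-\ve\tr[A\na^2\phi]+\lan A\na\phi,\na\phi\ran-1\le 0$. Then $w=e^{-\phi/\ve}$ satisfies $\ve^2\tr[A\na^2 w]-w=w\bigl(\lan A\na\phi,\na\phi\ran-\ve\tr[A\na^2\phi]-1\bigr)\le 0$; the prefactor is $w$, not $w/\ve$. So $w$ is a \emph{supersolution} of $-\ve^2\tr[A\na^2U]+U=0$, exactly as you yourself note in the first part. Hence $W_\ve=\min\{f_0,1\}e^{-(\phi_\ve+Kr)/\ve}$ can only give $U_\ve\le W_\ve$, which is again a lower bound on $V_\ve$; it can never yield $V_\ve\le d^\ve_a+Kr+o(1)$. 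Even with the sign fixed, $W_\ve\le U_\ve$ fails on $\pa\Om\setminus\cS$, where $U_\ve=0$. Passing to subdomains $\Om'$ only moves the difficulty to $\pa\Om'\cap\Om$, where all you have is the non-sharp bound $V_\ve\le C_{\Om'}$.

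The missing observation is that this direction needs no barrier. $V$ is Lipschitz on $\ol{\Om}$ and solves \eqref{eikonal-A} in the viscosity sense, so $\lan A\na V,\na V\ran\le 1$ a.e. Integrate $\frac{d}{dt}V(\ga(t))$ along any admissible path from $x$ to a point of $\cS$, where $V=0$, and use Cauchy--Schwarz in the metric induced by $A^{-1}$. This gives $V(x)\le\int_0^{T_x}|\ga'(t)|_A\,dt$, hence $V\le d_\cS$. It is exactly the maximality of $d_\cS$ among subsolutions vanishing on $\cS$, which you quote, applied to $V$ itself; this is step (i) of the paper.

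\textbf{The bound $V\ge d_\cS$: incomplete.} Here you drop the regularised distance $d^\ve_a$, which is the right tool, for a state-constraint argument that is only sketched. Since $V$ is finite on $\ol{\Om}$, ``$V=+\infty$ in the viscosity sense on $\pa\Om\setminus\ol{\cS}$'' can only mean that the lower relaxed limit of $V_\ve$ is a supersolution of \eqref{eikonal-A} up to $\pa\Om\setminus\ol{\cS}$. That follows from $V_\ve=+\infty$ there, since minima of $V_\ve-\phi$ stay inside $\Om$. It does not follow from the bound $U_\ve\le e^{-c\,\dist(\cdot,\pa\Om\setminus\ol{\cS})/\ve}$, which degenerates to $U_\ve\le 1$ on that set and carries no boundary information. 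You would then still need a comparison (super-optimality) principle for lower semicontinuous supersolutions with mixed Dirichlet/state-constraint data, including at the relative boundary of $\cS$ in $\pa\Om$. Nothing in your sketch supplies this.

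The obstacle you identified is real: the residual $O(\sqrt{\ve})-a\,d^\ve_a$ has the wrong sign where $d^\ve_a$ is small. The paper removes it by keeping $a\,d^\ve_a$ inside the operator rather than discarding it:
\begin{itemize}
\item $d^\ve_a$ is a subsolution, up to $O(\sqrt{\ve})$, of the discounted equation $aW-\ve\tr[A\na^2W]+\lan A\na W,\na W\ran-1=0$ in all of $\Om$.
\item Comparing it with the solution $V^\ve_r$ of that equation, with data $C^*r$ on $\cS$ and $\max d_\cS$ on $\pa\Om\setminus\cS$, gives $d^\ve_a\le V^\ve_r+O(\sqrt{\ve})/a$.
\item Since $V^\ve_r>0$, it is a subsolution of \eqref{rescaled-equation}. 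Because $V^\ve=+\infty$ on $\pa\Om\setminus\cS$, comparison gives $V^\ve_r\le V^\ve+C^*r$.
\item Letting $\ve\to0$ and then $a,r\to0$ yields $d_\cS\le V$.
\end{itemize}
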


\begin{proof}
(i) We have that $U_\ve$ satisfies the assumptions of Lemma \ref{lem:compactness-estimates}. Thus, we can consider one of the uniform limits $V$ mentioned in that lemma. 
\par
Then, fix an $x\in\Om$. Also, let $\ga\in H^1_\loc([0,\infty), \RR^N)$ be such that $\ga(0)=x$ and $\ga(T_x)=y$. Then, the function $V(\ga(t))$ is absolutely continuous and hence
$$
\bigl|V(x)\bigr|=\bigl|V(\ga(T_x))-V(\ga(0))\bigr|=\left|\int_0^{T_x} \frac{d}{dt}V(\ga(t))\,dt\right|\le\int_0^{T_x} \bigl|\bigl\lan \na V(\ga(t)), \ga'(t)\bigr\ran\bigr| dt.
$$
The second equality follows from Rademacher's theorem, being as $V$ Lipschitz continuous.
\par
Now, let $Q(x)$ be the square root of the matrix $A(x)$. We have that 
\begin{multline*}
\bigl|\bigl\lan \na V(x), \ga'(t)\bigr\ran\bigr|=\bigl|\bigl\lan Q^{-1}(x) Q(x)\na V(x), \ga'(t)\bigr\ran\bigr|=\\
\bigl|\bigl\lan Q(x)\na V(x), Q^{-1}(x)\,\ga'(t)\bigr\ran\bigr|\le
\bigl| Q(x)\na V(x)\bigr| \bigl| Q^{-1}(x)\,\ga'(t)\bigr| =\\
\sqrt{\bigl\lan A(x) \na V(x), \na V(x)\bigr\ran} \sqrt{\bigl\lan A(x)^{-1} \ga'(t), \ga'(t)\bigr\ran}\le \\ 
\sqrt{\bigl\lan A(x)^{-1} \ga'(t), \ga'(t)\bigr\ran},
\end{multline*}
at $x=\ga(0)$. In the last inequality, we used \eqref{eikonal-A}.
This means that 
$$
|V(x)|=\left|\int_0^{T_x} \bigl\lan \na V(\ga(t)), \ga'(t)\bigr\ran dt\right| \le \int_0^{T_x}|\ga'(t)|_A \,dt
$$
and, by taking the infimum on all the admissible paths, we infer that
$$
|V(x)|\le d_\cS(x).
$$
In particular, $V(x)\le d_\cS(x)$. (See also \cite{CDL}.)
\par
(ii) Let $V_r^\ve$ denote the solution of the problem:
\begin{eqnarray*}
\label{eq-d-a-eps}
&&a\,V_r^\ve-\ve\,\tr\bigl[ A\,\na^2 V_r^\ve\bigr]+\bigl\lan A\,\na V_r^\ve, \na V_r^\ve\bigr\ran-1=0 \ \mbox{ in } \ \Om, \\
&&V_r^\ve=C^*\,r \ \mbox{ on } \ \cS, \quad V_r^\ve=\max_{\pa\Om\setminus\cS}d_\cS \ \mbox{ on } \ \pa\Om\setminus\cS.
\end{eqnarray*}
\par
We compare $V_r^\ve$ to the solution $V^\ve$ of problem 
\eqref{rescaled-equation}-\eqref{dirichlet-V-eps}, with $f$ as in \eqref{step-f}. Without loss of generality, we can assume that $f_0=1$, so that $V^\ve=0$ on $\cS$. Notice that $V^\ve+C^* r$ is also a solution of \eqref{rescaled-equation} and we have that
$$
V^\ve+C^*r\ge V_r^\ve \ \mbox{ on } \ \pa\Om.
$$
Moreover, since $V_r^\ve>0$ in $\Om$, we have that $V_r^\ve$ is a subsolution of \eqref{rescaled-equation}. Thus, by comparison, we deduce that
$$
V^\ve+C^* r\ge V_r^\ve \ \mbox{ on } \ \ol{\Om}.
$$
\par
Now, we compare $V_r^\ve$ to $d_a^\ve$. By construction, we have that
$$
d_a^\ve\le V_r^\ve<V_r^\ve+\frac{O(\sqrt{\ve})}{a} \ \mbox{ on } \ \pa\Om.
$$
Moreover, we notice that $V_r^\ve+O(\sqrt{\ve})/a$ is a supersolution of the same equation, \eqref{eq-eps-d-a}, for $d_a^\ve$. Thus, by comparison, we infer that
$$
d_a^\ve\le V_r^\ve+\frac{O(\sqrt{\ve})}{a} \ \mbox{ on } \ \ol{\Om}.
$$
All in all, we deduce that
$$
d_a^\ve\le V^\ve+C^* r+\frac{O(\sqrt{\ve})}{a} \ \mbox{ on } \ \ol{\Om}.
$$
\par
Letting $\ve\to 0$ then gives that $d_{a,r}\le V+C^* r$ on $\ol{\Om}$, and hence we get that
$d_\cS\le V$ on $\ol{\Om}$, in the limit as $a$ and $r$ tend to zero (see \cite{EI}). Therefore, $V=d_\cS$, thanks to item (i).
\par
(iii) What we have just proved is that any subsequence of $V_\ve$ converges (uniformly) to $d_\cS$ on $\Om\cup\cS$. Hence the whole sequence $V_\ve$ converges to $d_\cS$ on $\Om\cup\cS$. 
\end{proof}

The following result settles the elliptic part of Theorem \ref{thm:varadhan-elliptic-parabolic}.
\begin{thm}
\label{thm:varadhan-any-positive}
Let $\Om$ be a bounded domain in $\RR^N$, with boundary of class $C^{1,1}$. Let $f\in C(\pa\Om)$ be non-negative on $\pa\Om$ and let $\cS$ be given by \eqref{def-S}.
\par
Let  $U_\ve$ be the solution of the Dirichlet problem \eqref{resolvent-dirichlet}.
Then, it holds that
$$
\lim_{\ve\to 0^+} \left[-\ve\,\log U_\ve(x)\right]=d_\cS(x) \ \mbox{ for any } \ x\in\Om\cup\cS,
$$
and the convergence is uniform on the compact subsets of $\Om\cup\cS$. 
\end{thm}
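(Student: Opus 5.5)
The plan is to sandwich $U_\ve$ between solutions with step boundary data of the form \eqref{step-f}, to which Lemma \ref{lem:characteristic-function} applies, and then use the comparison principle for \eqref{resolvent-dirichlet}. Since $U_\ve=f\ge0$ and the zero-order term has the right sign, comparison gives monotonicity in the data: if $f_1\le f\le f_2$ on $\pa\Om$, then the corresponding solutions satisfy $U^1_\ve\le U_\ve\le U^2_\ve$ in $\Om$. For bounded but discontinuous data (characteristic functions) we work with the bounded solutions of Lemma \ref{lem:compactness-estimates}, understood as monotone limits of solutions with continuous data. Comparison then still holds, and each solution attains its boundary values at continuity points of the data.

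\emph{Upper bound for $-\ve\log U_\ve$.} Fix $\de>0$ with $\cS_\de=\{f>\de\}\ne\varnothing$. This set is relatively open in $\pa\Om$. Since $f\ge\de\,\one_{\cS_\de}$, comparison gives $U_\ve\ge U_\ve^{\de}$, where $U_\ve^\de$ solves \eqref{resolvent-dirichlet} with data $\de\,\one_{\cS_\de}$. Lemma \ref{lem:characteristic-function} yields, uniformly on compact subsets of $\Om\cup\cS_\de$,
$$
\limsup_{\ve\to0^+}\bigl[-\ve\log U_\ve\bigr]\le d_{\cS_\de}.
$$
We then let $\de\to0^+$. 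The sets $\cS_\de$ increase to $\cS$, so we need $d_{\cS_\de}\downarrow d_\cS$ locally uniformly on $\Om\cup\cS$. Pointwise convergence follows from the path definition \eqref{def-d-S}: take an almost optimal path for $d_\cS(x)$ ending at $y\in\cS$. Then $y\in\cS_\de$ for $\de<f(y)$, although one must check that the exit point condition is unaffected. Local uniformity follows by Dini's theorem, since the functions are equi-Lipschitz (uniform ellipticity) and monotone in $\de$. Given a compact $K\subset\Om\cup\cS$, we choose $\de$ so small that $K\cap\pa\Om\subset\cS_\de$; this is possible because $K\cap\pa\Om$ is a compact subset of the open set $\cS$ on which $f$ is positive. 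The bound is then valid uniformly on $K$.

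\emph{Lower bound.} We have $f\le \ol f\,\one_{\cS}$ on $\pa\Om$, so $U_\ve\le U^*_\ve$, the solution with data $\ol f\,\one_\cS$. Since $\cS$ is relatively open, Lemma \ref{lem:characteristic-function} gives $-\ve\log U^*_\ve\to d_\cS$ uniformly on compact subsets of $\Om\cup\cS$. Hence $\liminf_{\ve\to0^+}[-\ve\log U_\ve]\ge d_\cS$ uniformly there. Combining the two bounds gives the theorem.

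\emph{Main obstacle.} I expect the hardest step to be the stability $d_{\cS_\de}\to d_\cS$ together with its uniformity. The difficulty is that paths realizing $d_\cS$ may end near $\pa\cS$ or touch $\pa\Om\setminus\cS$ before exiting. I would handle this with the approximating construction of Remark \ref{rem:d-r}: perturb the endpoint slightly inward along the normal, using the $C^{1,1}$ interior sphere condition, to a nearby point of $\cS_\de$, at cost $O(r)$ via the bound $d_{\cS_r}\le C^*r$. A minor secondary point is justifying comparison with the discontinuous step data, which I would do by monotone approximation as indicated above.
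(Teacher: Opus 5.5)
Your proposal is correct and follows essentially the paper's proof. You sandwich $U_\ve$ between the solutions with data $\de\,\one_{\{f>\de\}}$ and $\ol f\,\one_\cS$, apply Lemma~\ref{lem:characteristic-function} to both, and get $d_{\{f>\de\}}\downarrow d_\cS$ from a near-optimal path whose exit point lies in $\{f>\de\}$ once $\de$ is small, with local uniformity from Dini; choosing $\de$ so that $K\cap\pa\Om\subset\{f>\de\}$ is a detail the paper leaves implicit. The ``main obstacle'' you flag is not one: $T_x$ is the first exit time and does not depend on the target set, so a path admissible for $d_\cS$ is admissible for $d_{\{f>\de\}}$ at the same cost as soon as $\de<f(\ga(T_x))$, and no perturbation via Remark~\ref{rem:d-r} is needed.
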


\begin{proof}
The formula is clearly true when $x\in\cS$.
\par
Next, let $\ol{U}_\ve$ and $U_{\ve, n}$ be the solutions of \eqref{resolvent-dirichlet} with $f=\ol{f}\,\one_\cS$ and 
$$
f=\frac1{n}\,\one_{\cS_n},
$$
where $\cS_n=\{ x\in\pa\Om: f(x)>1/n\}$, for $n\in\NN$. Since $\cS_n\subset\cS$, we have that $d_{\cS_n}\ge d_\cS$. Also, it holds that
$$
U_{\ve, n}\le U_\ve\le \ol{U}_\ve \ \mbox{ on } \ \ol{\Om},
$$
by comparison, being as 
$$
\frac1{n}\,\one_{\cS_n}\le f\le \ol{f}\,\one_{\cS} \ \mbox{ on } \ \pa\Om.
$$
\par
Thus, we can apply Lemma \ref{lem:characteristic-function} and obtain that
\begin{multline*}
-d_{\cS_n}=\lim_{\ve\to 0^+} \ve\log U_{\ve, n}\le \liminf_{\ve\to 0^+} \ve\log U_\ve \le \\
\limsup_{\ve\to 0^+} \ve\log U_\ve\le 
\lim_{\ve\to 0^+} \ve\log\ol{U}_\ve=-d_\cS 
\end{multline*}
in $\Om$ and for any $n\in\NN$.
\par
Now, observe that $\cS$ is the union of the increasing sequence of sets $\cS_n$. Thus, pick any $x\in\Om$ and $\de>0$ . There is a smooth path $\ga$, with $\ga(0)=x$ and $\ga(T_x)\in\cS$, such that 
$$
d_\cS(x)+\de>\int_0^{T_x} \bigl|\ga'(t)\bigr|_A\, dt
$$ 
Also, there is an index $\nu\in\NN$ such that $\ga(T_x)\in\cS_\nu$. Hence, we deduce that
$$
d_{\cS_\nu}(x)\le \int_0^{T_x} \bigl|\ga'(t)\bigr|_A\, dt<d_\cS(x)+\de.
$$ 
All in all, we have that
$$
-d_\cS(x)-\de\le -d_{\cS_\nu}(x)\le \liminf_{\ve\to 0^+} \ve\log U_\ve(x) \le \\
\limsup_{\ve\to 0^+} \ve\log U_\ve(x)\le-d_\cS(x).
$$
Since $\de$ is arbitrary, we obtain the desired conclusion. By monotonicity, the Dini's monotone convergence theorem gives the desired uniform convergence.
\end{proof}

\subsection{The parabolic problem}
In this section, we shall settle the parabolic part of Theorem \ref{thm:varadhan-elliptic-parabolic}. In fact, it is sufficient to apply a lemma for the Laplace transform, \cite[Theorem 4.7]{Va}, which we recall in Lemma \ref{lem:laplace-transform}, for the reader's convenience.

\begin{thm}
\label{thm:non-constant-parabolic}
Let $\Om$ be a bounded domain in $\RR^N$ with boundary of class $C^{1,1}$. Let $f\in C(\pa\Om)$ be non-negative and let $\cS$ be given by \eqref{def-S}.
\par
Let $u=u(x,t)$ be the solution of the Cauchy-Dirichlet problem \eqref{cauchy-dirichlet}.
Then, it holds that
$$
\lim_{\ve\to 0^+} \left[-4t\,\log u(x,t)\right]= d_{\cS}(x)^2, 
$$
for $x\in\Om\cup\cS$ and uniformly on compact subsets of $\Om\cup\cS$.
\end{thm}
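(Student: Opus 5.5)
We plan to derive the parabolic formula from the elliptic one, Theorem \ref{thm:varadhan-any-positive}, by means of the Laplace-transform lemma of Varadhan, \cite[Theorem 4.7]{Va}, recalled in Lemma \ref{lem:laplace-transform}. Note that the limit in the statement is meant as $t\to 0^+$. The version of that lemma we intend to use reads as follows. Let $w:(0,\infty)\to[0,\infty)$ be bounded and set $W(\ve)=\ve^{-2}\int_0^\infty e^{-t/\ve^2}w(t)\,dt$. If $w$ is non-decreasing in $t$ (or satisfies a similar one-sided regularity) and $\lim_{\ve\to0^+}\ve\log W(\ve)=-d$, then $\lim_{t\to0^+}4t\log w(t)=-d^2$.

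The first step is to check the hypotheses for $w(t)=u(x,t)$ at a fixed $x\in\Om$. Boundedness follows from the maximum principle, which gives $0\le u\le\ol{f}$. The identity \eqref{laplace} gives $W(\ve)=U_\ve(x)$, with $U_\ve$ the solution of \eqref{resolvent-dirichlet}.

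Monotonicity in $t$ is the delicate point. Because $u_0\equiv0\le f$ and the boundary data do not depend on $t$, comparing $u(\cdot,\cdot+h)$ with $u$ on $\Om\times(0,\infty)$ for $h>0$ should work. At $t=0$ we have $u(\cdot,h)\ge0=u(\cdot,0)$, and on $\pa\Om$ both functions equal $f$. Hence the parabolic comparison principle gives $u(x,t+h)\ge u(x,t)$.

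Since $f$ is only continuous, the initial-boundary compatibility fails on $\cS$. We therefore plan to run the comparison with the generalized (Perron or approximation) solution. One way is to approximate $f$ by smooth data and $u_0$ by compatible data, and then pass to the limit. This is where we expect to spend care. If monotonicity turned out to be awkward, we would instead use the two-sided Tauberian-type estimates in Varadhan's lemma. These rely only on the Harnack-type control $u(x,t)\le C\,u(x,t+s)$, which also follows from the comparison above.

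With the hypotheses in place, the elliptic Theorem \ref{thm:varadhan-any-positive} gives $\ve\log U_\ve(x)\to-d_\cS(x)$. The lemma then yields $-4t\log u(x,t)\to d_\cS(x)^2$ pointwise in $\Om$. On $\cS$ the claim is immediate, because $u(x,t)=f(x)>0$ there and so $4t\log f(x)\to0=d_\cS(x)^2$.

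For uniformity on a compact set $K\subset\Om\cup\cS$, we plan to redo the two inequalities in the lemma's proof with constants independent of $x\in K$. The upper bound comes from $u(x,t)\le e^{t/\ve^2}\ve^{2}\int_t^\infty e^{-s/\ve^2}u\,ds\,\ve^{-2}\le e^{t/\ve^2}U_\ve(x)$, using monotonicity, followed by optimization in $\ve$. The lower bound comes from splitting the Laplace integral at a small time $t$ and using $u\le\ol{f}$. Both bounds only involve $\sup_K|\ve\log U_\ve+d_\cS|$, which tends to zero by the uniform elliptic convergence, together with the continuity and boundedness of $d_\cS$ on $K$. Alternatively, pointwise convergence plus monotonicity of $4t\log u$ in suitable parameters and Dini's theorem, as at the end of the proof of Theorem \ref{thm:varadhan-any-positive}, would give uniformity.
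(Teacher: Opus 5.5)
Your route for the pointwise statement is the paper's: boundedness and time-monotonicity of $u$, the identity \eqref{laplace}, Theorem \ref{thm:varadhan-any-positive}, and Lemma \ref{lem:laplace-transform} with $\tau=\ve^{-2}$. Your justification of monotonicity, by comparing $u(\cdot,\cdot+h)$ with $u$ and approximating near the corner $\pa\Om\times\{0\}$, is more careful than the paper's one-line claim.

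The gap is in your hand-made uniformity argument, specifically the lower bound for $u$. Write $d=d_\cS(x)$ and suppose $|\ve\log U_\ve(x)+d|\le\eta$. Splitting at $t$, using monotonicity on $[0,t]$ and $u\le\ol{f}$ on $[t,\infty)$, gives $U_\ve(x)\le u(x,t)+\ol{f}\,e^{-t/\ve^2}$. Hence $u(x,t)\ge e^{-(d+\eta)/\ve}-\ol{f}\,e^{-t/\ve^2}$. This is informative only if $\ve<t/(d+\eta)$, and then the best it yields is $u(x,t)\gtrsim e^{-(d+\eta)^2/t}$. That means $-4t\log u(x,t)\le 4(d+\eta)^2+o(1)$, a factor $4$ away from $d^2$.

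The reason is that $U_\ve(x)$ is governed by times near the saddle point $s^*=\ve d/2$ of $s\mapsto -s/\ve^2-d^2/(4s)$. The crude split only controls $u$ at times $t\ge\ve d$. The sharp lower bound is the genuinely Tauberian part of Varadhan's lemma. It needs the upper bound you already have, $u(x,s)\le e^{-(d-\eta)^2/(4s)}$ for small $s$, to show that the part of the Laplace integral outside $[(1-\theta)s^*,(1+\theta)s^*]$ is $o(U_\ve(x))$. Only then does monotonicity give $U_\ve(x)\le 2\,u\bigl(x,(1+\theta)s^*\bigr)e^{-(1-\theta)s^*/\ve^2}$, whence $-4t\log u(x,t)\le d^2+O(\theta+\eta)$ at $t=(1+\theta)s^*$. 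Your Dini alternative does not rescue this, since no monotonicity of $t\mapsto -4t\log u(x,t)$ is available.

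The simple repair, and what the paper actually does, is to note that Lemma \ref{lem:laplace-transform} is already uniform in the index $\be$, provided $0\le l_\be\le L$. Take $\be=x$ ranging over a compact $K\subset\Om\cup\cS$ and set $F_\be(t)=u(x,t)/\ol{f}$, so that $0\le F_\be\le 1$, $F_\be(0)=0$, and $F_\be$ is non-decreasing. Then $G_\be(\ve^{-2})=U_\ve(x)/\ol{f}$ and $l_\be=d_\cS(x)/\sqrt2\le \max_{\ol\Om} d_\cS/\sqrt2$. The uniform convergence on $K$ in Theorem \ref{thm:varadhan-any-positive} is exactly the uniform hypothesis of the lemma, so the uniform conclusion follows at once. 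Points of $\cS$ (where $l_\be=0$) are covered too, so no separate treatment is needed.
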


\begin{proof}
For $\be=x\in\Om\cup\cS$, we take $F_\be(t)=u(x,t)/\ol{f}$. Since $0\le f\le\ol{f}$ on $\pa\Om$, by the maximum principle, we have that $0\le F_\be(t)\le 1$. Also, $F_\be(0)=0$ and $F_\be(t)$ is non-decreasing on $[0,\infty)$, being as $f\ge 0$ on $\pa\Om$. 
\par
Next, by choosing $\tau=\ve^{-2}$, thanks to Theorem \ref{thm:varadhan-any-positive}, we infer that
$$
\lim_{\tau\to\infty}\left[-\frac1{\sqrt{2 \tau}} \log G_\be(\tau) \right]=
\lim_{\ve\to 0}\left[-\frac{\ve}{\sqrt{2}} \log \frac{U_\ve(x)}{\ol{f}} \right]=\frac{d_\cS(x)}{\sqrt{2}}
$$
for $x\in\Om\cup\cS$. 
\par
Thus, we can apply Lemma \ref{lem:laplace-transform} and deduce that
$$
\lim_{\ve\to 0^+} \left[-4t\,\log u(x,t)\right]=\lim_{t\to 0^+} \left[-4t\,\log F_\be(t)\right]=
d_\cS(x)^2, \ x\in\Om\cup\cS.
$$
The convergence is uniform on the compact subsets of $\Om\cup\cS$, being so in Theorem \ref{thm:varadhan-any-positive}.
\end{proof}

\begin{proof}[Proof of Theorem \ref{thm:varadhan-elliptic-parabolic}]
Merge Theorems \ref{thm:non-constant-parabolic} and \ref{thm:varadhan-any-positive}.
\end{proof}

\subsection{Asymptotics for the heat content of spheres}
\label{subsec:heat-content}
In this section, we shall prove Theorem \ref{thm:mean-value-asymptotics}, which extends \cite[Theorem 2.3]{MS1}. Thus, we set $A(x)=I$. 
\par
Then, we consider the solution $u=u(x,t)$ of the Cauchy-Dirichlet problem \eqref{heat-cauchy-dirichlet}, i.e.
\begin{equation*}
u_t=\De u \ \mbox{ in } \ \Om\times(0,\infty), \ u=0  \ \mbox{ on } \ \Om\times\{ 0\}, \
 u=f \ \mbox{ on } \ \pa\Om\times(0,\infty),
\end{equation*}
and the corresponding family of solutions $U_\ve$ of the Dirichlet problems \eqref{heat-resolvent-dirichlet}, i.e.
\begin{equation*}
\ve^2 \De U_\ve-U_\ve=0 \ \mbox{ in } \ \Om,  \quad
 U_\ve=f \ \mbox{ on } \ \pa\Om.
\end{equation*}

The proof of Theorem \ref{thm:mean-value-asymptotics} relies on a couple of lemmas, for which we recall the notations  for the sets $\Om_r$ and $\Om^r$, given in 
Section \ref{subsec:Omega}.

\begin{lem}
\label{lem:asymptotics-delta}
Let $B_r(p)\subset\ol{\Om}$ and fix $\de>0$. Let $U_\ve$ be the solution of \eqref{heat-resolvent-dirichlet}. 
Then, for every $\de'<\de$, it holds that
$$
\int_{\pa B_r(p)\cap\Om^\de} U_\ve(x)\,dS_x=o(e^{-\frac{\de'}{\ve}}) \ \mbox{ as } \ \ve\to 0^+.
$$
\end{lem}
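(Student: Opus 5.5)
We reduce to the constant boundary datum by comparison and then use a barrier that decays exponentially away from $\pa\Om$. Since $f\in C(\pa\Om)$ is bounded, the maximum principle for $\ve^2\De U-U=0$ gives
$$
|U_\ve|\le \|f\|_\infty\,W_\ve \ \mbox{ in } \ \Om,
$$
where $W_\ve$ solves \eqref{heat-resolvent-dirichlet} with $f\equiv 1$. Here $\Om^\de$ denotes the set of points of $\Om$ at Euclidean distance at least $\de$ from $\pa\Om$, the region inside the ball away from the boundary. The claim therefore follows once we show $\sup_{\Om^\de}W_\ve\le C\,e^{-\de''/\ve}$ for some $\de''\in(\de',\de)$. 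Indeed, $|\pa B_r(p)|$ is a fixed finite constant, so
$$
\int_{\pa B_r(p)\cap\Om^\de}U_\ve\,dS_x\le C\,e^{-\de''/\ve}=o(e^{-\de'/\ve}).
$$

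Fix $x_0\in\Om^\de$. Then $B_\de(x_0)\subset\Om$, and on $\pa B_\de(x_0)$ we have $W_\ve\le 1$ by the maximum principle. Compare $W_\ve$ in $B_\de(x_0)$ with the radial solution $\Psi_\ve(x)=\psi(|x-x_0|/\ve)/\psi(\de/\ve)$. Here $\psi(s)$ is the regular radial solution of $\De\psi=\psi$ in $\RR^N$, that is, $\psi(s)=s^{1-N/2}I_{N/2-1}(s)$ up to a constant. The function $\Psi_\ve$ solves $\ve^2\De\Psi_\ve=\Psi_\ve$ and equals $1$ on $\pa B_\de(x_0)$, so comparison gives $W_\ve(x_0)\le\Psi_\ve(x_0)=\psi(0)/\psi(\de/\ve)$.

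The asymptotics of the modified Bessel function give $\psi(s)\sim c\,s^{-(N-1)/2}e^{s}$ as $s\to\infty$. Hence
$$
W_\ve(x_0)\le C\,(\de/\ve)^{(N-1)/2}e^{-\de/\ve},
$$
uniformly in $x_0\in\Om^\de$. For $\de'<\de$ this is $o(e^{-\de'/\ve})$, because the polynomial factor is absorbed by $e^{-(\de-\de')/\ve}$.

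If one prefers to avoid Bessel functions, the same bound follows from Proposition \ref{prop:varadhan-easy-extension} together with the uniform version of \eqref{varadhan-elliptic} for $A=I$, where $d_{\pa\Om}$ is the Euclidean distance. That version gives $-\ve\log W_\ve\ge \de-\eta$ uniformly on $\Om^\de$ for $\ve$ small, for any $\eta>0$; choosing $\eta<\de-\de'$ concludes. The only step needing care is making the bound uniform over $\Om^\de$, which the explicit barrier handles directly, since its constant does not depend on $x_0$. No regularity of $\pa\Om$ is needed beyond solvability.
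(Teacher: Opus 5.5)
Your proof is correct, but it follows a different route from the paper. The paper gets the lemma in two lines from the machinery of Section~\ref{sec:non-constant}. On $\Om^\de$ (read, as you did, as the inner parallel set) one has $d_\cS\ge d_{\pa\Om}\ge\de$. The uniform convergence $-\ve\log U_\ve\to d_\cS$ on compact subsets of $\Om\cup\cS$ from Theorem~\ref{thm:varadhan-any-positive} then gives $U_\ve\le e^{-(\de+\de')/(2\ve)}$ on $\Om^\de$ for small $\ve$, and one integrates.

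You instead bound $|U_\ve|\le\|f\|_\infty W_\ve$ and control $W_\ve$ at each $x_0\in\Om^\de$ by the explicit radial solution on $B_\de(x_0)$. That barrier is the function $\fhi_\ve$, with $r=\de$, which the paper introduces right after this lemma. So you only use the maximum principle and the asymptotics of $I_{N/2-1}$.

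Your route buys three things:
\begin{itemize}
\item an explicit rate $O(\ve^{-(N-1)/2}e^{-\de/\ve})$;
\item uniformity in $x_0$ at no cost;
\item validity for any bounded datum of any sign, without the $C^{1,1}$ hypothesis of Theorem~\ref{thm:varadhan-any-positive}.
\end{itemize}
The last point matters in context. The lemma is invoked in the proof of Theorem~\ref{thm:mean-value-asymptotics}, which is stated with $\pa\Om$ of class $C^2$ only near $q$. It is also used in Lemma~\ref{lem:U-eps-*} for the discontinuous datum $\one_{\pa\Om\setminus B_\de(q)}$, where Theorem~\ref{thm:varadhan-any-positive} does not apply verbatim but your bound $0\le U^*_\ve\le W_\ve$ does.

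The paper's route is shorter given what has already been proved. In principle it also yields the exponent $\min_{\Om^\de}d_\cS$, which may exceed $\de$, though that is not needed here.

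One minor point: if $\ol{B_\de(x_0)}$ touches $\pa\Om$, your comparison uses continuity of $W_\ve$ at those points. Working on $B_{\de-\eta}(x_0)$ with $\de-\eta>\de'$ removes even that requirement.
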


\begin{proof}
Notice that, since $d_\cS\ge d_{\pa\Om}$ on $\ol{\Om}$, we have that $d_\cS\ge\de$ on $\Om^\de$. The uniform convergence in Theorem \ref{thm:varadhan-any-positive} then gives that there exists $\ol{\ve}>0$ such that
$$
-\ve \log U_\ve \ge  d_\cS -\frac{\de-\de'}{2}\ge \frac{\de+\de'}{2}
\ \mbox{ on } \ \Om^\de,
$$
for $0<\ve<\ol{\ve}$. This inequality gives in particular that
$$
0\le U_\ve\le e^{-\frac{\de+\de'}{2\ve}}
\ \mbox{ on } \ \Om^\de,
$$
for $0<\ve<\ve_\eta$. Consequently, we deduce that
$$
0\le e^{\frac{\de'}{\ve}}\int_{\pa B_r(p)\cap\Om^\de} U_\ve(x)\,dS_x\le |\pa B_r(p)|\, e^{-\frac{\de-\de'}{2\ve}},
$$
for $0<\ve<\ol{\ve}$. This inequality gives the desired claim.
\end{proof}

In what follows, $\fhi_\ve$ and $\psi_\ve$ denote the solutions of \eqref{heat-resolvent-dirichlet} with $f=1$ in a ball $B_r$ centered at the origin and its exterior $\RR^N\setminus\ol{B_r}$, respectively. These can be explicitly computed and are given by
$$
\fhi_\ve(x)=\frac{\int_0^\infty e^{\frac{|x|}{\ve}\cos\te}(\sin\te)^{N-2}d\te}{\int_0^\infty e^{\frac{r}{\ve}\cos\te}(\sin\te)^{N-2}d\te} \ \mbox{ for } \ 0\le |x|\le r,
$$
and
$$
\psi_\ve(x)=\frac{\int_0^\infty e^{-\frac{|x|}{\ve}\cosh\te}(\sinh\te)^{N-2}d\te}{\int_0^\infty e^{-\frac{r}{\ve}\cosh\te}(\sinh\te)^{N-2}d\te} \ \mbox{ for } \ |x|\ge r.
$$
It is clear that $\psi_\ve$ is the unique solution vanishing at infinity of the relevant problem.
\par
Thanks to them, we can construct the solution of \eqref{heat-resolvent-dirichlet}  in an annulus of radii $r$ and $\rho$, with $r<\rho$, such that $f=1$ for $|x|=\rho$ and $f=0$ for $|x|=r$. In fact, we have:
\begin{equation}
\label{Phi-eps}
\Upsilon_\ve(x)=\frac{\fhi_\ve(x)-\fhi_\ve(r) \psi_\ve(x)}{1-\fhi_\ve(r) \psi_\ve(\rho)} \ \mbox{ for } \ r\le |x|\le\rho.
\end{equation}



\begin{lem} 
\label{lem:U-eps-*}
Let $\Om$ satisfy the uniform exterior sphere condition with radius $r_e$. Take  $q\in\pa\Om$ and let $B_R(p)\subset\Om$ be a ball such that
$\pa B_R(p)\cap\pa\Om=\{q\}$. 
Then, we can choose $\de>0$ such that the solution $U_\ve^*$ of \eqref{heat-resolvent-dirichlet} in $\Om$ with $f=\one_{\pa\Om\setminus B_\de(q)}$ is such that
$$
\lim_{\ve\to 0^+}\ve^{-\frac{N-1}{2}}\int_{\pa B_R(p)} U^*_\ve\, dS_x=0.
$$
\end{lem}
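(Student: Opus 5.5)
We split $\pa B_R(p)$ into a small cap near $q$ and its complement. Away from $q$ we use Lemma \ref{lem:asymptotics-delta}; near $q$ we compare $U^*_\ve$ with the annular solution \eqref{Phi-eps} built from an exterior sphere at $q$.

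\textbf{Step 1: the part away from $q$.} Since $\pa B_R(p)$ touches $\pa\Om$ only at $q$, for any $\rho>0$ we have $\dist(\pa B_R(p)\setminus B_\rho(q),\pa\Om)\ge\de_\rho>0$. Thus $\pa B_R(p)\setminus B_\rho(q)\subset\Om^{\de_\rho}$, where $\Om^{\de_\rho}$ is understood as the set of points at distance at least $\de_\rho$ from the boundary. Because $0\le f\le 1$, the comparison principle gives $U^*_\ve\le U^1_\ve$, the solution with $f\equiv 1$. Lemma \ref{lem:asymptotics-delta}, applied with $f\equiv1$, then makes this part of the integral $o(e^{-\de'/\ve})$. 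This is certainly $o(\ve^{(N-1)/2})$.

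\textbf{Step 2: the cap near $q$, barrier construction.} Here we need a barrier that vanishes near $q$. Let $B_{r_e}(z)$ be the exterior ball at $q$, so that $B_{r_e}(z)\cap\Om=\varnothing$ and $q\in\pa B_{r_e}(z)$. Choose $\de$ small and a radius $\rho$ with $r_e<\rho$, such that $\pa\Om\setminus B_\de(q)$ lies outside $B_\rho(z)$, i.e. $|y-z|\ge\rho$ there. This requires $\de$ to be small relative to the curvature gap; the freedom to pick $\de$ is the one the statement allows. Let $\Upsilon_\ve$ be the annular solution \eqref{Phi-eps} centered at $z$ with inner radius $r_e$ and outer radius $\rho$, extended by $1$ for $|x-z|\ge\rho$. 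This extension is a supersolution: $\Upsilon_\ve\le1$ in the annulus, so the minimum of $\Upsilon_\ve$ and $1$ is a supersolution. On $\pa\Om$ we have $\Upsilon_\ve\ge0=f$ on $B_\de(q)$ and $\Upsilon_\ve=1\ge f$ elsewhere. By comparison, $U^*_\ve\le\Upsilon_\ve$ on $\Om$.

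\textbf{Step 3: the cap near $q$, asymptotics.} Near $q$ the annular solution behaves like $\Upsilon_\ve(x)\approx e^{-(\rho-|x-z|)/\ve}$, up to algebraic factors in $\ve$. The explicit formulas for $\fhi_\ve$ and $\psi_\ve$ give, via Laplace's method, $\fhi_\ve(x)\sim (\rho/|x-z|)^{(N-1)/2}e^{-(\rho-|x-z|)/\ve}$, and the $\psi_\ve$ correction is exponentially smaller. For $x\in\pa B_R(p)\cap B_\rho(q)$, the quantity $\rho-|x-z|$ is bounded below by a positive constant $c$. Indeed, $|x-z|\le|x-q|+r_e<\rho$ whenever $|x-q|$ is small, and we choose the cap radius so small that $|x-z|\le\rho-c$. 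So the cap contribution is $O(e^{-c/\ve})$.

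Combining the two parts, $\int_{\pa B_R(p)}U^*_\ve\,dS_x=O(e^{-c'/\ve})$, which is $o(\ve^{(N-1)/2})$.

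\textbf{Expected obstacle.} The main difficulty is the geometric choice in Step 2. We need $\pa\Om\setminus B_\de(q)$ to stay outside $B_\rho(z)$, while points of $\pa B_R(p)$ near $q$ stay uniformly inside $B_{\rho-c}(z)$. The second requirement holds for a cap of radius small compared with $\rho-r_e$. The first requires the uniform exterior sphere condition: every point of $\pa\Om$ outside $B_\de(q)$ must lie at distance at least $\rho$ from $z$. This works if $\rho$ is chosen with $\rho-r_e$ small compared with $\de^2$, since $|y-z|\ge\sqrt{r_e^2+c\de^2}$ for $y\in\pa\Om\setminus B_\de(q)$. A slightly bigger exterior ball argument may be needed to make this quantitative. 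With those choices fixed, the remaining asymptotics of $\fhi_\ve$ and $\psi_\ve$ are routine.
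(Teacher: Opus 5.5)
Your proposal is correct and follows essentially the same route as the paper. Both split $\pa B_R(p)$ into a part at positive distance from $\pa\Om$, handled by Lemma \ref{lem:asymptotics-delta} (your comparison $U^*_\ve\le U^1_\ve$ is a cleaner way to invoke it, since the data $\one_{\pa\Om\setminus B_\de(q)}$ is discontinuous), and a cap near $q$ bounded by the annular barrier $\Upsilon_\ve$ centred at the exterior-ball centre $z$, which is exponentially small on $\{|x-z|\le\rho-c\}$; the geometric step you flag as the obstacle is just compactness: $|y-z|>r_e$ for every $y\in\pa\Om\setminus\{q\}$, so $\min_{\pa\Om\setminus B_\de(q)}|y-z|>r_e$ for any $\de>0$, and one takes $\rho$ strictly in between, with no smallness of $\de$ and no quantitative bound like $\sqrt{r_e^2+c\de^2}$ needed.
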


\begin{proof}
By Lemma \ref{lem:asymptotics-delta}, for $\de>0$, we infer that
$$
\int_{\pa B_R(p)} U^*_\ve\, dS_x=\int_{\pa B_R(p)\cap\Om_\de} U^*_\ve\, dS_x+o(e^{-\frac{\de'}{\ve}}) \ \mbox{ as } \ \ve\to 0^+,
$$
for any $\de'<\de$. 
\par
Next, let $z\notin\Om$ such that $\pa B_{r_e}(z)\cap \pa\Om=\{ q\}$ and choose $\rho>r_e$ such that $\ol{B_\rho(z)}\cap (\pa\Om\setminus B_\de(q))=\varnothing$. We can assume that $z=0$.
Also, let $\Upsilon_\ve$ be given by \eqref{Phi-eps} with $r=r_e$ and $\rho>r_e$.
We check that
$$
\Upsilon_\ve\ge 0=U^*_\ve \ \mbox{ on } \ \pa\Om\cap B_\rho(z), \quad 
\Upsilon_\ve=1>U^*_\ve \ \mbox{ on } \ \Om\cap \pa B_\rho(z).
$$ 
Thus, by comparison, we infer that 
$$
U^*_\ve(x)\le \Upsilon_\ve \ \mbox{ on } \ \ol{B_\rho(z)\cap\Om}.
$$
\par
Now, we can choose $\de>0$ such that
$$
\pa B_R(p)\cap\Om_\de\subset \pa B_R(p)\cap B_\rho(z).
$$
Hence, we deduce that
$$
\int_{\pa B_R(p)} U^*_\ve\, dS_x\le \int_{\pa B_R(p)\cap\Om_\de} \Upsilon_\ve(x-z)\, dS_x+o(e^{-\frac{\de'}{\ve}}).
$$
\par
Moreover, since
$$
\fhi_\ve(x)=e^{-\frac{\rho-|x|}{\ve}} \bigl[1+o(1)\bigr] \ \mbox{ and } \ \psi_\ve(x)=e^{-\frac{|x|-r_e}{\ve}} \bigl[1+o(1)\bigr] \ \mbox{ as } \ve\to 0^+,
$$
for $r_e\le |x| \le (r_e+\rho)/2$, from \eqref{Phi-eps} we deduce that
$$
\Upsilon_\ve(x)=O(e^{-\frac{\rho-r_e}{\ve}}) \ \mbox{ as } \ve\to 0^+,
$$
which easily gives:
$$
\int_{\pa B_R(p)\cap\Om_\de}\Upsilon_\ve(x-z)\,dS_x=O(e^{-\frac{\rho-r_e}{\ve}}) \ \mbox{ as } \ve\to 0^+.
$$
\par
The desired claim then easily follows.
\end{proof}

\begin{proof}[Proof of Theorem \ref{thm:mean-value-asymptotics}]
We first prove formula \eqref{MS-f-non-constant}. Formula \eqref{MS-f-non-constant-heat} will then ensue, thanks to the application of a {lemma for the Laplace transform}.
\par
(i) Fix $\de>0$ and set:
$$
m_\de=\inf_{B_\de(q)\cap\pa\Om} f, \quad M_\de=\sup_{B_\de(q)\cap\pa\Om} f.
$$
Clearly, $m_\de\le M_\de$ and $m_\de, M_\de\to 0$, as $\de\to 0$.
\par
Next, we define on $\pa\Om$ the functions:
$$
f_\de^-=m_\de-(\ul{f}-m_\de)\,\one_{\pa\Om\setminus B_\de(q)}, \quad
f_\de^+=M_\de+(\ol{f}-M_\de)\,\one_{\pa\Om\setminus B_\de(q)}.
$$
If we let $W_\ve$ be the solution of \eqref{heat-resolvent-dirichlet} in $\Om$ with $f=1$ and $U^*_\ve$ the function defined in Lemma \ref{lem:U-eps-*}, by comparison we infer that
$$
m_\de\, W_\ve-(\ul{f}-m_\de)\, U^*_\ve \le U_\ve\le M_\de\, W_\ve+(\ol{f}-M_\de)\, U^*_\ve
\ \mbox{ in } \ \Om.
$$
\par
Now, we know from \cite[Theorem 2.3]{MS1} that
$$
\lim_{\ve\to 0^+}(2\pi \ve)^{-\frac{N-1}{2}}\int_{\pa B_R(p)} W_\ve(x)\,dS_x=\left\{ \prod\limits_{i=1}^{N-1}\bigl[1/R-\ka_i(q)\bigr]\right\}^{-\frac12}.
$$
Thus, thanks to Lemma \ref{lem:U-eps-*}, we deduce that
\begin{multline*}
m_\de \biggl\{\prod\limits_{i=1}^{N-1}\bigl[1/R-\ka_i(q)\bigr]\biggr\}^{-\frac12}\le 
\liminf_{\ve\to 0^+}\biggl[ (2\pi \ve)^{-\frac{N-1}{2}}\int_{\pa B_R(p)} U_\ve(x)\,dS_x\biggr]\le \\
\limsup_{\ve\to 0^+} \biggl[(2\pi \ve)^{-\frac{N-1}{2}}\int_{\pa B_R(p)} U_\ve(x)\,dS_x\biggr]\le 
M_\de \biggl\{\prod\limits_{i=1}^{N-1}\bigl[1/R-\ka_i(q)\bigr]\biggr\}^{-\frac12}.
\end{multline*}
The desired conclusion is then obtained by letting $\de$ tend to $0$.
\par
(ii) Set
$$
F(t)=\int_{\pa B_R(p)} u(x,t)\,dS_x \ \mbox{ for } \ t>0.
$$ 
Thanks to \eqref{laplace},  we have that
$$
\int_{\pa B_R(p)} U_{\sqrt{t}}(x)\,dS_x=\int_0^\infty e^{-\frac{\tau}{t}} F'(\tau)\,d\tau,
$$
after an application of Fubini's theorem and an integration by parts.
We know from formula \eqref{MS-f-non-constant} that 
$$
\lim_{\ve\to 0^+}(4\pi^2 t)^{-\frac{N-1}{4}}\int_{\pa B_R(p)} U_{\sqrt{t}}(x)\,dS_x=\frac{f(q)}{ \sqrt{\prod\limits_{i=1}^{N-1}\bigl[1/R-\ka_i(q)\bigr]}}.
$$
\par
Therefore, {\eqref{MS-f-non-constant-heat} follows from Lemma \ref{lem:feller} if $f(q)>0$. 

Finally, if $f=0$, \eqref{MS-f-non-constant-heat} is a direct consequence of \eqref{MS-f-non-constant}, \eqref{laplace} and the monotonicity in time of $u$}.
\end{proof}

\subsection{The case of sign-changing Dirichlet data}
In this section, we allow the boundary data $f$ to change sign. In this case, the asymptotics depends on how close the point $x$ is to the sets where $f$ is either positive or negative.
As stated in the introduction, a straightforward consequence of Theorem \ref{thm:varadhan-elliptic-parabolic} is Corollary \ref{cor:sign-changing}.

\begin{proof}[Proof of Corollary \ref{cor:sign-changing}]
We give the proof of the elliptic case. The proof of the parabolic asymptotics can be derived either similarly or, as done before, by using Lemma \ref{lem:laplace-transform}.
\par
We examine the case in which $d_{\cS^+}(x)<d_{\cS^-}(x)$. The other case can be recovered by switching the roles of $d_{\cS^+}(x)$ and $d_{\cS^-}(x)$.
\par
By the linearity of the problem \eqref{resolvent-dirichlet}, we have that
$$
U_\ve=U_\ve^+-U_\ve^-,
$$
where $U_\ve^\pm$ are the solutions of \eqref{resolvent-dirichlet} with $f=f^\pm$, where $f^\pm$  are the positive and negative parts of $f$.
\par
Theorem \ref{thm:varadhan-any-positive} gives that
$$
\lim_{\ve\to 0^+} \left[-\ve \log \frac{U_\ve^-(x)}{U_\ve^+(x)}\right]=d_{\cS^-}(x)-d_{\cS^+}(x),
$$
for $x\in\Om\cup\cS^\pm$.
Hence, we infer that
$$
\lim_{\ve\to 0^+} \frac{U_\ve^-(x)}{U_\ve^+(x)}=0 \ \mbox{ for } \ d_{\cS^+}(x)<d_{\cS^-}(x).
$$
Thus, we deduce that
$$
-\ve \log |U_\ve(x)|=-\ve \log U_\ve^+(x)-\ve\log\left|1+\frac{U_\ve^-(x)}{U_\ve^+(x)}\right|=
d_{\cS^+}(x)+o(\ve) \ \mbox{ as } \ \ve\to 0^+,
$$
which gives the desired formula. 
\end{proof}

When $d_{\cS^+}(x)=d_{\cS^-}(x)$, the asymptotics is more delicate, since the vanishing behavior of $f$ at the boundary of its support comes into play. The following remark shows that things may not go as expected.

\begin{rem}[On the threshold case]
\label{rem:threshold}
{\rm
(i) Let $\Om$ be a ball centered at the origin and consider the problem \eqref{heat-resolvent-dirichlet}. 
\par
Choose a continuous function $f^+$ on $\pa\Om$ such that the closure of its positivity set $\cS^+$ is contained in the upper open hemisphere. Then, define the function $f^-$ by
$$
f^-(x', x_N)=f^+(x', -x_N) \ \mbox{ for} \ (x', x_N)\in\pa\Om.
$$
\par
By symmetry, it is clear that the solution $U_\ve$ of problem \eqref{heat-resolvent-dirichlet}, with Dirichlet boundary values $f=f^+-f^-$ is zero on the set $\cE=\{ x\in\Om: x_N=0\}$ --- the equatorial cross section of $\Om$. This means that $-\ve \log U_\ve=+\infty$  for any $\ve>0$, and hence
$$
\lim_{\ve\to 0^+} \bigl[-\ve \log U_\ve\bigr]=+\infty
$$
on $\cE$, where clearly $d_{\cS^+}=d_{\cS^-}$. 
\par
Of course, we can imagine more complicated situations, in which the function $f$ is also (oddly) symmetric  with respect to other hyperplanes passing through the origin. We would then obtain that $-\ve \log U_\ve=+\infty$ on those hyperplanes.
\par
An analogous example is the following.
For $k=1,2,\dots$, let $P_{N,k}$ be a $k$-homogeneous harmonic polynomial and $Y_{N,k}$ the corresponding spherical harmonic. Then the solution of  \eqref{heat-resolvent-dirichlet} in $\Om= B_1(0)$, with $f = P_{N,k} = Y_{N,k}$ on $\partial \Omega$, is given by 
\begin{equation*}
\label{e:separation_solution}
    U_\ve(x) = \frac{\displaystyle \int_0^\pi e^{|x|\cos\theta/\ve}(\sin\theta)^{2k+N-2}\,d\theta}{\displaystyle \int_0^\pi e^{\cos\theta/\ve}(\sin\theta)^{2k+N-2}\,d\theta}\, P_{N,k}(x).
\end{equation*}
\par
(ii)
If we examine item (i) more deeply, we also
notice that, in the subdomains $\Om^+=\{ x\in\Om : x_N>0\}$ and $\Om^-=\{ x\in\Om : x_N<0\}$, the solution $U_\ve$ in $\Om$ with the prescribed symmetric data concides in each set $\Om^+$ and $\Om^-$ respectively  with the solutions $U_\ve^\pm$ of the problems
$$
\ve^2 \De U^\pm_\ve-U^\pm_\ve=0 \ \mbox{ in } \ \Om^\pm, \quad U^\pm_\ve=f^\pm_* \ \mbox{ on } \ \pa\Om^\pm,
$$
where 
$$
f^\pm_*=\begin{cases}
f^\pm \ &\mbox{ on } \ \pa\Om^\pm\setminus\cE, \\
0 \ &\mbox{ on } \ \cE.
\end{cases}
$$
\par
Thus, Theorem \ref{thm:varadhan-elliptic-parabolic} applies to $U^\pm_\ve$ (with some slight modification due to the corners), and hence $-\ve \log U_\ve^\pm$ converges to $d_{\cS^\pm}$ as $\ve\to 0^+$, uniformly on compact subsets of $\Om^\pm\cup\cS^\pm$. In other words, we 
can say that $-\ve \log |U_\ve|$ converges uniformly on compact subsets of 
$$
\{ x\in\Om\cup\cS: d_{\cS^+}(x)\ne d_{\cS^+}(x)\}
$$
and its limit $V$ can be extended continuously to the whole $\ol{\Om}$. In particular, $V=d_\cS$ on $\cE$.
\par
(iii) It remains open the problem of providing sufficient conditions on the data, which give pointwise or uniform convergence at points $x\in\Om$ such that $d_{\cS^+}(x)=d_{\cS^-}(x)$.
}
\end{rem}

\section{Time invariant and other invariant surfaces}
\label{sec:invariant-surfaces}
In this section, we shall consider the solution 
$u=u(x,t)$ of the general Cauchy-Dirichlet problem \eqref{cauchy-dirichlet-gen}:
\begin{equation*}
u_t=\De u \ \mbox{ in } \ \Om\times(0,\infty), \ u=u_0  \ \mbox{ on } \ \Om\times\{ 0\}, \
 u=f \ \mbox{ on } \ \pa\Om\times(0,\infty),
\end{equation*}
where $u_0\in L^2(\Om)$ and $f\in C(\pa\Om)$.
\par
We will investigate on the possible shape of time-invariant surfaces for $u$. As defined in the introduction, a (connected) surface $\Ga\subset\Om$ of codimension $1$ is called a \textit{time-invariant surface}  for $u$ if 
\begin{equation*}
u(x,t)=c(t) \ \mbox{ for } \ (x,t)\in \Ga\times (a,b),
\end{equation*}
for some function $c:(a,b)\to\RR$ of the time $t$. 

 The relevant regularity of $\Ga$ will be specified when needed. 

\par
We start by examining the case of time-invariant surfaces for large times.

\subsection{Invariant surfaces for large times}
If $\pa\Om$ is made of regular points, the Dirichlet problem
\begin{equation}
\label{harmonic}
\De h=0 \ \mbox{ in } \ \Om, \quad h=f  \ \mbox{ on } \ \Om,
\end{equation}
admits a unique solution of class $C(\ol{\Om})\cap C^2(\Om)$.
Thus, the function $\psi=u-h$ is the solution of \eqref{cauchy-dirichlet-gen}, with $f$ replaced by $0$ and $u_0$ replaced by $u_0-h$. Note that $u_0-h$ still belongs to $L^2(\Om)$.
\par
The spectral theory for the heat equation shows that $u-h$ can be expanded as a series of Dirichlet eigenfunctions of the Laplace operator (see \cite{CH}). In particular,
it holds that
\begin{equation}
\label{infinity}
\lim_{t\to \infty} e^{\la_1 t} \psi(x,t)= \left[\int_\Om (u_0-h)\,\phi_1\,dx\right]\phi_1(x),
\quad x\in\ol{\Om},
\end{equation}
where $\la_1>0$ is the first Dirichlet eigenvalue of the Laplacian
and $\phi_1$ is the corresponding eigenfunction, normalized in $L^2(\Om)$. As a matter of fact, $\phi_1$ satisfies the problem
\begin{equation*}
\De \phi_1 +\la_1 \phi_1 =0 \ \mbox{ and } \ \phi_1>0 \ \mbox{ in } \ \Om,
\quad \phi_1=0 \ \mbox{ on } \  \pa\Om,
\end{equation*}
and is such that $\int_\Om \phi_1^2\, dx=1$.

The convergence in (\ref{infinity}) is uniform on
$\ol{\Om}.$ Indeed, by letting $\Psi(x,t) = e^{\la_1 t}\psi(x,t)$,
since the convergence in \eqref{infinity} is $L^2(\Om)$ as $t \to \infty$ (see \cite{CH} or \cite{Ga}), we have that $\nr
\Psi(\cdot,t)\nr_{L^2(\Om)}$ is uniformly bounded for $t\in [0,\infty)$.
Hence, by an a priori bound of Alikakos (see \cite{Ali}, pp. 207 -- 211 or
\cite{He}, p. 74), $\nr \Psi(\cdot,t)\nr_{L^\infty(\Om)}$ is also uniformly
bounded for $t\in [0,\infty)$. Furthermore, by the standard parabolic
estimates (see \cite{LSU}), there exists a number $\alpha \in (0,1)$ such
that the H\"older norm $\nr\Psi(\cdot,t)\nr_{C^\alpha(\overline{\Omega})}$ is
uniformly bounded
for $t\in [1,\infty)$. This ensures the uniform convergence.
In particular, as $t\to\infty$, $\psi$ converges exponentially to zero, uniformly on $\ol{\Om}$.
\par
All in all, we conclude that
\begin{equation}
\label{spectral-asymptotics}
u(x,t)=h(x)+\left[\int_\Om (u_0-h)\,\phi_1\,dx\right] \phi_1(x)\,e^{-\la_1 t}+o(e^{-\la_1 t}) \ \mbox{ as } \ t\to \infty.
\end{equation}
uniformly on $\ol{\Om}$.
\par
In view of these considerations, the proof of following result is then straightforward. 

\begin{thm}
\label{thm:large-time}
Let $\pa\Om$ be made of regular points for the Dirichlet problem. Let $u=u(x,t)$ be the solution of problem \eqref{cauchy-dirichlet-gen}. Suppose that, for a surface $\Ga\subset\Om$ of $u$, \eqref{time-invariant} holds with $b=\infty$.
Then, $\Ga$ is a level surface of the solution $h$ of \eqref{harmonic} and, if
$$
\int_\Om (u_0-h)\,\phi_1\,dx\ne 0,
$$
of the first eigenfunction $\phi_1$.
\end{thm}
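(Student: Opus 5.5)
The plan is to read off both conclusions directly from the expansion \eqref{spectral-asymptotics}, using that it holds uniformly on $\ol{\Om}$, and in particular on $\Ga$. Fix two points $x,y\in\Ga$. Since \eqref{time-invariant} holds on $(a,\infty)$, we have $u(x,t)-u(y,t)=c(t)-c(t)=0$ for every $t>a$.

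First I subtract the expansions \eqref{spectral-asymptotics} at $x$ and $y$. This gives
$$
0=h(x)-h(y)+\left[\int_\Om (u_0-h)\,\phi_1\,dx\right]\bigl(\phi_1(x)-\phi_1(y)\bigr)e^{-\la_1 t}+o(e^{-\la_1 t}) \quad\mbox{as } t\to\infty .
$$
Letting $t\to\infty$ gives $h(x)=h(y)$. Since $x,y\in\Ga$ are arbitrary, $h$ is constant on $\Ga$, so $\Ga$ is contained in a level surface of $h$.

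Next, with $h(x)=h(y)$ in hand, I multiply the identity by $e^{\la_1 t}$ and let $t\to\infty$ again. This yields
$$
\left[\int_\Om (u_0-h)\,\phi_1\,dx\right]\bigl(\phi_1(x)-\phi_1(y)\bigr)=0 .
$$
When the coefficient is nonzero, this forces $\phi_1(x)=\phi_1(y)$. Hence $\phi_1$ is constant on $\Ga$, and $\Ga$ is also a level surface of $\phi_1$.

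The only delicate point is justifying that the $o(e^{-\la_1 t})$ remainder can be evaluated pointwise at $x$ and $y$. This follows from the uniform convergence on $\ol{\Om}$ established just before the statement, via the Alikakos bound and the parabolic Hölder estimates; it is enough to apply that convergence to $\psi=u-h$ at the two points. Beyond that, no real obstacle is expected, in agreement with the remark that the proof is straightforward.
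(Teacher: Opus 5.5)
Your argument is correct and is exactly the straightforward reading of \eqref{spectral-asymptotics} that the paper has in mind. Compare the expansion at two points of $\Ga$: the constant term gives $h(x)=h(y)$, and the $e^{-\la_1 t}$ term gives $\phi_1(x)=\phi_1(y)$ when $\int_\Om (u_0-h)\,\phi_1\,dx\ne 0$. Note that pointwise convergence at the points of $\Ga$ already suffices, so the uniformity on $\ol{\Om}$ is more than you actually need.
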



As a consequence, we obtain a first simple rigidity result.
\begin{cor}
\label{cor:large-time}
If $\Ga=\pa D$, with $\ol{D}\subset\Om$, is a time-invariant surface for the solution of problem \eqref{cauchy-dirichlet-gen} with $b=\infty$, then $f$ must be constant. 
\end{cor}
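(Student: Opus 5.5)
The plan is to combine Theorem \ref{thm:large-time} with the maximum principle for harmonic functions. Let $\Ga=\pa D$ with $\ol{D}\subset\Om$ be time-invariant for all $t>0$, with $u(x,t)=c(t)$ on $\Ga\times(0,\infty)$. By the asymptotic expansion \eqref{spectral-asymptotics}, which holds uniformly on $\ol{\Om}$, we have $u(x,t)\to h(x)$ as $t\to\infty$, where $h$ solves \eqref{harmonic}. Hence $c(t)=u(x,t)$ converges for any fixed $x\in\Ga$, with limit $c_\infty$ independent of $x$. This gives $h\equiv c_\infty$ on $\Ga=\pa D$, which is exactly the first conclusion of Theorem \ref{thm:large-time}.

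Next, $h$ is harmonic in $D$, continuous on $\ol{D}$, and constant on $\pa D$. The maximum principle applied in the bounded open set $D$ therefore gives $h\equiv c_\infty$ in $D$. Since $\Om$ is a domain (connected) and $D$ is a non-empty open subset, the unique continuation (analyticity) of harmonic functions yields $h\equiv c_\infty$ in $\Om$. Because every point of $\pa\Om$ is regular, $h$ attains its boundary values continuously, so $f=h|_{\pa\Om}\equiv c_\infty$. Thus $f$ is constant.

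The only delicate point is justifying that $\Ga=\pa D$ separates a genuine open set $D$ on which the maximum principle applies. This is immediate, since $D$ is open and bounded with $\ol{D}\subset\Om$, so no regularity of $\pa D$ is needed. The passage $t\to\infty$ needs only the uniform convergence established before \eqref{spectral-asymptotics}.
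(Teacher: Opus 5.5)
Your proof is correct and follows essentially the same route as the paper. You let $t\to\infty$ to see that $\Ga$ is a level set of the harmonic function $h$, conclude that $h$ is constant on $\ol{D}$, extend this to $\Om$ by analytic continuation, and read off $f=h|_{\pa\Om}$; the only difference is that you make explicit the maximum-principle step on $D$, which the paper leaves implicit.
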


\begin{proof}
Under this assumption, since $\Ga$ is a level surface of $h$, we have that $h$ is identically constant on $\ol{D}$ and, by analytic continuation, on $\ol{\Om}$. Thus, $f$ must be constant.
\end{proof}

\begin{rem}
{\rm
The arguments presented in Theorem \ref{thm:large-time} and Corollary \ref{cor:large-time} can be repeated for the solution of \eqref{cauchy-dirichlet}. In order to do that, all what we need is to assume on $A(x)$ sufficient conditions (satisfied by our assumptions), which make sure that the following properties are satisfied.
\begin{enumerate}[(i)]
\item
The Dirichlet problem 
\begin{equation*}
\tr \bigl[A(x)\,\na^2 h\bigr]=0 \ \mbox{ in } \ \Om,  \quad
h=f \ \mbox{ on } \ \pa\Om,
\end{equation*}
admits a solution $h$ of class, say, $C(\ol{\Om})\cap C^2(\Om)$ (see \cite{LSW, Ol}).
\item
A spectral asymptotic formula of the kind \eqref{spectral-asymptotics} holds true. Here,  $\phi_1$ is a suitably normalized non-trivial solution of the problem
$$
\tr \bigl[A(x)\,\na^2 \phi_1\bigr]+\la_1^A\,\phi_1=0 \ \mbox{ and } \ \phi_1>0 \ \mbox{ in } \ \Om, \quad \phi_1=0 \ \mbox{ on } \ \pa\Om,
$$
for some $\la_1^A>0$ (see \cite{CH}).
\item
A unique continuation principle for solutions of 
$
\tr \bigl[A(x)\,\na^2 h\bigr]=0
$
holds in $\Om$ (see \cite[Chap. 18]{Ve}).
\end{enumerate}
\par
In particular, if properties (i)-(iii) hold true, we would get that the presence of a time-invariant (for large times) surface $\Ga=\pa D$, with $\ol{D}\subset\Om$, forces $f$ to be constant. 
}
\end{rem}

\begin{rem}
{\rm
When $\Ga$ extends up to $\pa\Om$, the large-time behavior of $u$ only gives the information contained in Theorem \ref{thm:large-time} and, possibly, that $\Ga$ is a level surface of all the Dirichlet eigenfunctions which are not orthogonal to $u_0-h$. Thus, in order to obtain some new interesting rigidity results, in the next section we examine the implications of the short-time behavior of $u$.
}
\end{rem}

\subsection{Invariant surfaces for short times: the case $f\ge 0$}
\label{subsec:short-times}

Note, {from now on, we set $u_0 \equiv 0$.} We begin with the following lemma, which essentially says that the case in which $\Ga=\pa D$, with $\ol{D}\subset\Om$ is quite general.

\begin{lem}
\label{lem:1}
Let $\Om$ be a bounded domain 
and set $a=0$. 
Suppose that 
$f \in C(\pa\Om)$ is non-negative, with $\cS\ne\varnothing$ given by \eqref{def-S}.
Let $u$ be the solution of \eqref{heat-cauchy-dirichlet}. 
\par
If  \eqref{time-invariant} holds  for some $\Ga\subset\Om$, then the following assertions hold true:
\begin{enumerate}[(i)]
\item
$\ol{\Ga}\cap\pa\Om=\varnothing$;
\item
 if $D\subset\Om$ is a domain with connected boundary and  $\Ga=\pa D\cap\Om\ne \varnothing$, then 
$\ol{D}\subset\Om$.
\end{enumerate}
\end{lem}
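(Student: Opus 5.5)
The plan is to compare the behaviour of $c(t)$ near a hypothetical boundary point of $\ol{\Ga}$ with the positivity of $u$ given by the strong maximum principle, and then deduce (ii) from (i) by a connectedness argument. Throughout, as elsewhere in the paper, I take $\pa\Om$ to consist of regular points for the Dirichlet problem (this holds under the exterior sphere or $C^{1,1}$ conditions used in the surrounding theorems). Then, for each fixed $t>0$, $u(\cdot,t)$ is continuous on $\ol{\Om}$ with $u(\cdot,t)=f$ on $\pa\Om$.

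\emph{Step 1: $c$ is positive and tends to $0$.} Since $f\ge0$ and $f\not\equiv0$, the strong maximum principle gives $u>0$ in $\Om\times(0,\infty)$. Hence $c(t)=u(x,t)>0$ for every $x\in\Ga$ and $t\in(0,b)$. Since $u_0\equiv0$ and $u$ is continuous up to $t=0$ at interior points, $u(x,t)\to0$ as $t\to0^+$ for each fixed $x\in\Om$. Fixing one $x\in\Ga$, this yields $c(t)\to0$ as $t\to0^+$.

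\emph{Step 2: proof of (i).} Suppose, by contradiction, that there is $z\in\ol{\Ga}\cap\pa\Om$, and pick $x_n\in\Ga$ with $x_n\to z$. For each fixed $t\in(0,b)$, continuity of $u(\cdot,t)$ up to the boundary gives
$$
c(t)=\lim_{n\to\infty}u(x_n,t)=u(z,t)=f(z).
$$
So $c$ is constant on $(0,b)$. Letting $t\to0^+$ and using Step 1 gives $f(z)=0$, hence $c\equiv0$. This contradicts $c>0$ from Step 1. The only delicate point in the whole argument is the boundary continuity of $u(\cdot,t)$ at $z$. Under the regularity assumptions in force this is standard, and it is the one place where the hypotheses on $\pa\Om$ are used.

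\emph{Step 3: proof of (ii).} Write $\pa D\subset\ol{\Om}$ as the disjoint union $\pa D=\Ga\cup(\pa D\cap\pa\Om)$. The set $\pa D\cap\pa\Om$ is closed. By (i), $\ol{\Ga}\subset\Om$, and since $\ol{\Ga}\subset\pa D$, we get $\ol{\Ga}\subset\pa D\cap\Om=\Ga$, so $\Ga$ is closed as well. Connectedness of $\pa D$ and $\Ga\ne\varnothing$ then force $\pa D\cap\pa\Om=\varnothing$, that is, $\pa D=\Ga\subset\Om$. Consequently $\ol{D}=D\cup\pa D\subset\Om$.
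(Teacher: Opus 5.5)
Your proposal is correct and follows essentially the same route as the paper: you get positivity of $c$ from the strong maximum principle and $c(t)\to0$ from the zero initial data, use boundary continuity at a regular point to force $c(t)=f(z)$, and conclude (ii) by splitting $\pa D$ into the disjoint closed sets $\ol{\Ga}$ and $\pa D\cap\pa\Om$ and using connectedness. The differences are cosmetic. You merge the paper's two cases ($z\notin\cS$, which contradicts $c>0$, and $z\in\cS$, which contradicts $c(t)\to0$) into a single chain, and you state explicitly the regularity of $\pa\Om$ that the paper uses tacitly.
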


\begin{proof}
(i) By the strong maximum principle, we infer that $c(t)>0$ for $t>0$. Thus, if by contradiction $\ol{\Ga}$ intersects $\pa\Om$ at some point $x_0$, then $x_0\in \cS$, otherwise, being as $x_0$ a regular point and $f$ continuous, we would get by continuity  the contradiction $0<c(t)=f(x_0)=0$. On the other hand, when $x_0\in\cS$, again by continuity, we infer that $c(t)=f(x_0)>0$ for every $t>0$. This is not possible, because we would have that $0<f(x_0)=c(t)=u(z,t)\to 0$ as $t\to 0^+$, for any $z\in\Ga$.
\par
(ii)
Note that $\pa D$ is the union of the two closed sets $\ol{\Ga}$ and $\ol{D}\cap\pa\Om$. By item (i) we know that these are disjoint. 
Since $\Ga\ne\varnothing$ and $\pa D$ is connected, then necessarily $\ol{D}\cap\pa\Om=\varnothing$. Therefore, $\ol{D}\subset\Om$.
\end{proof}

By merging this proposition with Theorem \ref{thm:large-time}, we discover that the rigidity result obtained in \cite{MS1}, when $f$ is constant on $\pa\Om$, still holds when $f$ is allowed to be non-constant.

\begin{proof}[Proof of Theorem \ref{thm:rigidity-definitive}]
Under the relevant assumptions, we can apply Corollary \ref{cor:large-time} and Lemma \ref{lem:1}. In fact, item (ii) of this lemma gives that 
$\ol{D}\subset\Om$. 
From Corollary \ref{cor:large-time} we then deduce that $f$ must be a non-zero constant, being as $\cS\ne\varnothing$. 
\par
Thus, up to a multiplicative constant, we are in the situation of \cite[Theorem 1.1]{MS1}.
The desired conclusion then follows from that theorem.
\end{proof}

\begin{rem}
{\rm
It should be noticed that, in \cite{MS4}[Theorem 1.2], the conclusion of Theorem \ref{thm:rigidity-definitive} is obtained only as the consequence of the short-time behavior, via the method of moving planes (see also \cite{CMS}). Notice that, when $f$ is constant, Theorem \ref{thm:large-time} does not give any extra useful information. The large-time behavior of the solution is thus crucial when $f$ is not constant.
\par
If we want to rely only on the short-time behavior of the solution, then, in order to obtain interesting rigidity results, we need assume extra sufficient conditions.}
\end{rem}

\par
 In what follows, we present results in the positive and some counterexamples. 
As starters, 
we will prove the two-invariant-surfaces result announced in Theorem \ref{thm:two-surfaces}.
Before proceeding to its proof, we need to adapt \cite[Lemma 3.1]{MS1} to the assumptions in hand. To this aim, we recall the notations stated in Section \ref{subsec:Omega}.

\begin{lem}[Analitycity of invariant surfaces]
\label{lem:lemma3.1ms1_revisited}
Let $\Omega$ be a bounded domain in $\mathbb R^N$.
Assume that either
\begin{enumerate}[(a)]
\item 
     $\Om$ satisfies the exterior sphere condition and $\cS = \pa\Om$, or
     \item $\pa\Om \in C^{1,1}$ and $\ol{\cS}=\pa\Om$.
\end{enumerate}
  \par  
Suppose that  $D$ is a domain with boundary $\Ga\subset\Om$ satisfying the interior cone condition, and
that \eqref{time-invariant} holds with $a=0$ and $b=\infty$.
Then, it holds that
    \begin{enumerate}[(i)]
        \item  
        there exists $R>0$ such that $d_{\partial\Omega}(x)=R$, for every $x \in \Gamma$;
        \item $\Gamma$  is analytic.
    \end{enumerate}
\end{lem}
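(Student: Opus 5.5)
The plan is to derive (i) from the Varadhan formula in Theorem \ref{thm:varadhan-elliptic-parabolic}, and then to obtain (ii) by repeating the argument of \cite[Lemma 3.1]{MS1}.

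\emph{Step 1: $\Ga$ is a level set of $d_{\pa\Om}$.} First we check that $d_\cS=d_{\pa\Om}$ on $\ol\Om$ under either hypothesis (here $A=I$, so $d_\cS$ is the Euclidean path distance inside $\Om$). In case (a) this is trivial, since $\cS=\pa\Om$. In case (b), $\ol{\cS}=\pa\Om$. The distance $d_\cS$ is Lipschitz and vanishes on $\ol\cS$ by the remark following \eqref{def-d-S}; alternatively, any nearest point $y\in\pa\Om$ of $x$ is a limit of points $y_n\in\cS$, and the segment from $x$ to $y_n$ may be perturbed slightly inside $\Om$ using the interior ball at $y$. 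Either way $d_\cS\le d_{\pa\Om}$, and the reverse inequality is obvious.

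Next, $f\ge0$ and $f\not\equiv0$, and $\Ga\subset\Om$ is compact by Lemma \ref{lem:1}. The uniform convergence in Theorem \ref{thm:varadhan-elliptic-parabolic}, on the compact set $\Ga\subset\Om$, then gives
\[
d_{\pa\Om}(x)^2=\lim_{t\to0^+}\bigl[-4t\log u(x,t)\bigr]=\lim_{t\to0^+}\bigl[-4t\log c(t)\bigr]\quad\text{for } x\in\Ga .
\]
The right-hand side does not depend on $x$, so $d_{\pa\Om}\equiv R$ on $\Ga$ for some $R>0$, with $R>0$ because $\Ga\subset\Om$. In case (a) we cannot invoke Theorem \ref{thm:varadhan-elliptic-parabolic}, since it is stated for $C^{1,1}$ boundaries. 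Instead we apply Proposition \ref{prop:varadhan-easy-extension}: if $\cS=\pa\Om$, then $f$ is continuous and positive on the compact set $\pa\Om$, so $f\ge a^->0$, and the exterior sphere condition makes every boundary point regular. This proves (i).

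\emph{Step 2: analyticity.} For (ii) we follow \cite[Lemma 3.1]{MS1}. By the interior cone condition, every $x\in\Ga$ has points of $D$ nearby, so $\Ga$ lies in the region where the nearest-point structure of $d_{\pa\Om}$ can be exploited. Take $x\in\Ga$ and a nearest point $y\in\pa\Om$, so that the ball $B_R(x)$ touches $\pa\Om$ at $y$. Using the ball $B_R(x)\subset\Om$, the interior cone condition on $\Ga$, and the fact that $\Ga$ is contained in the level set $\{d_{\pa\Om}=R\}$, we want to show that $\Ga$ has no ``singular'' points: at each point $\Ga$ admits a unique supporting structure, hence it is a Lipschitz graph and then $C^1$ locally. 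Then $\na u(\cdot,t)\neq0$ on $\Ga$ for small $t$. For this one can use the heat-content asymptotics of Theorem \ref{thm:mean-value-asymptotics} in the style of \cite{MS1}, or argue directly via Hopf's lemma applied to $u-c(t)$ in $D$, where $u(\cdot,t)-c(t)$ has a sign near $\Ga$ by the ordering of distances. Once $\Ga$ is a $C^1$ level set of $u(\cdot,t)$ with nonvanishing gradient, the implicit function theorem together with the real analyticity of $u(\cdot,t)$ in $x$ yields that $\Ga$ is analytic.

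\emph{Main obstacle.} The hardest part is the regularity upgrade in (ii): passing from ``$\Ga$ lies on the level set $\{d_{\pa\Om}=R\}$ and satisfies the interior cone condition'' to ``$\na u\neq0$ on $\Ga$''. In \cite{MS1} this step relied on $f$ being constant, so that $u$ has a one-signed time derivative and comparison arguments are available. Here $f\ge0$ still gives $u_t\ge0$ via the maximum principle applied to $u_t$, and I would try to reuse exactly that monotonicity, together with the cone condition and Hopf's lemma, to rule out critical points of $u(\cdot,t)$ on $\Ga$.
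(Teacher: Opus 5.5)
Your Step 1 is correct and is essentially the paper's proof of (i): Proposition \ref{prop:varadhan-easy-extension} in case (a), and $d_\cS=d_{\pa\Om}$ together with Theorem \ref{thm:varadhan-elliptic-parabolic} in case (b). Step 2, however, has a genuine gap: none of the routes you sketch can produce $\na U_\ve\ne0$ (or $\na u\ne0$) on $\Ga$.

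\emph{The geometric upgrade is false.} There is no purely geometric passage from ``$\Ga\subset\{d_{\pa\Om}=R\}$ plus interior cone condition'' to $C^1$. If $\Om$ is a square and $D=\{d_{\pa\Om}>R\}$, then $\pa D$ is a smaller square with corners and has both properties. The PDE must be used to exclude corners.

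\emph{Hopf's lemma does not apply.} Hopf for $c_\ve-U_\ve$ (or $c(t)-u$) in $D$ needs an interior ball at $x_0$, and you only have a cone. Moreover, if $\Ga$ had a corner at $x_0$, then $\na U_\ve(x_0)=0$ for every $\ve$, since otherwise $\Ga$ would coincide near $x_0$ with a smooth level set of the analytic function $U_\ve$. So excluding corners is exactly what must be proved, and the maximum principle cannot do it. The inclusion $D\subset\{d_{\pa\Om}\ge R\}$ only gives an \emph{exterior} ball $B_R(y)$, $y\in\pa\Om$, which is the wrong side. Compare Theorem \ref{thm:non-existence}, where an interior sphere condition is assumed precisely so that Hopf applies.

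\emph{Using Theorem \ref{thm:mean-value-asymptotics} is circular.} It requires $\pa\Om$ to be $C^2$ near the touching point and $\ka_j<1/R$. These are only obtained in Corollary \ref{cor:geometric_consequences}, as consequences of the analyticity of $\Ga$.

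The missing idea is the argument of \cite[Lemma 3.1]{MS1}, which the paper reuses. Pass to $U_\ve$, which equals a constant $c_\ve$ on $\Ga$. It suffices to find, for each $x_0\in\Ga$, some $\ve$ with $\na U_\ve(x_0)\ne0$. Suppose instead $\na U_\ve(x_0)=0$ for all $\ve$. The mean value property for solutions of $\ve^2\De w=w$, with the radius pushed up to $R=d_{\pa\Om}(x_0)$, gives $\int_{\pa B_R(x_0)}(x-x_0)\,U_\ve(x)\,dS_x=0$ for every $\ve>0$.

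Now put $x_0=0$ and let $K=\{x\in B_\rho: x_N<-|x|\cos\te\}\subset D$ be the interior cone. By the maximum principle $U_\ve\le c_\ve$ in $D$, so Varadhan's formula gives $d_{\pa\Om}\ge R$ on $D$. Hence every ball $B_R(z)$ with $z\in K$ lies in $\Om$. This forces $\pa\Om\cap\pa B_R$ into the cap $V=\{x\in\pa B_R: x_N>R\sin\te\}$, after shrinking $\te$ slightly. The closed lower hemisphere of $\pa B_R$ is then a compact subset of $\Om$, on which $d_{\pa\Om}$ is bounded below by a positive constant. By Varadhan's asymptotics, $U_\ve$ is exponentially larger on the part of $V$ near $\pa\Om$ than on that hemisphere. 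So $\int_{\pa B_R}x_N\,U_\ve\,dS_x>0$ for small $\ve$, a contradiction.

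In case (b) the convergence is uniform only on compact subsets of $\Om\cup\cS$, so the lower bound must be taken on $V\cap\{\eta<d_{\pa\Om}<2\de\}$; that is the only modification. Your ``main obstacle'' is therefore misdiagnosed. In \cite{MS1} the constancy of $f$ served only to make Varadhan's formula hold uniformly up to $\pa\Om$, and the sign of $u_t$ plays no role.
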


\begin{proof}
    (i) We recall that, by Proposition \ref{prop:varadhan-easy-extension},  (a) gives that $-4 t \log u(x,t)\to d_{\partial \Om}(x)$, as $t\to 0^+$. The convergence is uniform on $\ol{\Om}$.
We derive the same conclusion also if (b) is in force. In fact, in this case, we have that  $d_\cS=d_{\ol{\cS}}=d_{\pa\Om}$. Hence, we conclude thanks to Theorem \ref{thm:varadhan-elliptic-parabolic}.
However, the convergence holds uniformly on compact subsets of $\Om\cup \cS$, instead. 
\par
In any case,   for every $x \in \Ga$, we infer that 
$$
d_{\pa\Om}(x)=\lim_{t \to 0^+}-4 t \log u(x,t)=\lim_{t\to 0^+}-4t \log c(t).
$$
The last quantity is independent of $x$ and we can set it equal to an $R\ge 0$. Since, $\Ga \subset \Om$, we infer that $R>0$.
\par
(ii) We follow the main steps of the proof of \cite[Lemma 3.1 (ii)]{MS1}. It is convenient to work with the functions $U_\ve$, defined in \eqref{laplace}. Indeed, as already observed, if $\Ga$ is time-invariant for $u(x,t)$ for $t\in (0,\infty)$, then $U_\ve=c_\ve$  on $\Ga$, for some constants $c_\ve$ and any $\ve>0$. 
\par
In order to prove the analyticity of $\Ga$, since any $U_\ve$ is analytic, it is sufficient to show that, for any $x_0\in\Ga$, there is an $\ve_0>0$ such that $\na U_{\ve_0}(x_0)\ne 0$. Hence, if by contradiction $\na U_{\ve}(x_0)=0$ for any $\ve>0$, then, as done in \cite[Lemma 3.1 (ii)]{MS1}, we infer that 
$$
\int_{\pa B_R(x_0)}(x-x_0)\,U_\ve(x)\,dS_x =0 \ \mbox{ for any  $\ve >0$. } 
$$
Thus, in order to get a contradiction, up to translating $x_0$ to $0$ and rotating the axes, it suffices to show that there exists $\ve^*>0$ such that
$$
\int_{\pa B_R}  x_N\,U_\ve(x)\,dS_x >0 \ \mbox{ for $0<\ve<\ve^*$.} 
$$
Notice that, in these coordinates, the truncated cone $K\subset D$ touching $x_0=0$ can be parametrized as $K=\{ x\in B_\rho: x_N<-|x|\,\cos\te\}$ for some $\rho\in (0,R)$ and $\te\in (0, \pi/2)$. Also, the set $V=\{  x\in\pa B_R: x_N>R\,\sin\te\}$ contains $\pa\Om\cap\pa B_R$. 
\par
The proof of the case in which $f>0$ runs exactly as done in the proof of \cite[Lemma 3.1 (ii)]{MS1}. Indeed, by the uniform convergence of $-\ve \log U_\ve$ to $d_{\pa\Om}$ on $\ol{\Om}$, we can choose $\de>0$ and $\ve^*>0$ such that
\begin{multline}
\label{ms-am2002}
\int_{\pa B_R}  x_N\,U_\ve(x)\,dS_x\ge \\
Re^{-3\de/\ve}\left[\sin\te\, \cH^{N-1} (V\cap \Om_{2\de})-\frac12\,e^{-\de/\ve}\cH^{N-1}(\pa B_R) \right],
\end{multline}
for $0<\ve<\ve^*$. Therefore, the right-hand side can be made positive by choosing a smaller $\ve^*$, if needed.

Now, consider the case $f \ge 0$ with $\overline{S}=\partial\Om$.  All the previous arguments apply once we can employ our version of the Varadhan formula, i.e. Theorem \ref{thm:varadhan-elliptic-parabolic}, that holds under the assumption $\pa\Om\in C^{1,1}$. 
The only difference is due to the fact that now the relevant convergence is uniform  on compact subsets of $\Om\cup\cS$. 
\par
Nevertheless, we observe that \eqref{ms-am2002} can be modified as
\begin{multline*}
\int_{\pa B_R}  x_N\,U_\ve(x)\,dS_x\ge \\
Re^{-3\de/\ve}\left[\sin\te\, \cH^{N-1} (V\cap\Om_{2\de,\eta})-\frac12\,e^{-\de/\ve}\cH^{N-1}(\pa B_R) \right],
\end{multline*}
where $\Om_{2\de,\eta}=\{x \in \Om: \eta < d_{\pa\Om}(x)<2\de\}$ and $\eta>0$ is sufficiently small. This is possible because $-\ve\log U_\ve$ converges to $d_{\partial\Omega}$, uniformly on $\ol{\Om}_{2\de,\eta}$, and hence the same computations used in \cite{MS1} still work.
\par
Therefore, as in the previous case, we infer that the right-hand side of the last inequality is positive for any sufficiently small $\ve>0$.
\end{proof}

Once we have established the analyticity of $\Ga$, the same (geometric) arguments used in the proof of \cite[Lemma 3.1 (iii)--(vi)]{MS1} apply and give the following corollary.

\begin{cor}
\label{cor:geometric_consequences}
    Under the same assumptions of Lemma \ref{lem:lemma3.1ms1_revisited}, we have:
    \begin{enumerate}[(i)]
        \item $\pa\Om$ is analytic and $\pa\Om=\{ x\in\Om : d_\Ga(x)=R\}$;
        \item the mapping $\Ga\ni x \mapsto y(x)= x-R\,\nu^\Ga(x)\in \pa\Om$ is a diffeomorphism; 
        \item for every $x \in \Ga$, $\na d_{\pa\Om}(x)=\nu^\Ga(x)$ and $\pa\Om \cap \pa B_R(x)=\{y(x)\}$;
        \item for every $y \in \pa\Om$,  $\ka_j(y)<1/R$, for every $j=1,\dots,N-1$. 
\end{enumerate}
Here, $\nu^\Ga$ is the interior unit normal to $\Ga$.
\end{cor}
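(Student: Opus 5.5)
Once Lemma \ref{lem:lemma3.1ms1_revisited} is available, I would follow the geometric arguments of \cite[Lemma 3.1 (iii)--(vi)]{MS1}. The inputs are: $\Ga$ is analytic, $d_{\pa\Om}\equiv R$ on $\Ga$, and $\Ga=\pa D$ with $\ol D\subset\Om$ (Lemma \ref{lem:1}). Note that $\ol{\cS}=\pa\Om$ gives $d_\cS=d_{\pa\Om}$, so the non-constancy of $f$ enters only through the analyticity step, which is already settled. The remaining statements concern only the Euclidean function $d_{\pa\Om}$ and the surface $\Ga$.

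\textbf{Step 1: the ball $B_R(x)$ and its touching point.} For $x\in\Ga$, the ball $B_R(x)$ is contained in $\Om$ and touches $\pa\Om$ at some point $y$. I claim $y=x-R\,\nu^\Ga(x)$, with $\nu^\Ga$ taken as the normal pointing into $\Om\setminus\ol D$. Since $\Ga$ is a level set of $d_{\pa\Om}$ and $d_{\pa\Om}$ is semiconcave and decreasing along segments toward $y$, the segment $[x,y]$ leaves $\Ga$ transversally. A first-order argument along $\Ga$ then applies: moving $x$ tangentially along $\Ga$ keeps $d_{\pa\Om}=R$, which forces $y-x$ to be orthogonal to $T_x\Ga$. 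Uniqueness of $y$ also follows. If $y_1\ne y_2$ were both nearest points, $d_{\pa\Om}$ would have a concave corner at $x$, so its level set $\{d_{\pa\Om}=R\}$ could not be a smooth analytic hypersurface there: tangential motion along $\Ga$ toward either $y_i$ would decrease the distance to first order. This yields (iii) together with $\na d_{\pa\Om}(x)=\nu^\Ga(x)$, because $d_{\pa\Om}$ is differentiable exactly where the nearest point is unique.

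\textbf{Step 2: the map $y$ and the parallel-surface description.} The map $y(x)=x-R\nu^\Ga(x)$ is analytic because $\Ga$ is. Every point of $\pa\Om$ lies at distance $R$ from $\ol D$, since $\ol D\subset\{d_{\pa\Om}\ge R\}$ and the maximum of $d_{\pa\Om}$ over $D$ is attained on $\Ga$ by construction. Hence $\pa\Om\subset\{d_\Ga=R\}\cap(\RR^N\setminus D)$, and $y$ maps $\Ga$ onto $\pa\Om$. The converse inclusion follows from $\Om\supset \ol D+B_R$ and $\pa\Om\cap\pa B_R(x)=\{y(x)\}$.

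\textbf{Step 3: curvature bound and diffeomorphism.} Injectivity and the nondegeneracy of $dy$ are equivalent to $1-R\,k_j^\Ga(x)\ne0$, where $k_j^\Ga$ are the principal curvatures of $\Ga$. Because $B_R(x)\subset\Om$ touches at $y(x)$, the curvatures of $\pa\Om$ satisfy $\ka_j(y)\le 1/R$. The standard relation for parallel surfaces is $\ka_j(y)=k_j^\Ga/(1+Rk_j^\Ga)$, with the signs fixed accordingly. Since $k_j^\Ga$ is finite, this gives $\ka_j<1/R$ strictly, which is (iv). It also shows $dy$ is invertible, and injectivity follows from the uniqueness of the nearest point from the other side. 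Therefore $\pa\Om=y(\Ga)$ is an analytic embedded hypersurface, which is (i) and (ii).

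The main obstacle is Step 1, namely the uniqueness of the touching point and its identification with the normal foot. Everything else is differential geometry of parallel surfaces, borrowed from \cite{MS1}.
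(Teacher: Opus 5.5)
Your route is the one the paper takes (it simply invokes the geometric part of \cite[Lemma 3.1]{MS1}), but your sketch of that geometry has genuine gaps.

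\textbf{Step 1: uniqueness and the side of the foot point.} Your uniqueness argument does not work. By the orthogonality you just proved, any nearest point is $x\pm R\,n$ with $n$ normal to $\Ga$. So tangential motion changes $|x-y_i|$ only to second order. Moreover, a level set of $d_{\pa\Om}$ through a concave ridge can be analytic: take $\Om=\{1<|z|<3\}$ and $\Ga=\{|z|=2\}$. Then $d_{\pa\Om}\equiv 1$ on $\Ga$, and every point has two nearest points. No local argument gives uniqueness. What does is the global fact $\ol D\subset\Om$, $\Ga=\pa D$. The open segment from $x$ to a nearest point $y$ has $d_{\pa\Om}<R$, so it avoids $\Ga$; since $y\notin\ol D$, the segment cannot start into $D$. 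This excludes the candidate in the direction of $D$, proves uniqueness, and fixes the side. Note that with your convention (normal pointing into $\Om\setminus\ol D$) the foot is $x+R\nu^\Ga$ and $\na d_{\pa\Om}=-\nu^\Ga$; the paper's $\nu^\Ga$ points into $D$. The same crossing argument gives $d_{\pa\Om}\ge R$ on $\ol D$.

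\textbf{Step 2: surjectivity.} The claim that every point of $\pa\Om$ is at distance $R$ from $\ol D$ is unsupported. The bound $d_{\pa\Om}\ge R$ on $\ol D$ only gives $\dist(\cdot,\ol D)\ge R$ on $\pa\Om$. The reverse inequality at every boundary point is exactly surjectivity of $y$. Also, it is the minimum, not the maximum, of $d_{\pa\Om}$ on $\ol D$ that is attained on $\Ga$. The missing ingredient is the connectedness of $\Om$. The open set $\Om'=\{z:\dist(z,\ol D)<R\}$ lies in $\Om$. Moreover $\pa\Om'\subset\{\dist(\cdot,\ol D)=R\}\subset y(\Ga)\subset\pa\Om$, since the nearest point of such a $z$ lies on $\Ga$ and $z$ is its outer normal foot. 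Hence $\Om'$ is relatively closed in $\Om$, so $\Om'=\Om$ and $\pa\Om=y(\Ga)=\{z\notin\ol D:\dist(z,\Ga)=R\}$.

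\textbf{Step 3: circularity.} The curvatures $\ka_j(y)$, the bound $\ka_j\le 1/R$, and the parallel-surface formula all presuppose that $\pa\Om$ is $C^2$ near $y(x)$. That is, they presuppose $dy$ is invertible, which is what you then conclude. Let $k_j^\Ga$ be the principal curvatures of $\Ga$ with respect to the normal into $D$, so $dy$ has eigenvalues $1+Rk_j^\Ga$. The ball $B_R(y(x))$, being disjoint from $D$, only yields $1+Rk_j^\Ga\ge 0$. Nothing in your listed inputs excludes a focal point, where $y$ fails to be an immersion. So your remark that the rest concerns only $d_{\pa\Om}$ and $\Ga$ is exactly where the argument breaks: the hypothesis on $\pa\Om$ (exterior sphere condition in (a), $C^{1,1}$ in (b)) is indispensable.

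\textbf{The fix for Step 3.} Let $B_\rho(w)\subset\RR^N\setminus\ol\Om$ touch $\pa\Om$ at $y(x)$. For $z\in\ol D$, the segment $[z,w]$ meets $\pa\Om$, so $|z-w|\ge R+\rho$; on the other hand $|x-w|\le R+\rho$. Thus $x$, $y(x)$, $w$ are collinear, and $B_{R+\rho}(w)$ is an exterior ball for $D$ at $x$. Consequences:
\begin{enumerate}[(1)]
\item $k_j^\Ga(x)\ge -1/(R+\rho)>-1/R$, so $dy$ is invertible.
\item $x=y(x)+R\,(y(x)-w)/\rho$ is determined by $y(x)$. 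This is the injectivity you attributed to an unproved ``uniqueness from the other side''.
\end{enumerate}
Only then is $y$ an analytic embedding onto $\pa\Om$, and (iv) follows legitimately from $\ka_j=k_j^\Ga/(1+Rk_j^\Ga)<1/R$.
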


We now have all the ingredients to prove our rigidity result.

\begin{proof}[Proof of Theorem \ref{thm:two-surfaces}]
First, we recall that $U_\ve$ satisfies a mean value property for $U_\ve$. In fact, it holds that
\begin{equation}
\label{e:mean_value_ve}
U_\ve(p)=\frac{\psi_N(r/\ve)}{|\partial B_r|}\int_{\partial B_r(p)}U_\ve(x)\,dS_x, 
\end{equation}
for every $\ol{B_r(p)}\subset\Om$,  where 
$$
\psi_N(\si)=\frac{\int_0^\pi (\sin \te)^{N-2}\,d\te}{\int_0^\pi e^{\si\cos\te}(\sin\te)^{N-2}\,d\te} \ \mbox{ for } \ \si > 0.
$$
(See e.g. \cite{HMOSY, Ku}.)
\par
As already observed, \eqref{time-invariant} gives that that $U_\ve=c_i^\ve$ on $\Ga_i$ for some positive constants $c_i^\ve>0$.
    Since the right-hand side in \eqref{e:mean_value_ve} is continuous as $r \to d_{\pa\Om}(p)$, then by passing to the limit and by using that Lemma \ref{lem:lemma3.1ms1_revisited} (i), we get that
\begin{equation*}
c_\ve^i = U_\ve(p)=\frac{\psi_N(R_i/\ve)}{|\pa B_{R_i}| }\int_{\pa B_{R_i}(p)}U_\ve(x)\,dS_x, \ p\in\Ga_i,
\end{equation*}
for each $i=1,2$.
\par
Corollary \ref{cor:geometric_consequences} makes sure that the assumptions of Theorem \ref{thm:mean-value-asymptotics} are satisfied for both $\Ga_1$ and $\Ga_2$. Hence, passing to the limit as $\ve\to 0^+$ gives that
\begin{equation}
\label{eq:2_surfaces}
f(q)\prod_{j=1}^{N-1}\left[1/R_i-\kappa_j(q)\right]^{-1/2}=c_i \ \mbox{ for any } \ q\in\pa\Om, \ i=1,2,
\end{equation}
for some constants $c_i=c_i^0$. 
We note that $f>0$ on $\pa\Om$, because \eqref{eq:2_surfaces} rules out that $f$ vanishes somewhere.
Thus, we deduce that 
$$
\prod_{j=1}^{N-1}\frac{1-R_1\,\ka_j(q)}{1-R_2\,\ka_j(q)}= \frac{c_1}{c_2} \ \mbox{ for any } \ q\in \pa\Om.
$$
\par
Now, without loss of generality, we assume $R_2>R_1$ and consider the function defined by
 $$
 \Psi(s_1,\cdots,s_{N-1})=\prod_{j=1}^{N-1} \frac{1-R_1\,s_j}{1-R_2\,s_j},
 $$
 for $s_1 \ge \cdots, s_{N-1}$. We check that
$$
\frac{\pa_{s_k}\Psi(s_1,\cdots,s_{N-1})}{\Psi(s_1,\cdots,s_{N-1})}=\pa_{s_k}\log \Psi(s_1,\cdots,s_{N-1})=\frac{R_2-R_1}{(1-R_1\,s_k)(1-R_2\,s_k)}, 
$$
and hence, $\pa_{s_k}\Psi(s_1,\cdots,s_{N-1})>0$ for $k=1,\dots, N-1$.
\par
Therefore, an application of Theorem \ref{thm:Al} gives that $\pa\Om$ must be a sphere of radius $R$ such that 
$$
\prod_{j=1}^{N-1}\frac{1-R_1 R}{1-R_2 R}= \frac{c_1}{c_2}.
$$
Since $\Ga_1$ and $\Ga_2$ are parallel to $\pa\Om$, they must also be spheres, concentric with $\pa\Om$. 
Finally, \eqref{eq:2_surfaces} gives that $f$ must be constant. 
\end{proof}

\subsection{Invariant surfaces for short times: the case of sign-changing $f$}
\label{subsec:short-times-f-sign-changing}
In this section, we will prove Theorem \ref{thm:non-existence}.
Since $f$ is allowed to change sign, we use the notation of Section \ref{subsec:def-S} for the sets $\cS^\pm$, $\cS$, $\Om^\pm$, $G$, $G_\reg$, and $G_\sing$. We begin with a preliminary result.

\begin{lem}[The location of $\Ga$]
\label{lem:location-invariant-surface}
Let $\Om\subset\RR^N$, $N\ge 2$, be a bounded domain with boundary $\pa\Om$ of class $C^{1,1}$. Let $f\in C(\pa\Om)$, suppose that it changes sign, i.e.  the sets $\cS^\pm$ are non-empty, and assume that $G$ has no interior points. 
Let $u=u(x,t)$ be the solution of \eqref{heat-cauchy-dirichlet}.
\par
If $\Ga=\pa D$, with $\ol{D}\subset\Om$ and \eqref{time-invariant} holds  for $a=0$, then 
\begin{enumerate}[(i)]
\item  $\Gamma\not\subseteq G$\item either $D\subset \overline{\Om^+}$ or $D\subset \overline{\Om^-}$;
\item  there exists $R>0$ such that $d_{\mathcal S}=R$ on $\overline{\Gamma \setminus G}$.

\end{enumerate}
\end{lem}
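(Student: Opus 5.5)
The plan is to read off everything from the short-time asymptotics of Corollary \ref{cor:sign-changing}, applied at points of $\Ga$. Here $\Ga=\pa D$ is a compact subset of $\Om$, so it is a compact subset of $\Om\cup\cS$.

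\textbf{Item (i).} Suppose by contradiction that $\Ga\subseteq G$. Since $\ol{D}\subset\Om$ and $\Ga=\pa D$ is compact, the function $d_{\cS^+}-d_{\cS^-}$ vanishes on $\Ga$. I would then use the eikonal structure to show that $D\subset G$. Both $d_{\cS^\pm}$ are viscosity solutions of $|\na v|=1$ in $\Om$ (Remark after \eqref{def-d-S}), and they coincide on $\pa D$. For the Euclidean metric (here $A=I$) we have the representation $d_{\cS^\pm}(x)=\min_{z\in\pa D}\bigl(d_{\cS^\pm}(z)+\ell_D(x,z)\bigr)$ for $x\in D$, where $\ell_D$ is the intrinsic distance in $\ol D$. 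This is the optimal-control formula: any optimal path from $x\in D$ to $\cS^\pm$ must leave $D$ through $\pa D$. It forces $d_{\cS^+}=d_{\cS^-}$ on $D$, so $D\subset G$. This contradicts the assumption that $G$ has no interior points. I expect this representation to be the main technical point. I would justify it by splitting a near-optimal path at its first exit from $D$, and comparing it with the definition \eqref{def-d-S} in both directions.

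\textbf{Item (ii).} By (i) there is $x_0\in\Ga\setminus G$, say $x_0\in\Om^+$, that is, $d_{\cS^+}(x_0)<d_{\cS^-}(x_0)$. Corollary \ref{cor:sign-changing} shows that $\ve\log$ of the ratio of the negative and positive parts tends to $-(d_{\cS^-}-d_{\cS^+})(x_0)<0$. Hence $u(x_0,t)>0$ for small $t$, and therefore $c(t)>0$ for small $t$. If some other point $x_1\in\Ga$ lay in $\Om^-$, the same argument would give $u(x_1,t)<0$ for small $t$, which contradicts $u(x_1,t)=c(t)$. So $\Ga\subset\ol{\Om^+}$, and $\Ga\cap\Om^-=\varnothing$.

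To pass from $\Ga$ to $D$, I would argue that $\Om^-$ cannot meet $D$. Suppose a component of $\Om^-$ met $D$. That component is disjoint from $\Ga$, so it would lie entirely inside $D$, hence in $\Om$, away from $\pa\Om$. But every point of $\Om^-$ is joined to $\cS^-$ by a near-optimal path for $d_{\cS^-}$, and along this path $d_{\cS^-}<d_{\cS^+}$ persists. This holds because, along an optimal path, $d_{\cS^-}$ decreases at unit rate while $d_{\cS^+}$ decreases at rate at most one. Such a path must cross $\pa D=\Ga$ inside $\Om^-$, which is impossible. Thus $D\subset\ol{\Om^+}$, and by symmetry the other case gives $D\subset\ol{\Om^-}$.

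\textbf{Item (iii).} Work in the case of (ii) with $\Ga\subset\ol{\Om^+}$. For $x\in\Ga\setminus G$, Corollary \ref{cor:sign-changing} gives
\[
d_{\cS}(x)^2=\lim_{t\to0^+}\bigl[-4t\log|u(x,t)|\bigr]=\lim_{t\to0^+}\bigl[-4t\log c(t)\bigr].
\]
The right-hand side is independent of $x$. Call it $R^2$; it exists and is finite because $\Ga\setminus G\neq\varnothing$. Then $R>0$, since $\Ga\subset\Om$ and $d_\cS\ge d_{\pa\Om}>0$ there. Finally, $d_\cS=\min(d_{\cS^+},d_{\cS^-})$ is Lipschitz on $\ol\Om$, so $d_\cS=R$ extends to the closure $\ol{\Ga\setminus G}$ by continuity.
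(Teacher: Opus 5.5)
Your proof is correct and follows essentially the paper's route. The sign of $u$ for small $t$ at points of $\Om^\pm$ comes from the decomposition behind Corollary~\ref{cor:sign-changing}. The passage between $D$ and $\Ga$ uses that any near-minimizing path from a point of $D$ to $\pa\Om$ must cross $\Ga=\pa D$ while preserving the strict inequality between $d_{\cS^+}$ and $d_{\cS^-}$. Item (iii) follows from the $x$-independence of $\lim_{t\to0^+}[-4t\log c(t)]$ together with the continuity of $d_\cS$.

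The only difference is in (i), and it is cosmetic. You propagate the equality $d_{\cS^+}=d_{\cS^-}$ from $\Ga$ into $D$ via a dynamic-programming identity. The paper instead pushes the strict inequality from a point of $D\setminus G$ out to $\Ga$ along a minimizing straight segment. Your path-based version is, if anything, the more accurate one for the intrinsic distance $d_\cS$ when $\Om$ is not convex.
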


\begin{proof} It is clear that $D$ is not contained in $G$, since $G$ has empty interior.
Without loss of generality, we can then assume that $D\cap\Om^+$ contains a point $x$, namely, $d_{\cS^+}(x)<d_{\cS^-}(x)$. Next, let $y\in\ol{\cS^+}$ be such that $|x-y|=d_{\cS^+}(x)$.
Since $\ol{D}\subset\Om$, on the segment joining $x$ to $y$, there is a point $x'$ belonging to $\Ga$. It is immediate to check that 
$$
d_{\cS^+}(x')=d_{\cS^+}(x)-|x-x'|<d_{\cS^-}(x')-|x-x'|\le d_{\cS^-}(x).
$$ 
The last inequality follows from the triangle inequality. We thus infer, in particular, that $x\in\Ga\setminus G$, and hence, (i) is proved.
\par
Next, by the same arguments, we have that $D\cap\Om^\pm\ne\varnothing$ if and only if $\Ga\cap\Om^\pm\ne\varnothing$. Now, by Corollary \ref{cor:sign-changing}, 
we have that
$u(x, t) > 0$ for $x\in\Om^+$ and $u(x, t) < 0$ for $x\in\Om^-$, if $t$ is sufficiently small.
Thus, $\Ga=\pa D$ cannot
intersect both $\Om^+$ and $\Om^-$. Item (ii) then follows at once.
\par
Lastly, the conclusion of item (iii) follows from the Varadhan formula (Corollary
\ref{cor:sign-changing}), the fact that $d(\Ga, \cS)>0$, and the continuity of $d_\cS$.
\end{proof}

\vskip.2cm

\begin{proof}[Proof of Theorem \ref{thm:non-existence}]
(i) First, we prove that $\Ga$ is analytic. In fact, as observed, the modified Laplace transform
of $u$, $U_\ve$, defined by \eqref{laplace}, satisfies \eqref{heat-resolvent-dirichlet}.
Since $U_\ve=c_\ve$ on $\pa D$, then, depending on the sign of $c_\ve$, by the maximum principle applied in $D$, we see that either the maximum or the
minimum of $U_\ve$ is attained at the boundary points.
Thus, the strong maximum principle and the Hopf lemma give that $\na U_\ve\ne 0$ on $\Ga$. Thence,  $\Ga$ is analytic, by the implicit function theorem, being as $U_\ve$ analytic in $\Om$.
\par
(ii) Next, we prove that any $x\in G_\reg\cap\Om$ lies on a segment contained in $G_\reg$.
In fact, by the definition of $G_\reg$, there is a unique point $y\in\pa\Om$ such that $d_{\pa\Om}(x)=|x-y|$. Since $x\in G$, the uniqueness gives that $y\in\ol{\cS^+}\cap\ol{\cS^-}$. It is easy to show that $y$ also (uniquely) minimizes its distance to any point of the segment $\si$ joining $x$ to $y$. Consequently, $\si$ is contained in $G_\reg$.
\par
(iii) We have that $d_\cS=R$ on $\Ga$, for some $R>0$. Indeed, thanks to item (ii) of Lemma \ref{lem:location-invariant-surface}, all we have to do is to show that $\Ga\setminus G$ is dense in $\Ga$. 
\par
By contradiction, let $B_r(x)$ be a small ball centerd at $x\in\Ga$, which does not intersect $\Ga\setminus G$. We thus have that $B_r(x)$ does intersect $\Ga\setminus G_\sing$, since $\cH^{N-1}(G_\sing)=0$. In particular, the set $B_r(x) \cap \Ga\cap G_\reg$ contains a point $x'$. Since $G_\reg$ is relatively open, there exists a smaller radius $r'$ such that $B_{r'}(x') \cap G\subset G_\reg$ and $B_{r'}(x')\subset B_{r}(x)$. Up to further decreasing $r'$, we can also assume that $B_{r'}(x') \cap G$ is contained in one (say $G_i$) of the $k$ regular surfaces constituting $G_\reg$. Being as $B_{r'}(x')\subset B_{r}(x)$, we infer that $B_{r'}(x')$ does not intersect $\Ga\setminus G_i$, and hence $B_{r'}(x')\cap\Ga\subset B_{r'}(x')\cap G_i$.
Consequently, since the two sets in this inclusion are connected surfaces of class $C^1$ of the same dimension, we conclude that $\Ga$ and $G_i$ coincide in $B_{r'}(x')$.
\par
Now, item (ii) ensures that $\Ga$ contains a segment $\si$, as well. Thus, by the analytycity of $\Ga$, $\Ga$ also contains the segment extending $\si$ up to the boundary $\pa\Om$.
This contradicts that $\ol{D}\subset\Om$.
\par
(iv) Conclusion. Item (iii) and the assumption $\ol{\cS}=\pa\Om$ make sure that $d_{\pa\Om}=R$ on $\Ga$. We then can apply items (ii) and (iii) of Corollary \ref{cor:geometric_consequences} and infer that the mapping $\Ga\ni x\mapsto x-R\,\nu^\Ga\in\pa\Om$ defines a diffeomorphism between $\Ga$ and $\pa\Om$.
\par
Now, since $G$ has empty interior, thanks to item (i) of Lemma \ref{lem:location-invariant-surface},  either $D\subset \overline{\Om^+}$ or $D\subset \overline{\Om^-}$. Assume, for instance, that $D\subset \overline{\Om^+}$. Then we have in particular that $d_\cS=d_{\cS^+}$ on $\Ga$, and hence 
$x-R\,\nu^\Ga\in\ol{\cS^+}$ for $x\in\Ga$. Since this mapping is suriective onto $\pa\Om$, we conclude that $\ol{\cS^+}=\pa\Om$, and hence $\cS^-=\varnothing$. This is a contradiction, because $f$ changes sign.
\end{proof}

We conclude this section with some remarks about the problem of invariant surfaces in the sign-changing regime.
\begin{rem}
\label{rem:IS-sign-changing}
{\rm
(i) We first notice that the setting of problem \eqref{cauchy-dirichlet-gen} is too general, if we want to derive reasonable rigidity results induced by time-invariant surfaces for short times. Indeed, any harmonic function $h$ in $\Om$ is a solution of \eqref{cauchy-dirichlet-gen}, with $u_0=h$ and $f=h_{|\pa\Om}$. In this case, all the level surfaces of $h$ are invariant.  
Moreover, we can construct similar examples, owing to the charachterization contained \cite{MM2}. Thus, the assumption of  problem \eqref{heat-cauchy-dirichlet} (i.e $u_0\equiv 0$ in $\Om$) is important.
\par
(ii) If we consider problem \eqref{heat-cauchy-dirichlet}, instead, we observe that the zero-sets of the solutions constructed in item (i) of Remark \ref{rem:threshold}
are diffusion-invariant sets (and hence time-invariant sets for a suitable parabolic Cauchy-Dirichlet setting) for the solutions of \eqref{heat-resolvent-dirichlet}. Thus, those examples show that invariant ``surfaces'' may have self-intersections and extend up to the boundary, when the Dirichlet boundary values change sign. 
}
\end{rem}

\appendix

\section{Auxiliary results}
\label{appA}
In this appendix, we collect some results already present in the literature, which are instrumental to the proofs of Theorems \ref{thm:varadhan-elliptic-parabolic}, \ref{thm:mean-value-asymptotics}, and \ref{thm:two-surfaces}.
\par
We begin with two lemmas regarding the Laplace transform of a function.

\begin{lem}[\cite{Va}, Theorem 4.7]
\label{lem:laplace-transform}
Let $F_\be:[0,\infty)\to [0,1]$ be non-decreasing functions such that $F_\be(0)=0$, for $\be$ in some set of indices. Consider the Laplace transforms:
$$
G_\be(\tau)=\int_0^\infty e^{-\tau t} dF_\be(t), \ \tau\ge 0.
$$
\par
If
$$
\lim_{\tau\to\infty}\left[-\frac1{\sqrt{2 \tau}} \log G_\be(\tau) \right]=l_\be
$$
uniformly in $\be$ and if $0\le l_\be\le L<\infty$, then
$$
\lim_{t\to 0}\left[-2t \log F_\be(t) \right]=l_\be^2,
$$
uniformly in $\be$.
\end{lem}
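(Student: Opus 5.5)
The statement is a Tauberian lemma of exponential type, and I would prove it by hand rather than through the classical theorems. The two tools are the Abelian inequality $G_\be(\tau)\ge e^{-\tau t}F_\be(t)$ and, after integrating by parts, the representation $G_\be(\tau)=\int_0^\infty \tau e^{-\tau s}F_\be(s)\,ds$, which is valid because $0\le F_\be\le 1$ and $F_\be(0)=0$. Fix $\de>0$. The hypothesis gives $\tau_0$, independent of $\be$, such that
$$e^{-(l_\be+\de)\sqrt{2\tau}}\le G_\be(\tau)\le e^{-(l_\be-\de)\sqrt{2\tau}}\quad\text{for }\tau\ge\tau_0.$$
Throughout I write $\si=\sqrt{2\tau}$.

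\emph{Upper bound for $F_\be$.} From the Abelian inequality,
$$\log F_\be(t)\le \tau t+\log G_\be(\tau)\le \tfrac{\si^2 t}{2}-(l_\be-\de)\si .$$
The right-hand side is minimized at $\si=(l_\be-\de)/t$. To keep $\tau\ge\tau_0$ uniformly, also when $l_\be$ is small, I take instead $\si=\max(l_\be-\de,\de)/t$. This gives $-2t\log F_\be(t)\ge (l_\be-\de)_+^2-O(\de)$ for all $t$ below a threshold depending only on $\de$ and $L$. Hence $\liminf_{t\to0}[-2t\log F_\be(t)]\ge l_\be^2$, uniformly in $\be$.

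\emph{Lower bound for $F_\be$.} This is the Tauberian half and the main obstacle. I choose $\si=l_\be/t$, so that the critical point $s^*=l_\be/\si$ of $s\mapsto\tau s+l_\be^2/(2s)$ equals $t$. I then split $\int_0^\infty\tau e^{-\tau s}F_\be(s)\,ds$ into three pieces: $s\in[(1-\eta)t,t]$, $s\in(0,(1-\eta)t)\cup(t,s_0)$, and $s\ge s_0$. On the first piece, monotonicity gives $F_\be(s)\le F_\be(t)$, so this piece is at most $e^{-\tau t(1-\eta)}F_\be(t)$. On the second piece I insert the upper bound just proved, $F_\be(s)\le \exp(-(l_\be-\de)^2/(2s))$. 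Since $\tau s+l_\be^2/(2s)$ is strictly convex with minimum value $l_\be\si$ at $s=t$, it exceeds $l_\be\si$ by at least $c\,\eta^2l_\be^2/t$ for $|s-t|\ge\eta t$. If $\de\ll\eta^2$, this margin beats the losses of order $\de l_\be/t$ coming from replacing $l_\be$ by $l_\be\pm\de$. So this piece is exponentially smaller than the lower bound $e^{-(l_\be+\de)\si}$ for $G_\be$. The piece $s\ge s_0$ is at most $e^{-\tau s_0}$, which is negligible because $\tau\sim t^{-2}$. Solving for $F_\be(t)$ gives
$$F_\be(t)\ge \tfrac12\,e^{\tau t(1-\eta)}e^{-(l_\be+\de)l_\be/t}=\tfrac12\exp\!\Bigl(-\tfrac{l_\be^2(1+\eta)+2\de l_\be}{2t}\Bigr).$$
Hence $\limsup_{t\to0}[-2t\log F_\be(t)]\le l_\be^2(1+\eta)+2\de L$. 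Letting $\de$ and then $\eta$ go to $0$ yields the claim. All constants depend only on $\de,\eta,L,\tau_0$, which gives uniformity in $\be$.

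\emph{Degenerate case.} When $l_\be<\eta$, the choice $\si=l_\be/t$ may violate $\tau\ge\tau_0$, and the convexity margin degenerates. There I take $\tau=M/t$ with $M$ large and use the crude bound $G_\be(\tau)\le F_\be(t)+e^{-M}$. Together with $G_\be(\tau)\ge e^{-(\eta+\de)\sqrt{2M/t}}$, this gives $-2t\log F_\be(t)\le O(\sqrt{t})+o(1)$, which is within $O(\eta^2)$ of $l_\be^2$. The delicate points in the whole argument are checking that the convexity gap dominates all the $\de$-errors uniformly in $\be$, and the careful order of choosing $\eta$ first, then $\de$, then $t$ small.
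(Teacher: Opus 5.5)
The paper gives no proof of this lemma; it is quoted from Varadhan's Theorem 4.7, so your argument has to stand on its own. Your Abelian half is correct and uniform: $F_\beta(t)\le e^{\tau t}G_\beta(\tau)$ with $\sigma=\max(l_\beta-\delta,\delta)/t$. The gap is in the Tauberian half, and it comes from where you put the saddle point.

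With $\sigma=l_\beta/t$, the minimizer of $s\mapsto\tau s+l_\beta^2/(2s)$ sits exactly at the right endpoint of your window $[(1-\eta)t,t]$. Your second piece therefore contains $(t,(1+\eta)t)$, and neither of your tools works there. Monotonicity goes the wrong way, since $F_\beta(s)\ge F_\beta(t)$. Your convexity margin is only available for $|s-t|\ge\eta t$. Concretely, write $a=l_\beta-\delta$ and use the identity $\tau s+a^2/(2s)=a\sigma+\sigma^2(s-a/\sigma)^2/(2s)$. At $s=t$ your bound for the integrand is $\tau\exp\bigl(-(l_\beta-\delta)\sigma-\delta^2/(2t)\bigr)$. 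This exceeds your lower bound $e^{-(l_\beta+\delta)\sigma}$ for $G_\beta(\tau)$ by the exponentially large factor $\tau\exp\bigl((2\delta l_\beta-\delta^2/2)/t\bigr)$.

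This cannot be patched at that single $\tau$. The function $F=0$ on $[0,t]$, $F(s)=e^{-(l_\beta-\delta)^2/(2s)}$ for $s>t$ is monotone and obeys your upper bound. Its transform at this $\tau$ still exceeds $e^{-(l_\beta+\delta)\sigma}$, yet $F(t)=0$. So the claim $I_1\ge\frac12G_\beta(\tau)$ is not established.

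The repair is to put the saddle strictly inside the window: take $\sigma=l_\beta/\bigl((1-\eta/2)t\bigr)$. For $\delta\le\eta l_\beta/4$, the minimizer $(l_\beta-\delta)/\sigma$ of $\tau s+(l_\beta-\delta)^2/(2s)$ lies at distance at least $\eta t/4$ from both endpoints. The identity above, together with the monotonicity of $(s-s')^2/s$ on each side of $s'$, then gives an excess of at least $\eta^2l_\beta^2/(32t)$ over $(l_\beta-\delta)\sigma$ on the whole complement of the window. This beats the loss $2\delta\sigma\le4\delta l_\beta/t$ provided $\delta\ll\eta^2l_\beta$. For $l_\beta\ge\eta$ that means $\delta\ll\eta^3$; your $\delta\ll\eta^2$ is not enough near the threshold. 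The same bookkeeping then yields $-2t\log F_\beta(t)\le l_\beta^2/(1-\eta/2)^2+2\delta l_\beta/(1-\eta/2)+2t\log2$, which suffices.

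Your degenerate case also needs correcting. With $\tau=M/t$ and $M$ fixed, the inequality $F_\beta(t)\ge e^{-(\eta+\delta)\sqrt{2M/t}}-e^{-M}$ has a negative right-hand side as $t\to0$. So it gives nothing, and the claimed $O(\sqrt t)$ bound does not follow. You must let $M$ grow like $1/t$. For instance, $M=8(\eta+\delta)^2/t$, i.e. $\sqrt{2\tau}=4(\eta+\delta)/t$, gives $e^{-M}\le\frac12e^{-4(\eta+\delta)^2/t}$ for small $t$. Hence $-2t\log F_\beta(t)\le8(\eta+\delta)^2+2t\log2$, which is the $O(\eta^2)$ you need.

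With these two corrections, and the order of choice $\eta$, then $\delta$, then $t$, your argument becomes a complete proof. It is uniform in $\beta$, since all thresholds depend only on $\eta,\delta,L,\tau_0$. The structure is an Abelian bound followed by Laplace-method localization using the already-proved upper bound on $F_\beta$.
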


The next lemma can be derived from \cite{Fe}.

\begin{lem}[{\cite[Ch.~XIII]{Fe}}]
\label{lem:feller}
Let $F:[0,\infty)\to[0,\infty)$ be a non-decreasing function with
$F(0)=0$. Consider its Laplace transform
\[
G(\tau):=\int_0^\infty e^{-\tau t}\,dF(t),
\qquad \tau>0.
\]
Assume that there exists $\rho\ge0$ such that
\[
\lim_{\tau\to\infty}\frac{G(\lambda\tau)}{G(\tau)}
=\lambda^{-\rho},
\qquad\text{for every }\lambda>0.
\]
Then
\[
\lim_{t\to0^+}\frac{F(tx)}{G(1/t)}
=
\frac{x^\rho}{\Gamma(\rho+1)},
\qquad\text{for every }x>0.
\]
In particular,
\[
\frac{F(t)}{G(1/t)}\to \Gamma(\rho+1)
\qquad\text{as }t\to0^+.
\]
\end{lem}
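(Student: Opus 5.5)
The statement is a Tauberian theorem of Karamata type, and I would prove it by the classical Feller route: normalize the Stieltjes measures $dF$ and let a compactness argument together with uniqueness of Laplace transforms identify the limit. For $t>0$ I define the measures $\mu_t(dx)=dF(tx)/G(1/t)$ on $[0,\infty)$, so that $\mu_t([0,x])=F(tx)/G(1/t)$. Their Laplace transforms are
$$
\int_0^\infty e^{-\la x}\,\mu_t(dx)=\frac{1}{G(1/t)}\int_0^\infty e^{-\la s/t}\,dF(s)=\frac{G(\la/t)}{G(1/t)}.
$$
Setting $\tau=1/t$, the hypothesis says that this quotient tends to $\la^{-\rho}$ as $t\to0^+$, for every $\la>0$. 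The limit $\la^{-\rho}$ is exactly the Laplace transform of the measure $\mu(dx)=x^{\rho-1}dx/\Ga(\rho)$ (for $\rho=0$ it is the Dirac mass at $0$). Its distribution function is $\mu([0,x])=x^\rho/\Ga(\rho+1)$.

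Next I would run the extended continuity theorem for Laplace transforms, as in Feller, Ch.~XIII.1. Choose $\la=1$. Then $\int e^{-x}\mu_t(dx)\to1$, so for small $t$ we get the local uniform bound $\mu_t([0,a])\le e^{a}\sup_t\int e^{-x}\mu_t(dx)<\infty$. Helly's selection theorem then gives, for any sequence $t_n\to0$, a subsequence along which $\mu_{t_n}$ converges vaguely to some measure $\nu$. To identify $\nu$ I need to pass to the limit in $\int e^{-\la x}d\mu_{t_n}$ for each $\la>0$. This requires controlling the tails, which I do with the bound
$$
\int_a^\infty e^{-\la x}\,d\mu_t\le e^{-(\la-\la')a}\int e^{-\la' x}\,d\mu_t
$$
for any $0<\la'<\la$; the right-hand side is uniformly small for large $a$. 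It follows that $\int e^{-\la x}d\nu=\la^{-\rho}$ for all $\la>0$. Uniqueness of the Laplace transform then forces $\nu=\mu$. Since every subsequence has the same limit, the whole family converges vaguely: $\mu_t\to\mu$.

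Finally, I convert vague convergence into convergence of the distribution functions. Vague convergence gives $\mu_t([0,x])\to\mu([0,x])$ at every continuity point of $x\mapsto\mu([0,x])$. When $\rho>0$ the limit $x^\rho/\Ga(\rho+1)$ is continuous, so this holds for every $x>0$ and yields $F(tx)/G(1/t)\to x^\rho/\Ga(\rho+1)$. In the degenerate case $\rho=0$ the limit is $\mathbf 1_{[0,\infty)}$, which is continuous on $(0,\infty)$, so the conclusion again holds for all $x>0$. Taking $x=1$ gives the \emph{in particular} statement, and the constant there is $1/\Ga(\rho+1)$. The displayed formula in the statement, $\Ga(\rho+1)$, should read $1/\Ga(\rho+1)$; I regard this as a typo, and $1/\Ga(\rho+1)$ is the constant I would prove.

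The main obstacle is the tail and uniform-integrability step. It is what justifies passing the limit inside $\int e^{-\la x}d\mu_{t_n}$ and so identifies $\nu$; it works only because convergence of the transforms is available at every $\la>0$, so a smaller $\la'$ can always be used to bound the tail. The bookkeeping with the atom at $0$ when $\rho=0$ is routine.
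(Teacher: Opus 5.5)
Your argument is correct and follows essentially the paper's route. The paper simply cites Feller's Tauberian theorem (Ch.~XIII, \S5, Theorem~1, together with Theorem~3 to interchange the roles of the origin and infinity), and you have written out Feller's proof via the extended continuity theorem for Laplace transforms, run directly in the regime $t\to0^+$, $\tau\to\infty$.

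You are also right that the constant in the final display should be $1/\Gamma(\rho+1)$, since it is the case $x=1$ of the main formula. This is the constant the paper actually uses in Theorem~\ref{thm:mean-value-asymptotics}, where $\rho=\frac{N-1}{4}$ gives the factor $1/\Gamma\left(\frac{N+3}{4}\right)$.
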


\begin{proof}
The statement follows from Theorem~1 of Chapter XIII, \S5 in
\cite{Fe}, together with Theorem~3 therein, which asserts that the
result remains valid after interchanging the roles of the origin and
infinity.
\end{proof}

Finally, we recall a version of the celebrated \textit{Soap Bubble Theorem}, which applies to Wirtinger-type surfaces, for which A.~D.~Aleksandrov created his \textit{reflectin principle}.

\begin{thm}[\cite{Ale, Re}]
\label{thm:Al}
    Let $\Psi=\Psi(s_1,\cdots, s_{N-1})$ be a continuously differentiable function, defined for $s_1\ge \cdots\ge s_{N-1}$, and subject to the conditions $\partial_{s_j}\Psi>0$, for $j=1,\dots, N-1$.    Suppose that $\Ga$ is a twice-differentiable closed surface without self-intersections and with bounded principal curvatures $\ka_1, \dots, \ka_{N-1}$.
\par   
If $\Psi(\ka_1,\cdots,\ka_{N-1})$ is constant on $\Ga$, then $\Ga$ is a sphere.
\end{thm}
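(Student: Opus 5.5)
The plan is Aleksandrov's reflection (moving plane) method, with the constancy of $\Psi(\ka_1,\dots,\ka_{N-1})$ playing the role of constant mean curvature. Let $\Om_\Ga$ be the bounded domain enclosed by $\Ga$, which exists because $\Ga$ is closed and has no self-intersections.

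First, fix a unit direction $e$ and consider the hyperplanes $\pi_\la=\{x:\lan x,e\ran=\la\}$ for $\la$ large, so that $\pi_\la$ misses $\ol{\Om_\Ga}$. Decrease $\la$ and reflect the cap $\Sigma_\la=\{x\in\Ga:\lan x,e\ran>\la\}$ across $\pi_\la$. For $\la$ slightly below $\max_\Ga\lan x,e\ran$, the reflected cap $\Sigma_\la'$ lies inside $\Om_\Ga$. Let $\la_*$ be the infimum of the values of $\la$ for which this persists. At $\la_*$ exactly one of the two standard situations occurs:
\begin{enumerate}[(a)]
\item $\Sigma'_{\la_*}$ is internally tangent to $\Ga$ at a point $p\notin\pi_{\la_*}$;
\item $\pi_{\la_*}$ is orthogonal to $\Ga$ at a point $p\in\ol{\Sigma_{\la_*}}\cap\pi_{\la_*}$.
\end{enumerate}
Near $p$, I will write $\Ga$ and $\Sigma'_{\la_*}$ as graphs $u\ge v$ over the common tangent plane, oriented with the inner normal, with $u(0)=v(0)$ and $\na u(0)=\na v(0)$. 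In case (b), $p$ is a boundary point of a half-ball and $w=u-v$ also vanishes together with its gradient along the plane.

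Second, I will set up the comparison equation. Let $\ka(D^2\phi,\na\phi)$ denote the ordered principal curvatures of the graph of $\phi$. Put $F(D^2\phi,\na\phi)=\Psi(\ka(D^2\phi,\na\phi))$. Then $F(D^2u,\na u)=F(D^2v,\na v)=c$. The map from a symmetric matrix to its ordered eigenvalues is Lipschitz and monotone in the Loewner order, by Weyl's inequalities. Combined with $\pa_{s_j}\Psi>0$, and with the hypothesis that the curvatures are bounded, this shows that $F$ is uniformly elliptic and Lipschitz on the bounded range of arguments that occurs. Here $\pa_{s_j}\Psi$ is continuous on a compact set, hence bounded between two positive constants. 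Consequently $w=u-v\ge0$ satisfies the Pucci extremal inequalities
$$
\cM^-(D^2w)-C|\na w|\le 0\le \cM^+(D^2w)+C|\na w|,
$$
where $\cM^\pm$ are the Pucci operators.

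Third, since the surfaces are only twice differentiable, I will use the viscosity strong maximum principle and the Hopf lemma for these inequalities (Caffarelli–Cabré). In case (a), $w\ge0$ and $w(0)=0$ force $w\equiv0$ near $p$. In case (b), the boundary Hopf lemma, applied on the half-ball with vanishing normal derivative (Serrin's corner version is not even needed here), also forces $w\equiv0$. By connectedness of $\Ga$ and a continuation argument, $\Sigma'_{\la_*}$ then coincides with the rest of $\Ga$, so $\Ga$ is symmetric with respect to $\pi_{\la_*}$. Since $e$ is arbitrary, $\Ga$ is symmetric about a hyperplane in every direction. All these hyperplanes pass through the centroid of $\Om_\Ga$, so $\Ga$ is invariant under all reflections through that point, hence it is a sphere.

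I expect the main obstacle to be the second step: justifying ellipticity when principal curvatures coincide, where the ordered eigenvalues fail to be differentiable. I will handle it by avoiding any linearization. Writing $\ka_j(D^2u)-\ka_j(D^2v)$ via Weyl's bounds in terms of the extreme eigenvalues of $D^2u-D^2v$ (up to lower-order gradient terms), and then applying the mean value theorem to $\Psi$ along the segment in $s$-space, yields the Pucci inequalities directly. The ordering constraint $s_1\ge\dots\ge s_{N-1}$ is preserved along that segment, so $\Psi$ is only evaluated on its domain.
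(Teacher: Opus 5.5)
Your proposal is correct and is essentially Aleksandrov's reflection (moving plane) argument from the cited source \cite{Ale}, which is all the paper relies on, since it quotes this theorem without proof. One step needs tightening, the ellipticity step. Let $X_u,X_v$ be the symmetric matrices, taken at a common gradient, whose ordered eigenvalues are the principal curvatures of the two graphs. Weyl's bounds $\lambda_{\min}(X_u-X_v)\le\kappa_j(X_u)-\kappa_j(X_v)\le\lambda_{\max}(X_u-X_v)$ by themselves give only a degenerate inequality, too weak for the strong maximum principle, so you must also use $\sum_j\bigl(\kappa_j(X_u)-\kappa_j(X_v)\bigr)=\tr(X_u-X_v)$ (or split $X_u-X_v$ into its positive and negative parts) to obtain genuine Pucci bounds with suitable constants.
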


\section*{Acknowledgements}
The authors are supported by  the Gruppo Nazionale per l'Analisi Matematica, la Probabilit\`a e Applicazioni
(GNAMPA) of the Istituto Nazionale di Alta Matematica (INdAM).
D.B. was also supported by MUR - M4C2 1.5 of PNRR with grant no. ECS00000036. M. M. was also supported by MIMIT - project "SMART E-DRIVING - SED'" n. F/340045/04/X59 - CUP: B89J24001370005.

\end{document}